\theoremstyle{plain}
\newtheorem{de}{Definition}[section]
\newtheorem{lem}[de]{Lemma}
\newtheorem{prop}[de]{Proposition}
\newtheorem{cor}[de]{Corollary}
\newtheorem{thm}[de]{Theorem}
\theoremstyle{definition}
\newtheorem{rem}[de]{Remark}
\numberwithin{equation}{section}
\newcommand{\one}{\mathbbm{1}}
\newcommand{\eul}{e}
\newcommand{\imu}{\mathrm{i}}
\newcommand{\dd}{\,\mathrm{d}}
\renewcommand{\Re}{\operatorname{Re}}
\newcommand{\supp}{\operatorname{supp}}
\newcommand{\R}{{\mathbb{R}}}
\newcommand{\N}{{\mathbb{N}}}
\newcommand{\C}{{\mathbb{C}}}
\newcommand{\Z}{{\mathbb{Z}}}
\newcommand{\PP}{{\mathbb{P}}}
\newcommand{\cL}{{\mathcal{L}}}
\newcommand{\cF}{{\mathcal{F}}}
\newcommand{\cA}{{\mathcal{A}}}
\newcommand{\cP}{{\mathcal{P}}}
\newcommand{\Schw}{\mathcal{S}}
\newcommand{\om}{\omega}
\newcommand{\si}{\sigma}
\newcommand{\ep}{\varepsilon}
\newcommand{\linwave}{v_{\mathrm{li}}}
\newcommand{\linschr}{u_{\mathrm{li}}}
\newcommand{\nlwave}{v_{\mathrm{nl}}}
\newcommand{\nlschr}{u_{\mathrm{nl}}}
\newcommand{\schrsp}{X_T^\si}
\newcommand{\wavesp}{Y_T^\si}
\newcommand{\btwt}[3]{\dot{B}^{#1}_{#2,#3,2}}
\newcommand{\bdyop}{\Omega_{\mathrm{b}}}
\newcommand{\resf}{\omega_{\mathrm{r}}}
\begin{document}
 \title[Randomized final-state problem for the Zakharov system in 3D]{Randomized final-state problem for the Zakharov system in dimension three}
 
 \author[M.~Spitz]{Martin Spitz}
 \address[M.~Spitz]{Fakult\"at f\"ur Mathematik, Universit\"at Bielefeld, Postfach 10 01 31, 33501
  Bielefeld, Germany}
\email{mspitz@math.uni-bielefeld.de}
\keywords{Zakharov system, final-state problem, almost sure scattering, randomized data}

 \begin{abstract}
 	We consider the final-state problem for the Zakharov system in the energy space in three space dimensions. For $(u_+, v_+) \in H^1 \times L^2$ without any size restriction, symmetry assumption or additional angular regularity, we perform a physical-space randomization on $u_+$ and an angular randomization on $v_+$ yielding random final states $(u_+^\omega, v_+^\omega)$. We obtain that for almost every $\omega$, there is a unique solution of the Zakharov system scattering to the final state $(u_+^\omega, v_+^\omega)$. The key ingredient in the proof is the use of time-weighted norms and generalized Strichartz estimates which are accessible due to the randomization.
 \end{abstract}
 
 \maketitle 
 
 \section{Introduction and main results}
 
 	\subsection{The Zakharov system}
 	
 	The Zakharov system is a model in plasma physics to describe Langmuir waves in a non- or weakly magnetized plasma. Langmuir waves are rapid oscillations of the electric field in the plasma. The scalar version of the Zakharov system is given by
 	\begin{equation}
 		\label{eq:ZakharovSystem}
 		 \begin{aligned}
         &\imu \partial_t u + \Delta u = V u, \\
         &\frac{1}{\alpha^2}\partial_t^2 V -  \Delta V =  \Delta |u|^2.
        \end{aligned}
 	\end{equation}
 	 Here, $V \colon \R \times \R^3 \rightarrow \R$ denotes the fluctuation of the ion density, $u \colon \R \times \R^3 \rightarrow \C$ the complex envelope of the electric field, and the fixed constant $\alpha > 0$ the ion sound speed. We refer to~\cite{Z1972,SS1999, T2007, CEGT2007} for the physical background and the derivation of this system.
 	
 	For the purpose of this article it is more convenient to use the first order reformulation of the Zakharov system. Setting $v = V - \imu \alpha^{-1} |\nabla|^{-1} \partial_t V$, system~\eqref{eq:ZakharovSystem} is equivalent to
 	\begin{equation}
 	 \label{eq:ZakharovSystemFirstOrder}
        \begin{aligned}
         &\imu \partial_t u + \Delta u = \Re(v) u, \\
         &\imu \partial_t v + \alpha |\nabla| v = - \alpha |\nabla| |u|^2.
        \end{aligned}
 	\end{equation}
 	
 	The Zakharov system has been extensively studied in the literature. Local wellposedness was shown for example in~\cite{BC1996, GTV1997, BH2011}, see also~\cite{BHHT2009, BGHN2015, CHN2019} for other dimensions. We particularly note that in $d \geq 4$ the recent work~\cite{CHN2019} gave a complete answer to the question of local wellposedness by determining the optimal range for the parameters $(s,l) \in \R^2$ such that~\eqref{eq:ZakharovSystemFirstOrder} is locally wellposed for initial data in $H^s(\R^d) \times H^l(\R^d)$.
 	
 	The Zakharov system preserves the Schr{\"o}dinger mass $M(u)$ and the energy $E_Z(u,v)$ given by
 	\begin{align*}
 		M(u) = \int_{\R^3} \frac{1}{2} |u|^2 \dd x, \qquad E_Z(u,v) =  \int_{\R^3} \frac{1}{2}|\nabla u|^2 + \frac{1}{4} |v|^2 + \frac{1}{2}\Re(v) |u|^2 \dd x.
 	\end{align*}
 	The \emph{energy space} $H^1(\R^3) \times L^2(\R^3)$ is thus of particular interest. Concerning the long time behavior the energy is of limited use because of the indefinite term $\Re(v) |u|^2$. However, if the $H^1$-norm of the Schr{\"o}dinger component of the data is small, the energy gives a priori control over the energy norm of the solution. This was used in~\cite{BC1996} to conclude global wellposedness for data $(u_0, v_0)$ from the energy space if $\|u_0\|_{H^1}$ is sufficiently small.
 	
 	There is a close connection between the Zakharov system and the cubic focusing nonlinear Schr{\"o}dinger equation 
	\begin{equation}
	\label{eq:CubicFocusingSchr}
		\imu \partial_t u + \Delta u = -|u|^2 u
	\end{equation}	 	
 	since the latter arises as the subsonic limit ($\alpha \rightarrow \infty$) of the former, see~\cite{KPV1995, MN2008, OT1992, SW1986}. Let $Q$ denote the ground state of~\eqref{eq:CubicFocusingSchr}, i.e. the unique positive radial solution of
 	\begin{align*}
 		- \Delta Q + Q = Q^3,
 	\end{align*}
 	minimizing the action
 	\begin{align*}
 		J(\varphi) = \int_{\R^3} \frac{1}{2} |\nabla \varphi|^2 + \frac{1}{2} |\varphi|^2 - \frac{1}{4} |\varphi|^4 \dd x = E_S(\varphi) +  M(\varphi),
 	\end{align*}
 	where $E_S$ denotes the energy for the nonlinear Schr{\"o}dinger equation~\eqref{eq:CubicFocusingSchr}.
 	We refer to~\cite{HR2008} for more information on $Q$. This ground state gives rise to a radial standing wave solution $(u(t), v(t)) = (\eul^{\imu t} Q, - Q^2)$ of the Zakharov system~\eqref{eq:ZakharovSystemFirstOrder}. In fact, one can construct a whole family of radial standing waves of~\eqref{eq:ZakharovSystemFirstOrder} from $Q$, see~\cite{GNW2013}.
 	
 	Concerning the long-time behavior of the Zakharov system, the standing wave solution particularly implies that not every solution in the energy space will scatter and the ground state provides a natural threshold for scattering. Moreover, in~\cite{M1996} it was shown that radial solutions with negative energy blow up in finite or in infinite time. See also~\cite{GNW2013} for blow-up results.
 	
 	On the other hand, several positive results concerning the asymptotic behavior of solutions to the Zakharov system~\eqref{eq:ZakharovSystemFirstOrder} have been established. We first review the scattering problem, i.e. the question, for which initial data $(u_0, v_0)$ there are $(u_+, v_+)$ in the energy space such that
 	\begin{equation}
 	\label{eq:ScatteringForZakharov}
 		\| u(t) - \eul^{\imu t \Delta} u_+\|_{H^1} + \| v(t) - \eul^{\imu \alpha t |\nabla|} v_+ \|_{L^2} \longrightarrow 0
 	\end{equation}
 	as $t \rightarrow \infty$, where $(u,v)$ denotes the solution of~\eqref{eq:ZakharovSystemFirstOrder} with data $(u_0, v_0)$. In~\cite{GN2014} this question was answered positively for small radially symmetric data in the energy space. This result was then extended to radially symmetric data below the ground state in~\cite{GNW2013}. The assumption of radial symmetry was weakened in~\cite{GLNW2014, G2016}, where scattering for small data in the energy space with additional angular regularity was shown.
 	We further note that in~\cite{HPS2013} a scattering result for regular and spatially decaying data was developed.
 	
 	A counterpart to the scattering problem as described above is the \emph{final-state problem}. Given $(u_+, v_+) \in H^1(\R^3) \times L^2(\R^3)$ one asks if there is a (unique) solution $(u,v)$ of~\eqref{eq:ZakharovSystemFirstOrder} scattering to $(u_+, v_+)$, i.e. satisfying~\eqref{eq:ScatteringForZakharov}. This question was studied in~\cite{OT1993, S2004, GV2006}. In these works positive answers were given not in the energy space but for more regular data satisfying several additional conditions.
 	
 	In this article we study the final-state problem for~\eqref{eq:ZakharovSystemFirstOrder} in the energy space without imposing any conditions on the size, radial symmetry or angular regularity of the data but using randomization instead.
 	
 	Before we state our main results Theorem~\ref{thm:MainResult} and Corollary~\ref{cor:GlobalSolution} below, we introduce the randomization procedures we will employ.

 	\subsection{Randomization}
 	
 	Since the seminal works~\cite{B1994, B1996} and~\cite{BT2008I, BT2008II} there has been large interest in random dispersive partial differential equations. One line of research is to study the question if in a supercritical setting, after randomizing the initial data, one still obtains local wellposedness, global wellposedness, or scattering almost surely. We refer to~\cite{BOP2015, BOP2019, DLM2019, KMV2019, LM2014, DLM2020, B2020, B2019} and the references therein for exemplary results in this direction for the Schr{\"o}dinger and the wave equation.
 	
 	We note that there are several possibilities how to randomize the data. While on compact manifolds the data was randomized with respect to a basis of eigenfunctions of the differential operator in~\cite{BT2008I,BT2008II}, on the full space most of the references above apply a Wiener randomization. Here the data is randomized with respect to a unit-scale decomposition of frequency space.
 	
 	In the recent work~\cite{M2019}, the author introduced a novel randomization with respect to a unit-scale decomposition of physical space, see Subsection~\ref{sss:RandPhysSpace} below for details. This was used to improve upon the known deterministic results for the final-state problem for the mass-subcritical NLS in $L^2$ almost surely. Roughly speaking, the physical-space randomization gives access to the dispersive estimate for linear solutions of the Schr{\"o}dinger equation although the data only belongs to $L^2$. In~\cite{NY2019} the authors observed that this dispersive decay can be used to study the final-state problem in time-weighted spaces, improving on the results in~\cite{M2019}.
 	
 	Another randomization was recently introduced in~\cite{BK2019}. Here a randomization with respect to the angular variable (see Subsection~\ref{sss:RandAngVar} for details) was combined with a randomization in the radial variable and a Wiener randomization in frequency space. This randomization was then applied to a wave maps type nonlinear wave equation with supercritical data.
 	
 	In the following we apply the physical-space randomization to the Schr{\"o}dinger final data and the angular randomization to the wave final data in order to study the final-state problem for the Zakharov system. The details of these randomization procedures are provided in the next two subsections. We discuss the advantages of this choice of randomization after the statement of the main result.
 	
 		\subsubsection{Randomization in physical space}
 		\label{sss:RandPhysSpace}
 	We first introduce the physical-space randomization for the Schr{\"o}dinger final data, following~\cite{M2019}.
 	
 	Fix a non-negative $\phi \in C^\infty_c(\R^3)$ such that $\phi(x) = 1$ for $|x| \leq 1$ and $\phi(x) = 0$ for $|x| \geq 2$. We then obtain a smooth partition of unity $\{\psi_k \colon k \in \Z^3\}$ by setting
 	\begin{equation}
 	\label{eq:DefPartitionUnityPhysicalSpaceRandomization}
 		\psi_k(x) = \frac{\phi(x-k)}{\sum_{l \in \Z^3} \phi(x-l)}
 	\end{equation}
 	for all $k \in \Z^3$.
 	
 	Next we take a sequence $(X_k)_{k \in \Z^3}$ of independent, real-valued, mean-zero random variables on a probability space $(\Omega, \cA, \PP)$ and denote their distributions by $\mu_k$. We assume that there is a constant $c > 0$ such that
 	\begin{equation}
 		\label{eq:ConditionRandomVariables}
 		\Big| \int_{\R} \eul^{\gamma x} \dd \mu_k(x) \Big| \leq e^{c \gamma^2}
 	\end{equation}
 	for all $\gamma \in \R$ and $k \in \Z^3$. For instance, one can take a sequence of independent, mean-zero Gaussian random variables with uniformly bounded variance. Another example is the case where the $\mu_k$ are compactly supported.
 	
 	For any $f \in L^2(\R^3)$ we then define the physical-space randomization $f^\om$ of $f$ by
 	\begin{equation}
 		\label{eq:DefPhysicalSpaceRandomization}
 			f^\om(x) = \sum_{k \in \Z^3} X_k(\om) \psi_k(x) f(x),
 	\end{equation}
 	which is understood as a limit in $L^2(\Omega, L^2(\R^3))$.
 	
 	\subsubsection{Randomization in the angular variable}
 	\label{sss:RandAngVar}
 	We next present the randomization in the angular variable closely following~\cite{BK2019}, where this randomization was introduced.
 	We start by recalling that the eigenfunctions of the Laplacian on the sphere are the spherical harmonics of degree $k$, i.e. the restrictions to $S^2$ of the homogeneous harmonic polynomials of degree $k$. 
 	We denote the space of these functions by $E_k$. The dimension of $E_k$ is given by
 	\begin{align*}
 		N_k = \binom{k+2}{2} - \binom{k}{2} = 2k + 1.
 	\end{align*}
 	We fix an orthonormal frame
 	\begin{align*}
 		\{ b_{k,l} \in L^2(S^2) \colon l = 1, \ldots, N_k, \, k \in \N_0\}
 	\end{align*}
 	of $L^2(S^2)$, consisting of eigenfunctions of $\Delta_{S^2}$, with the property that there is a constant $C > 0$ such that
 	\begin{equation}
 		\label{eq:LqBoundGoodFrame}
 		\|b_{k,l}\|_{L^q(S^2)} \leq \begin{cases} 
 										C \sqrt{q} \quad &\text{if } q < \infty, \\
 										C \sqrt{\log k} &\text{if } q = \infty
 									\end{cases}
 	\end{equation}
 	for all $l \in \{1, \ldots, N_k\}$, $k \in \N$, and $q \in [2,\infty]$. The existence of such a frame follows from Th{\'e}or{\`e}me~6 and~Proposition~3.2 in \cite{BL2013}, see also~\cite[Theorem~1.1]{BK2019} and~\cite{BL2014}. Following~\cite{BK2019}, we call a frame $\{b_{k,l} \colon l=1,\ldots, N_k, \, k \in \N_0\}$ as above a \emph{good frame}.
 	
 	Next take a function $f \in L^2(\R^3)$. We first rescale the Littlewood-Paley blocks to frequency $1$ setting 
 	\begin{align*}
 		g_m = (P_m f)(2^{-m} \cdot)
 	\end{align*}
 	for every $m \in \Z$. After passage to polar coordinates we expand the Fourier transform of $g_m$ in terms of the good frame, which yields
 	\begin{equation}
 		\label{eq:gmhatInGoodFrame}
 		\hat{g}_m(\rho \theta) = \sum_{k = 0}^\infty \sum_{l = 1}^{N_k} \hat{c}^m_{k,l}(\rho) b_{k,l}(\theta)
 	\end{equation}
 	for every $m \in \Z$. Theorem~3.10 in~\cite{SW1971} thus gives the representation
 	\begin{equation}
 		\label{eq:gmInGoodFrame}
 		g_m(r \theta) = \sum_{k = 0}^\infty \sum_{l = 1}^{N_k} a_k  r^{-\frac{1}{2}} b_{k,l}(\theta) \int_0^\infty \hat{c}^m_{k,l}(\rho) J_{\frac{2k+1}{2}}(r\rho) \rho^{\frac{3}{2}} \dd \rho  
 	\end{equation}
 	of $g_m$ in the good frame, where $a_k = (2 \pi)^{\frac{3}{2}} \imu^k$ and the Bessel function $J_\mu$ is defined as
 	\begin{align*}
 		J_\mu(t) = \frac{(\frac{t}{2})^\mu}{\Gamma(\frac{2\mu+1}{2}) \Gamma(\frac{1}{2})}\int_{-1}^1 \eul^{\imu t s} (1-s^2)^{\frac{2\mu-1}{2}} \dd s
 	\end{align*}
 	for all $t > 0$ and $\mu > -\frac{1}{2}$. Note that by~\eqref{eq:gmhatInGoodFrame} and Plancherel's theorem we also have
 	\begin{equation}
 		\label{eq:L2Normgm}
 		\|g_m\|_{L^2(\R^3)}^2 \sim \sum_{k = 0}^\infty \sum_{l = 1}^{N_k} \| \hat{c}^m_{k,l} \|_{L^2(r^2 \dd r)}^2.
 	\end{equation}
 	For every $m \in \Z$ we now take a sequence of independent, real-valued, mean-zero random variables $(Y^m_{k,l})_{l \in \{1, \ldots, N_k\}, k \in \N_0}$ on a probability space $(\Omega,\cA,\PP)$ with the property that there is a constant $c > 0$ such that
 	\begin{align*}
 		\Big| \int_{\R} e^{\gamma x} \dd \mu^{m}_{k,l}(x) \Big| \leq e^{c \gamma^2}
 	\end{align*}
 	for all $\gamma \in \R$, $l \in \{1, \ldots, N_k\}$, and $k \in \N_0$, where $\mu^m_{k,l}$ denotes the distribution of $Y^m_{k,l}$. See Subsection~\ref{sss:RandPhysSpace} for examples of such random variables.
 	
 	In view of~\eqref{eq:gmInGoodFrame} we set
 	\begin{equation}
 		\label{eq:Randomizationgm}
 		g_m^{\omega}(r \theta) = \sum_{k = 0}^\infty \sum_{l = 1}^{N_k} a_k r^{-\frac{1}{2}} Y^m_{k,l}(\omega) b_{k,l}(\theta) \int_0^\infty \hat{c}^m_{k,l}(\rho) J_{\frac{2k+1}{2}}(r \rho) \rho^{\frac{3}{2}} \dd \rho  
 	\end{equation}
 	for every $m \in \Z$, where the right-hand side is understood as the limit in $L^2(\Omega,L^2(\R^3))$. Next we rescale the functions $g_m^\omega$ to frequency $2^m$ setting
 	\begin{align*}
 		f_m^\omega = g_m^\omega(2^m \cdot)
 	\end{align*}
 	for all $m \in \Z$.
 	Finally, we define the \emph{angular randomization} $f^\omega$ of $f$ as
 	\begin{equation}
 		\label{eq:DefAngularRandomization}
 		f^\omega = \sum_{m \in \Z} f_m^\omega,
 	\end{equation}
 	which is again understood as the limit in $L^2(\Omega,L^2(\R^3))$.

 	\subsection{Main results}
 	The main result of this article states that after applying a suitable randomization to any data from the energy space, the final-state problem for the Zakharov system has almost surely a unique solution. We refer to Subsection~\ref{ss:NotationFunctionSpaces} for the definition of the function spaces appearing in the theorem.	 	
 	
 	\begin{thm}
 		\label{thm:MainResult}
 		Let $0 < \nu \ll 1$, $u_+ \in H^1(\R^3)$ and $v_+ \in L^2(\R^3)$. Let $u_+^\omega$ denote the physical-space randomization of $u_+$ and $v_+^{\omega}$ the angular randomization of $v_+$.
 		
 		Then for almost all $\omega \in \Omega$ there exists a time $T \geq 1$ and a unique solution $(u,v) \in C([T,\infty),H^1(\R^3)) \times C([T,\infty),L^2(\R^3))$ such that
		\begin{equation} 		
		\label{eq:UniquenessCondition}
 		\begin{aligned}
 			&\| t^{\frac{1}{2}-\nu}\langle \nabla \rangle (u(t) - \eul^{\imu t \Delta} u_+^\omega)\|_{L^\infty_t L^2 \cap L^2_t \dot{B}^0_{6,2} \cap L^2_t \dot{B}^{\frac{1}{4}+\ep}_{(q(\ep),\frac{2}{1-\nu}),2} }  < \infty, \\
 			 &\| t^{\frac{1}{2}-\nu} (v(t) - \eul^{\imu \alpha t |\nabla|} v_+^\omega)\|_{L^\infty_t L^2} < \infty.
 		\end{aligned}
 		\end{equation}
 		In particular, the solution satisfies
 		\begin{align*}
 			\|u(t) - \eul^{\imu t \Delta} u_+^\omega\|_{H^1} + \|v(t) - \eul^{\imu \alpha t |\nabla|} v_+^\omega\|_{L^2} \longrightarrow 0
 		\end{align*}
 		as $t \rightarrow \infty$, i.e. the solution $(u,v)$ scatters in the energy space with final state $(u_+^\omega, v_+^\omega)$.
 	\end{thm}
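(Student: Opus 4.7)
The plan is to set up a contraction-mapping argument in the time-weighted spaces $\schrsp \times \wavesp$ suggested by condition~\eqref{eq:UniquenessCondition}. Writing $\linschr(t) = \eul^{\imu t \Delta} u_+^\om$ and $\linwave(t) = \eul^{\imu \alpha t |\nabla|} v_+^\om$ for the randomized free evolutions and $\nlschr = u - \linschr$, $\nlwave = v - \linwave$ for the nonlinear corrections, Duhamel's formula at $+\infty$ turns the final-state problem into the fixed-point equation
\begin{align*}
 \nlschr(t) &= -\imu \int_t^\infty \eul^{\imu (t-s) \Delta} \bigl( \Re(\linwave + \nlwave) \, (\linschr + \nlschr) \bigr)(s) \dd s, \\
 \nlwave(t) &= \imu \alpha \int_t^\infty \eul^{\imu (t-s) \alpha |\nabla|} |\nabla| \, \bigl| \linschr + \nlschr \bigr|^2(s) \dd s.
\end{align*}
I would seek $(\nlschr, \nlwave)$ as a fixed point on a ball of small radius $\sigma$ in $\schrsp \times \wavesp$ for a time $T = T(\om) \geq 1$ chosen sufficiently large.

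The first key ingredient is a collection of probabilistic linear estimates for $\linschr$ and $\linwave$. For the physical-space randomization of $u_+ \in H^1$, a Khintchine-type argument, justified by the moment bound~\eqref{eq:ConditionRandomVariables} and combined with the $L^1 \to L^\infty$ dispersive estimate for the Schr{\"o}dinger group together with standard $TT^*$ manipulations, should yield almost surely generalized Strichartz bounds for $\linschr$ in non-admissible mixed-norm Besov spaces (in particular those in the definition of $\schrsp$) which are not accessible deterministically from $H^1$ data. For the angular randomization of $v_+ \in L^2$, an analogous Khintchine argument combined with the angular-regularity-based Strichartz inequalities for the half-wave equation used in~\cite{GLNW2014, G2016, BK2019} should produce almost surely improved mixed-norm bounds for $\linwave$, effectively as if $v_+$ carried extra angular regularity. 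A large-deviations step then turns these into uniform a priori bounds on $\linschr, \linwave$ valid outside an exceptional set of arbitrarily small probability.

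With the randomized linear bounds in hand, I would establish the required multilinear estimates for each piece of the Duhamel integrals, splitting the products into the four interaction types $\linwave \cdot \linschr$, $\linwave \cdot \nlschr$, $\nlwave \cdot \linschr$, $\nlwave \cdot \nlschr$ (and analogously for the $|u|^2$ nonlinearity of the wave equation). The three-fold intersection defining $\schrsp$ is tailored so that the $L^2_t \dot B^0_{6,2}$ endpoint Strichartz norm handles cubic Schr{\"o}dinger-type interactions, the non-admissible norm $L^2_t \btra{\frac{1}{4}+\ep}{q(\ep)}{\frac{2}{1-\nu}}$ absorbs the $|\nabla|$ derivative in the wave nonlinearity via a high-low/resonance decomposition in the Bejenaru--Herr spirit, and the $L^\infty_t L^2$ component transfers the bound back to $H^1$-scattering at the end. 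The time weight $t^{\frac{1}{2}-\nu}$ is tuned so that, after pairing the improved decay of the randomized free evolutions against the weighted norms of $(\nlschr, \nlwave)$, the resulting $s$-integrals $\int_t^\infty s^{-1-\delta} \dd s$ converge and can be made arbitrarily small by enlarging $T$. Analogous difference estimates then provide the contraction property, and Banach's fixed-point theorem yields a unique $(\nlschr, \nlwave)$.

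\textbf{Main obstacle.} The critical term is the bilinear Schr{\"o}dinger Duhamel integral $\int_t^\infty \eul^{\imu (t-s) \Delta} \Re(\linwave) \linschr \dd s$, in which \emph{both} factors are free evolutions of data only at the regularity of the energy space: this is precisely the interaction that in the deterministic setting forces a smallness, radiality, or angular-regularity hypothesis, and causes most known energy-space scattering results to fail in its absence. The two randomizations are designed to circumvent it in tandem: the physical-space randomization of $u_+$ supplies the missing dispersive $L^p$-decay for $\linschr$, the angular randomization of $v_+$ acts as a surrogate for angular regularity of $\linwave$, and the weight $t^{\frac{1}{2}-\nu}$ then upgrades the pointwise-in-time decay to time-integrable bounds that close the fixed-point argument. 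Once $(\nlschr, \nlwave) \in \schrsp \times \wavesp$ has been constructed, sending $t \to \infty$ in the $L^\infty_t L^2$ components of the defining norms immediately yields $\|u(t) - \linschr(t)\|_{H^1} + \|v(t) - \linwave(t)\|_{L^2} \to 0$, which is the advertised scattering statement.
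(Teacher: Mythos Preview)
Your overall strategy---randomized linear estimates plus a time-weighted fixed point for $(\nlschr,\nlwave)$---matches the paper, but there is a genuine gap: you run the contraction on the \emph{raw} Duhamel formula, whereas the paper first applies the normal form transform from~\cite{GN2014} (see Subsection~\ref{ss:NormalForm} and~\eqref{eq:ZakharovSystemNormalForm}). In the Schr{\"o}dinger nonlinearity the $XL$ interaction $(v u)_{XL}$, where $v$ is at high frequency and $u$ at much lower frequency, carries a full derivative loss at energy regularity that none of your norms or time weights removes; the paper integrates this piece by parts in time against the nonvanishing resonance function $\resf$, which is why the fixed-point problem actually solved is~\eqref{eq:ZakharovSystemNormalFormnl}, with the extra boundary operator $\bdyop$ and the cubic terms of Lemmas~\ref{lem:EstimateCubicTermsuuu}--\ref{lem:EstimateCubicTermsVVu}. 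Without this step your multilinear estimates would not close.

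You have also misidentified the critical term and, with it, the mechanism by which the two randomizations interact. The paper singles out $\linwave \, \nlschr$---not $\linwave\,\linschr$---as the hardest quadratic piece: $\nlschr$ carries only the time weight $t^{-\sigma}$, so all the extra decay must come from $\linwave$, yet the best dispersive rate $t^{-1}$ of the wave is not integrable and is \emph{not} improved by randomization. The angular randomization is used not merely as ``surrogate angular regularity'' but to place $\linwave$ in a space with arbitrarily high integrability in the spherical variable (Proposition~\ref{prop:RandomizationImprovedEstimatesHalfWave}); this is paired via H{\"o}lder on $S^2$ with the spherically averaged Strichartz norm $L^2_\sigma \dot{B}^{\frac14+\ep}_{(q(\ep),\frac{2}{1-\nu}),2}$ of $\nlschr$, which is available deterministically for an extended range of exponents (Proposition~\ref{prop:TimeWeightedStrichartzEstimatesSchroedinger}). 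That is the true purpose of the exotic third component of $\schrsp$; it is not there to absorb the $|\nabla|$ in the wave nonlinearity, which is handled by ordinary paraproduct estimates in Lemma~\ref{lem:BilinearEstimateWave}.
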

 	
 	We emphasize that no size restriction and no radial symmetry or angular regularity assumption is imposed on $(u_+, v_+) \in H^1(\R^3) \times L^2(\R^3)$ in the above theorem.
 	
	\begin{rem}
		In the energy space there seems to be a loss of derivatives in the nonlinearity of the Schr{\"o}dinger part of the Zakharov system~\eqref{eq:ZakharovSystemFirstOrder}. We overcome this problem by employing the normal form transform from~\cite{GN2014}, see Subsection~\ref{ss:NormalForm} below. By a solution of~\eqref{eq:ZakharovSystemFirstOrder} we thus mean a solution of the integral equation~\eqref{eq:ZakharovSystemNormalForm}  arising from the normal form transform. We remark for the following discussion that the nonlinearity in~\eqref{eq:ZakharovSystemNormalForm} also contains boundary and cubic terms.
	\end{rem} 	
 	
 	The main difficulty in proving scattering results respectively global estimates for the Zakharov system is the weak decay of the wave component. In fact, consider the Schr{\"o}dinger part of the Zakharov system as a Schr{\"o}dinger equation with potential $v$ for a moment. As $v$ is the solution of a wave equation, the best possible decay of the potential in dimension three is $\|v(t)\|_{L^\infty} \sim t^{-1}$, which is just not integrable. This suggests that decay estimates alone are insufficient to solve the final-state problem.
 	
 	We now discuss the main ideas of the proof of Theorem~\ref{thm:MainResult}. We write $u(t) = \linschr^\omega(t) + \nlschr(t)$ and $v(t) = \linwave^\omega(t) + \nlwave(t)$, where $\linschr^\omega(t) = \eul^{\imu t \Delta} u_+^\omega$ and $\linwave^\omega(t) = \eul^{\imu \alpha t |\nabla|} v_+^\omega$. The physical-space randomization of the Schr{\"o}dinger data gives access to the dispersive estimate, which we want to use to control $\nlschr$ in time-weighted spaces as in~\cite{NY2019}. The additional decay then transfers to $\nlwave$ by the coupling of the system. The idea is that in the nonlinear terms we gain decay through the time weights which can then be used to close the estimates. This strategy works well for most of the nonlinear terms we have to deal with after the normal form transform. It also identifies the quadratic term $\linwave^\omega \nlschr$ as the most difficult one as we do not gain a time weight here and all the decay has to come from $\linwave^\omega$. We point out that the best possible decay of $t^{-1}$ is not improved by randomization. In particular, it is not apparent how a physical-space randomization of the wave data, which might seem natural at first sight, can be used to control the $\linwave^\omega \nlschr$ nonlinearity.
 	
 	At this point the angular randomization shows its benefit. We make the novel observation that randomizing with respect to a good frame does not only give an extended range of exponents for the Strichartz estimate but also arbitrarily high integrability in the spherical coordinate. In order to estimate $\linwave^\omega \nlschr$ it is thus sufficient to control $\nlschr$ in a spherically averaged norm. However, in spherically averaged spaces (deterministic) Strichartz estimates for the Schr{\"o}dinger equation hold for an extended range of exponents. This allows us to close the estimates.
 	
 	Since the other part $\nlwave \nlschr$ of the wave-Schr{\"o}dinger quadratic nonlinearity can be controlled by means of the additional time weight, it really is the interplay of the physical-space and the angular randomization which allows us to prove Theorem~\ref{thm:MainResult}.
 	
 	Finally, we can exploit energy conservation to extend the solution of~\eqref{eq:ZakharovSystemFirstOrder} found in Theorem~\ref{thm:MainResult} to $[0,\infty)$ for randomized data below the ground state $Q$. The extension relies on the local wellposedness theory for the Zakharov system from Proposition~3.1 in~\cite{GTV1997} and the unconditional uniqueness result from~\cite{MN2009}. 
 	
 	\begin{cor}
 		\label{cor:GlobalSolution}
 		Let $0 < \nu \ll 1$, $u_+ \in H^1(\R^3)$ and $v_+ \in L^2(\R^3)$. Let $u_+^\omega$ denote the physical-space randomization of $u_+$ and $v_+^{\omega}$ the angular randomization of $v_+$.
 		
 		Then there is a measurable set $\tilde{\Omega} \subseteq \Omega$ with $\PP(\tilde{\Omega}) = 1$ such that for all $\omega \in \tilde{\Omega}$ satisfying
 		\begin{align*}
 			(2\|\nabla u_+^\omega\|_{L^2}^2 + \|v_+^\omega\|_{L^2}^2)\|u_+^\omega\|_{L^2}^2< 8 E_S(Q) M(Q)
 		\end{align*}
 		there is a solution  $(u,v) \in C([0,\infty),H^1(\R^3)) \times C([0,\infty),L^2(\R^3))$ satisfying
 		\begin{align*}
 			\|u(t) - \eul^{\imu t \Delta} u_+^\omega \|_{H^1} + \|v(t) - \eul^{\imu \alpha t |\nabla|} v_+^\omega \|_{L^2} \longrightarrow 0
 		\end{align*}
 		as $t \rightarrow \infty$. It is unique among those solutions in $C([0,\infty),H^1(\R^3)\times L^2(\R^3))$ which satisfy~\eqref{eq:UniquenessCondition} on $[T,\infty)$ for some $T \geq 1$.
 	\end{cor}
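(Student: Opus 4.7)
The plan is to combine Theorem~\ref{thm:MainResult} with the conservation laws for the Zakharov flow, the subcritical local wellposedness theory of~\cite[Proposition~3.1]{GTV1997}, and the unconditional uniqueness result of~\cite{MN2009}. Let $\tilde{\Omega}$ denote the full-measure event on which the conclusion of Theorem~\ref{thm:MainResult} holds, and fix $\om \in \tilde{\Omega}$ satisfying the sub-threshold hypothesis. The theorem supplies a time $T \geq 1$ and a solution $(u,v) \in C([T,\infty),H^1(\R^3)) \times C([T,\infty),L^2(\R^3))$ scattering to $(u_+^\om, v_+^\om)$. The only task left is to extend $(u,v)$ backward to $[0,T]$ in the same class, after which uniqueness will follow from~\cite{MN2009} on $[0,T]$ and from Theorem~\ref{thm:MainResult} on $[T,\infty)$.

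To carry out the backward extension I would invoke~\cite[Proposition~3.1]{GTV1997}, whose lifespan is bounded below in terms of $\|(u(t), v(t))\|_{H^1 \times L^2}$. Iterating it backward reduces everything to a uniform a priori bound on this norm, which I would extract from the conservation of the Schr{\"o}dinger mass $M(u)$ and of the Zakharov energy $E_Z(u,v)$. In order to evaluate the conserved energy I first show that the indefinite interaction term vanishes in the limit,
\begin{equation*}
\int_{\R^3} \Re(v(t))|u(t)|^2 \dd x \longrightarrow 0 \quad\text{as}\quad t \to \infty,
\end{equation*}
so that by conservation
\begin{equation*}
E_Z(u(t),v(t)) = \tfrac{1}{2}\|\nabla u_+^\om\|_{L^2}^2 + \tfrac{1}{4}\|v_+^\om\|_{L^2}^2, \quad \|u(t)\|_{L^2} = \|u_+^\om\|_{L^2}
\end{equation*}
on the full interval of existence.

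The sub-threshold hypothesis now enters through the sharp Gagliardo-Nirenberg inequality attached to $Q$, bounding $\bigl|\int \Re(v)|u|^2 \dd x\bigr|$ by $C_Q \|v\|_{L^2}\|u\|_{L^2}^{1/2}\|\nabla u\|_{L^2}^{3/2}$ with an optimal constant $C_Q$ determined by $Q$. Inserting this into the preceding identity and using mass conservation produces an algebraic inequality in $\|\nabla u(t)\|_{L^2}$ and $\|v(t)\|_{L^2}$; the numerical assumption $(2\|\nabla u_+^\om\|_{L^2}^2 + \|v_+^\om\|_{L^2}^2)\|u_+^\om\|_{L^2}^2 < 8 E_S(Q) M(Q)$ is precisely the one that lets a standard continuity/bootstrap argument in the spirit of~\cite{GN2014, GNW2013} convert it into a uniform bound on $2\|\nabla u(t)\|_{L^2}^2 + \|v(t)\|_{L^2}^2$. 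Iterating~\cite[Proposition~3.1]{GTV1997} then yields a solution on $[0,T]$, which glues with the one on $[T,\infty)$ to form the desired global solution.

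The main obstacle, and the only piece requiring genuinely new work beyond Theorem~\ref{thm:MainResult}, is establishing the displayed vanishing of the interaction term. I would split $u = \linschr^\om + \nlschr$ and $v = \linwave^\om + \nlwave$ and treat the four resulting trilinear contributions separately: the three terms containing $\nlschr$ or $\nlwave$ are absorbed by the time-weighted norms in~\eqref{eq:UniquenessCondition}, which force those remainders to vanish as $t \to \infty$, while the fully linear contribution $\int \Re(\linwave^\om)|\linschr^\om|^2 \dd x$ is handled by combining the almost-sure dispersive decay of $\linschr^\om$ afforded by the physical-space randomization with the uniform $L^2$ bound on $\linwave^\om$. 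Once this input is in place, the remainder of the argument is a routine repackaging of the subsonic Zakharov sub-ground-state analysis.
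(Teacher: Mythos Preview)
Your proposal is correct and follows the same strategy as the paper: invoke Theorem~\ref{thm:MainResult}, extend backward via~\cite[Proposition~3.1]{GTV1997}, compute the conserved mass and energy from the scattering limit, feed the sub-threshold hypothesis into the variational analysis of~\cite{GNW2013} to obtain a uniform $H^1 \times L^2$ bound, and conclude uniqueness via~\cite{MN2009}. The paper's execution is slightly more streamlined in two places---it obtains the vanishing of the interaction term simply by noting that $\|\eul^{\imu t \Delta} u_+^\omega\|_{L^4} \to 0$ (from the randomization estimates via Lemma~\ref{lem:SobolevSchroedingerWave} and interpolation) together with the $H^1$ scattering, avoiding your four-way splitting, and it offloads the bootstrap to Lemma~2.2 and Corollary~2.3 of~\cite{GNW2013} (after checking $\lim_{t\to\infty} K(u(t)) = \|\nabla u_+^\omega\|_{L^2}^2$ to land in the correct branch of the dichotomy) rather than redoing the Gagliardo--Nirenberg continuity argument by hand.
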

 	
 	In Remark~\ref{rem:MeasureGlobal} we note that in the case of small Schr{\"o}dinger data we can quantify the measure of the set of $\omega$ such that the nonlinear solution scattering to the final state $(u_+^\omega, v_+^\omega)$ is global.
 	
 	The rest of the paper is organized as follows. In Section~\ref{sec:NormalFormPrelim} we introduce the function spaces we are using, discuss the normal form transform, and provide time-weighted Strichartz estimates. In Section~\ref{sec:LinearRandomizedEstimates} we show how the randomization procedures presented above lead to improved space-time estimates. Section~\ref{sec:MultilinearEstimates} contains the multilinear estimates which are needed to prove the main results in Section~\ref{sec:MainResults}.
 
 \section{Normal form and other preliminaries}
 \label{sec:NormalFormPrelim}
	In this section we fix some notation, in particular concerning the function spaces we use, review the normal form from~\cite{GN2014}, and provide time-weighted Strichartz estimates which we will need in the following.

 	\subsection{Notation and function spaces}
		\label{ss:NotationFunctionSpaces}
		
    Fix an even function $\eta_0 \in C_c^\infty(\R)$ such that $0 \leq \eta_0 \leq 1$, $\eta_0 = 1$ on the ball $B_{\frac{5}{4}}(0)$, and the support of $\eta_0$ is contained in $B_{\frac{8}{5}}(0)$. For every number $k \in \Z$ we define the functions $\rho_k(\xi) = \eta_0(|\xi|/2^k) - \eta_0(|\xi|/2^{k-1})$ and $\rho_{\leq k}(\xi) = \eta_0(|\xi|/2^k)$ on $\R^3$ as well as the corresponding Fourier multipliers 
    \begin{align*}
     P_k f = \cF^{-1}(\rho_k \hat{f}), \qquad P_{\leq k} f = \cF^{-1} (\rho_{\leq k} \hat{f}), \qquad P_{\geq k}f = \cF^{-1}(\hat{f} - \rho_{\leq k-1} \hat{f}),
    \end{align*}
    where $\hat{f} = \cF f$ denotes the Fourier transform of $f$. We further set $\tilde{P}_k = \sum_{|j - k| \leq 4} P_j$.
        
    In order to distinguish between different frequency interactions, we define the paraproduct-type operators
    \begin{align}
    \label{eq:DefParaproduct}
     &(f g)_{LH} = \sum_{k \in \Z} P_{\leq k - 5} f \, P_k g, \hspace{1.5em} (f g)_{HL} = (g f)_{LH}, \hspace{1.5em} (f g)_{HH} =  \sum_{\substack{|k_1 - k_2| \leq 4, \\ k_1, k_2 \in \Z}} P_{k_1} f \, P_{k_2} g
    \end{align}
    for any pair of functions $f$ and $g$. Anticipating the nonresonant interactions of the Zakharov system, we also set
    \begin{align}
    \label{eq:DefXLalphaL}
     &(f g)_{\alpha L} = \sum_{\substack{|k - \log_2 \alpha| \leq 4, \\ k \in \Z}} P_k f \, P_{\leq k - 5} g, \qquad (f g)_{XL} = \sum_{\substack{|k - \log_2 \alpha| > 4, \\ k \in \Z}} P_k f \, P_{\leq k - 5} g.
    \end{align}
    Note that $(f g)_{HL} = (f g)_{\alpha L} + (f g)_{X L}$. The sum of indices denotes the sum of the corresponding operators, e.g. $(f g)_{\alpha L + XL} = (f g)_{\alpha L} + (f g)_{X L}$. For later use we also introduce the abbreviation $(f g)_R = (f g)_{LH + HH + \alpha L}$.
    
    We further denote the symbol of the bilinear operator with index $\ast$ by $\mathcal{P}_{\ast}$, i.e.,
    \begin{equation*}
     \cF (f g)_\ast(\xi) = \int_{\R^3} \mathcal{P}_\ast(\xi - \eta, \eta) \hat{f}(\xi - \eta) \hat{g}(\eta) \dd \eta,
    \end{equation*}
    where $\ast \in \{LH, HH, \alpha L, XL\}$.
 	
	We next introduce the function spaces we will use. For the time domain, we first set $I_T = [T,\infty)$ for any $T > 0$. Let $p,q,s \in [1,\infty]$.
	
	Set $\cL^q(0,\infty) =  L^q((0,\infty), r^2 \dd r)$. We then define the space $\cL^q L^s(\R^3)$, anisotropic in the radial and the angular variable, by the norm
	\begin{align*}
		\|f\|_{\cL^q_r L^s_\theta} = \Big(\int_0^\infty \Big(\int_{S^2} |f(r \theta)|^s \dd \theta\Big)^{\frac{q}{s}} r^2 \dd r\Big)^{\frac{1}{q}}
	\end{align*}
	with the usual adaptions if $q$ or $s$ equals $\infty$.
	
	If $Z$ is a function space over $\R^3$ with norm $\|\cdot\|_{Z}$, we define the time-weighted spaces $L^q_\sigma(I_T,Z)$ by the norm
	\begin{align*}
		\|g\|_{L^q_\sigma Z} = \| t^\sigma g(t,x) \|_{L^q_t Z_x} = \|t^\sigma g(t,x)\|_{L^q Z(I_T \times \R^3)}
	\end{align*}		
	for any $\sigma \geq 0$. If we want to point out the underlying time interval, we also write $\|\cdot\|_{L^q_\sigma Z(I_T)}$. Note that if $\sigma = 0$ this is the standard $L^q(I_T,Z)$-space. In this case we simply write $\| \cdot \|_{L^q Z}$ or $\| \cdot \|_{L^q_t Z}$ if there is no chance of confusion with the time weight.
	
	 We use the standard homogeneous Besov spaces $\dot{B}^{\mu}_{q,2}(\R^3)$ as well as the Besov-type spaces $\dot{B}^{\mu}_{(q,s),2}(\R^3)$ and $\dot{B}^{\mu}_{p,(q,s),2}(I \times \R^3)$ defined by the norms
	\begin{align*}
		\|f\|_{\dot{B}^{\mu}_{q,2}} &= \Big(\sum_{k \in \Z} 2^{2k\mu} \|P_k f\|_{L^q}^2 \Big)^{\frac{1}{2}}\\
		\|f\|_{\dot{B}^{\mu}_{(q,s),2}} &= \Big(\sum_{k \in \Z} 2^{2k\mu} \|P_k f\|_{\cL^q_r L^s_\theta}^2 \Big)^{\frac{1}{2}}\\
		\|g\|_{\dot{B}^{\mu}_{p,(q,s),2}} &= \Big(\sum_{k \in \Z} 2^{2k\mu} \|P_k g\|_{L^p_t\cL^q_r L^s_\theta}^2 \Big)^{\frac{1}{2}}\\
	\end{align*}
	for $\mu \in \R$, where we only work on the time intervals $I = I_T$ or $I = \R$. Again, if we want to point out the underlying time interval, we write $\| \cdot \|_{\dot{B}^{\mu}_{p,(q,s),2}(I)}$.
	In the case $q = s$ we also write $\dot{B}^\mu_{p,q,2}(I \times \R^3) = \dot{B}^\mu_{p,(q,q),2}(I \times \R^3)$.
	
	We now fix $0 < \ep \ll 1$ for the rest of the paper and set
	\begin{equation}
	\label{eq:Defqeps}
		\frac{1}{q(\ep)} = \frac{1}{4} + \frac{\ep}{3} \qquad \text{and} \qquad \frac{1}{q(-\ep)} = \frac{1}{4} - \frac{\ep}{3}.
	\end{equation}
	We further fix $0 < \nu \ll 1$ and set $\sigma = \frac{1}{2} - \nu$.
	For the construction of the solution of the Zakharov system we will employ the spaces
	\begin{align*}
		X_T^\sigma(I_T \times \R^3) &= \{\nlschr \in C(I_T, H^1(\R^3)) \colon \| \nlschr \|_{X_T^\sigma} < \infty\} \\
		Y_T^\sigma(I_T \times \R^3) &= \{\nlwave \in C(I_T, L^2(\R^3)) \colon \| \nlwave \|_{Y_T^\sigma} < \infty\}
	\end{align*}
	 equipped with the norms
	\begin{align*}
		\| \nlschr \|_{\schrsp} &= \|\nlschr\|_{L^\infty_\sigma H^1} + \|\langle \nabla \rangle \nlschr\|_{L^2_\sigma \dot{B}^0_{6,2}} + \|\langle \nabla \rangle \nlschr\|_{L^2_\sigma \dot{B}^{\frac{1}{4}+\ep}_{(q(\ep), \frac{2}{1-\nu}),2}}, \\
		\| \nlwave \|_{\wavesp} &= \| \nlwave \|_{L^\infty_\sigma L^2}.
	\end{align*}

 	\subsection{Normal form reduction for the Zakharov system}
 	\label{ss:NormalForm}
 	
	We first note that in our proofs the term $\overline{v} u$ can be treated in the same way as $v u$ so that we drop the real part in~\eqref{eq:ZakharovSystemFirstOrder} for convenience. 	
 	
	The Zakharov system~\eqref{eq:ZakharovSystemFirstOrder} with final data $(u_+, v_+)$ can be rewritten as
	\begin{equation}
		\label{eq:ZakharovSystemFinalState}
		\begin{aligned}
			u(t) &= \eul^{\imu t \Delta} u_+ + \imu \int_t^\infty \eul^{\imu (t-s) \Delta} (v u)(s) \dd s, \\
			v(t) &= \eul^{\imu \alpha t |\nabla|} v_+ - \imu \alpha \int_t^\infty \eul^{\imu \alpha (t-s) |\nabla|}(|\nabla| |u|^2)(s) \dd s.
		\end{aligned}
	\end{equation}	 	
 	
 	In~\cite{GN2014} the authors introduced a normal form transform for the Zakharov system. For the Schr{\"o}dinger part it relies on the observation that the resonance function 
 	\begin{align*}
 		\resf(\xi-\eta,\eta) = |\xi|^2 + \alpha |\xi - \eta| - |\eta|^2
 	\end{align*}
 	does not vanish on the support of $\cP_{XL}$ as we have $\alpha \nsim |\xi - \eta| \sim |\xi| \gg |\eta|$ for these $\xi$ and $\eta$. This implies that~\eqref{eq:ZakharovSystemFinalState} is equivalent - at least for sufficiently smooth solutions - to
	\begin{equation}
		\label{eq:ZakharovSystemNormalForm}
		\begin{aligned}
			u(t) &= \eul^{\imu t \Delta} u_+ - \bdyop(v,u)(t) + \imu \int_t^\infty \eul^{\imu (t-s) \Delta} (v u)_{LH+HH+\alpha L}(s) \dd s \\
			&\qquad - \imu \alpha \int_t^\infty \eul^{\imu (t-s) \Delta} \bdyop(|\nabla| |u|^2,u)(s) \dd s + \imu \int_t^\infty \eul^{\imu (t-s) \Delta} \bdyop(v, vu)(s) \dd s, \\
			v(t) &= \eul^{\imu \alpha t |\nabla|} v_+ - \imu \alpha \int_t^\infty \eul^{\imu \alpha (t-s) |\nabla|}(|\nabla| |u|^2)(s) \dd s,
		\end{aligned}
	\end{equation}	 	
	where the bilinear Fourier multiplier $\bdyop$ is defined as
	\begin{align*}
		\bdyop(f,g) = \cF^{-1} \int_{\R^3}\frac{\cP_{XL}(\xi-\eta,\eta)}{\resf(\xi-\eta,\eta)} \hat{f}(\xi - \eta) \hat{g}(\eta) \dd \eta.
	\end{align*}
	We refer to Section~2 in~\cite{GN2014} for the details. Note that the boundary term $\bdyop(v,u)(T)$ arising from integration by parts vanishes in $H^1(\R^3)$ as $T \rightarrow \infty$ for $(u,v)$ as in Theorem~\ref{thm:MainResult}, see~Lemma~\ref{lem:BilinearMultiplierEstimate} below. We further remark that in~\cite{GN2014} a normal form transform is also applied to the wave part of the Zakharov system. Moreover, refinements of~\eqref{eq:ZakharovSystemNormalForm} are possible by identifying further nonresonant regimes, see e.g.~\cite{GNW2013}. Both is not necessary for our purposes so that we work with the simpler form~\eqref{eq:ZakharovSystemNormalForm}.

 	\subsection{Time-weighted Strichartz estimates}
 	
 	Strichartz estimates are an indispensable tool in order to estimate the nonlinearities. Since we are working with time weights, we also need time-weighted Strichartz estimates. However, these variants follow from the unweighted estimates as was observed in~\cite{NY2019} for $L^p(I_T,L^q(\R^3))$.
 	We follow the argument from~\cite{NY2019} to prove the following.
	\begin{lem}
		\label{lem:TimeWeightedStrichartzAbstract}
		Let $T \geq 1$, $q, \tilde{q} \in [2,\infty]$, and $\si \geq 0$. Assume that $X$ and $Y$ are Banach spaces such that
		\begin{align*}
			\Big\| \int_t^\infty \eul^{\imu (t-s) \Delta} f(s) \dd s \Big\|_{L^q(I_{T'}, X)} \lesssim \|f\|_{L^{\tilde{q}'}(I_{T'},Y)}
		\end{align*}
		for all $f \in L^{\tilde{q}'}(I_{T}, Y)$ and $T' \in I_T$. We then have
		\begin{align*}
			\Big\| t^\si \int_t^\infty \eul^{\imu (t-s) \Delta} f(s) \dd s \Big\|_{L^q(I_T, X)} \lesssim \|t^\si f(t) \|_{L^{\tilde{q}'}(I_T,Y)}
		\end{align*}
		for all $f \in L^{\tilde{q}'}_\si(I_T, Y)$.
	\end{lem}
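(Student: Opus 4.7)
The plan is a dyadic-in-time decomposition paired with the observation that the weight $t^\si$ is essentially constant on each dyadic ring $I_n = [2^n T, 2^{n+1} T]$ for $n \in \N_0$. The case $\si = 0$ is just the hypothesis with $T' = T$, so I assume $\si > 0$. For $t \in I_n$, I split
\[
F(t) := \int_t^\infty \eul^{\imu (t-s) \Delta} f(s) \dd s = \sum_{m \geq n} F_m(t), \qquad F_m(t) := \int_{I_m \cap [t,\infty)} \eul^{\imu (t-s) \Delta} f(s) \dd s,
\]
so each $F_m$ depends only on $f\vert_{I_m}$ (for $m > n$ the intersection equals all of $I_m$, while for $m = n$ it is $[t, 2^{n+1}T]$).

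For fixed $m \geq n$ I apply the assumed estimate on the interval $I_{2^n T}$ to the truncated datum $f \one_{I_m}$, whose Duhamel integral agrees with $F_m$ on $I_{2^n T}$ and hence on $I_n \subset I_{2^n T}$:
\[
\|F_m\|_{L^q(I_n, X)} \leq \|F_m\|_{L^q(I_{2^n T}, X)} \lesssim \|f\|_{L^{\tilde q'}(I_m, Y)}.
\]
Since $t \sim 2^n T$ on $I_n$ and $(2^n T)^\si = 2^{-(m-n)\si}(2^m T)^\si$, with $(2^m T)^\si \|f\|_{L^{\tilde q'}(I_m, Y)} \sim \|t^\si f\|_{L^{\tilde q'}(I_m, Y)} =: a_m$, inserting the weight yields
\[
\|t^\si F_m\|_{L^q(I_n, X)} \lesssim 2^{-(m-n)\si} \, a_m.
\]

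Summing in $m \geq n$ and then taking $\ell^q$ in $n$ (the supremum if $q = \infty$), I use Young's inequality with the $\ell^1$ kernel $(2^{-k\si})_{k \geq 0}$, finite precisely because $\si > 0$, followed by the embedding $\ell^{\tilde q'} \hookrightarrow \ell^q$ valid since $\tilde q' \leq 2 \leq q$, to conclude
\[
\|t^\si F\|_{L^q(I_T, X)} \lesssim \Big(\sum_m a_m^{\tilde q'}\Big)^{1/\tilde q'} = \|t^\si f\|_{L^{\tilde q'}(I_T, Y)}.
\]

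The only non-bookkeeping point is recognizing why the hypothesis is phrased for every shifted interval $I_{T'}$ with $T' \geq T$: this uniformity is exactly what lets one apply the unweighted estimate on each $I_{2^n T}$ with an implicit constant independent of $n$, and without such uniformity the dyadic summation would not close. The requirement $\si > 0$ for the geometric decay in $m - n$ is the other essential ingredient, while the admissibility condition $\tilde q' \leq q$ comes for free from $q, \tilde q \geq 2$.
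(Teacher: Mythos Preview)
Your proof is correct and takes a genuinely different route from the paper. The paper follows Nakanishi--Yamamoto: it writes $t^{\sigma p} = T^{\sigma p} + \int_T^t \sigma p \, \tau^{\sigma p - 1} \dd \tau$, applies this identity with $p = q$ to split $\|t^\sigma F\|_{L^q(I_T,X)}$ into a base term plus an integral in an auxiliary variable $\tau$, invokes the unweighted hypothesis on each $I_\tau$, and then uses Minkowski (requiring $\tilde q' \leq q$) to swap the $\tau$- and $t$-integrals and reassemble $t^\sigma$ on the right. Your dyadic decomposition is more combinatorial: it trades the continuous $\tau$-integral for a discrete convolution with the geometric kernel $(2^{-k\sigma})_{k \geq 0}$ and replaces Minkowski by Young's inequality plus the embedding $\ell^{\tilde q'} \hookrightarrow \ell^q$. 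Both arguments hinge on the same structural fact --- that the hypothesis is uniform over all $I_{T'}$ with $T' \geq T$ --- and both exploit $\tilde q' \leq q$ at the analogous step. The paper's version is slightly slicker (no dyadic constants, no summation bookkeeping), while yours makes the role of $\sigma > 0$ more visible and would adapt easily to other weights with comparable growth on dyadic shells.
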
 	
	
	\begin{proof}
		In the case $\si = 0$ there is nothing to show so we assume $\si > 0$ in the following.
		
		 Take $p \in [1,\infty)$. We write $\one_{I_T}$ for the indicator function of $I_T$ and set $g(t)= (\si p)^{\frac{1}{p}} t^{\si - \frac{1}{p}}$ for $t \in I_T$. We then have
		\begin{equation}
			\label{eq:TimeWeightIntegral}
			t^{\si p} = T^{\si p} + \int_T^t g(\tau)^p \dd \tau = T^{\si p} + \| g(\tau) \one_{I_\tau}(t)\|_{L^p_\tau(I_T)}^p
		\end{equation}
		for all $t \in I_T$. Combining~\eqref{eq:TimeWeightIntegral} with Fubini's theorem, we obtain for any measurable function $h \colon I_T \rightarrow \R$
		\begin{align}
		\label{eq:TimeWeightFubini}
			\| t^\sigma h(t) \|_{L^p(I_T; \dd t)} &\leq \|T^\si h(t)\|_{L^p(I_T; \dd t)} + \|g(\tau) \one_{I_\tau}(t) h(t) \|_{L^p(I_T; \dd t) L^p(I_T; \dd \tau)} \nonumber\\
			&= T^{\si} \|h\|_{L^p(I_T)} + \|g(\tau) \|h(t)\|_{L^p(I_\tau; \dd t)} \|_{L^p(I_T; \dd \tau)}.
		\end{align}
		We now assume $q < \infty$ and employ~\eqref{eq:TimeWeightFubini} with $p = q$ and the assumption in order to infer
		\begin{align*}
			&\Big\| t^\si \int_t^\infty \eul^{\imu (t-s) \Delta} f(s) \dd s \Big\|_{L^q(I_T, X)} \leq T^\si \Big\|  \int_t^\infty \eul^{\imu (t-s) \Delta} f(s) \dd s \Big\|_{L^q(I_T, X)} \\
			&\hspace{10em}+ \Big\| g(\tau) \Big\| \int_t^\infty \eul^{\imu (t-s) \Delta} f(s) \dd s \Big\|_{L^q(I_\tau, X; \dd t)}\Big\|_{L^q(I_T; \dd \tau)} \\
			&\lesssim T^\si \|f\|_{L^{\tilde{q}'}(I_T,Y)} + \|g(\tau) \|f\|_{L^{\tilde{q}'}(I_\tau, Y)} \|_{L^q(I_T;\dd \tau)}.
		\end{align*}
		Using Minkowski's inequality and~\eqref{eq:TimeWeightIntegral}, we thus arrive at
		\begin{align*}
			 &\Big\| t^\si \int_t^\infty \eul^{\imu (t-s) \Delta} f(s) \dd s \Big\|_{L^q(I_T, X)}\lesssim T^\si \|f\|_{L^{\tilde{q}'}(I_T,Y)} + \|g(\tau) \|f\|_{L^{\tilde{q}'}(I_\tau, Y)} \|_{L^q(I_T;\dd \tau)}\\
			&\lesssim  \|t^\si f(t)\|_{L^{\tilde{q}'}(I_T,Y; \dd t)} + \| \|g(\tau) \one_{I_\tau}(t) \|f(t)\|_{Y} \|_{L^q(I_T;\dd \tau)} \|_{L^{\tilde{q}'}(I_T; \dd t)} \\
			&\lesssim  \|t^\si f(t)\|_{L^{\tilde{q}'}(I_T,Y; \dd t)}.
		\end{align*}
		In the case $q = \infty$, the assumption implies
		\begin{equation}
		\label{eq:TimeWeightqinfty}
			\Big\|  \int_t^\infty \eul^{\imu (t-s) \Delta} f(s) \dd s \Big\|_{X} \lesssim \|f\|_{L^{\tilde{q}'}(I_t,Y)} \lesssim t^{-\si} \Big(\int_t^\infty s^{\si \tilde{q}'} \|f(s)\|_{Y}^{\tilde{q}'} \dd s\Big)^{\frac{1}{\tilde{q}'}}
		\end{equation}
		for almost every $t \in I_T$ and the assertion follows.
	\end{proof}
 	
Choosing $X$ and $Y$ appropriately, we can extend known Strichartz respectively generalized Strichartz estimates to the time-weighted case. To that purpose we first recall that a pair $(q,r)$ is said to be \emph{Schr{\"o}dinger admissible} if
	\begin{align*}
		2 \leq q,r \leq \infty, \qquad \frac{2}{q} + \frac{3}{r}  = \frac{3}{2}.
	\end{align*}		
 	
 	We will also work with exponents from the extended range
	\begin{equation}
		\label{eq:ExtendedRangeAdmissiblePair}
		2 \leq q,r \leq \infty, \qquad \frac{1}{q} \leq \frac{5}{2}\Big(\frac{1}{2} - \frac{1}{r}\Big), \qquad (q,r) \neq \Big(2,\frac{10}{3}\Big)
	\end{equation}	 	
 	 	
 	We then obtain the following estimates.
 	\begin{prop}
 		\label{prop:TimeWeightedStrichartzEstimatesSchroedinger}
 		Let $T \geq 1$ and $\si \geq 0$.
 		\begin{enumerate}
 			\item \label{it:TimeWeightedStrichartzadmadm} Let $(q,r)$ and $(\tilde{q}, \tilde{r})$ be Schr{\"o}dinger admissible pairs. Then
 			\begin{align*}
 				\Big\| \int_t^\infty \eul^{\imu (t-s) \Delta} f(s) \dd s \Big\|_{L^q_\si \dot{B}^0_{r,2}} 
 				\lesssim \| f \|_{L^{\tilde{q}'}_\si \dot{B}^0_{\tilde{r}',2}}.
 			\end{align*}
 			\item \label{it:TimeWeightedStrichartzextadm} Let $(\tilde{q}, \tilde{r})$ be Schr{\"o}dinger admissible with $\tilde{q} \neq 2$. Then
 			\begin{align*}
 				\Big\| \int_t^\infty \eul^{\imu (t-s) \Delta} f(s) \dd s \Big\|_{L^2_\si \dot{B}^{\frac{1}{4}+\ep}_{(q(\ep),\frac{2}{1-\nu}),2}} 
 				\lesssim \| f \|_{L^{\tilde{q}'}_\si \dot{B}^0_{\tilde{r}',2}}.
 			\end{align*}
 			\item \label{it:TimeWeightedStrichartzextext} Let $(q,r)$ be Schr{\"o}dinger admissible and let $(\tilde{q}, \tilde{r})$ satisfy~\eqref{eq:ExtendedRangeAdmissiblePair} with $\tilde{q} \neq 2$. Then
 			\begin{align*}
 				\Big\| \int_t^\infty \eul^{\imu (t-s) \Delta} f(s) \dd s \Big\|_{L^q_\si \dot{B}^{0}_{r,2} \cap L^2_\si \dot{B}^{\frac{1}{4}+\ep}_{(q(\ep),\frac{2}{1-\nu}),2}} 
 				\lesssim \| f \|_{L^{\tilde{q}'}_\si \dot{B}^{\frac{3}{2}-\frac{2}{\tilde{q}}-\frac{3}{\tilde{r}}}_{(\tilde{r}',2),2}}.
 			\end{align*}
 		\end{enumerate}
 	\end{prop}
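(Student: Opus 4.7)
The plan is to reduce each of the three time-weighted estimates to its unweighted counterpart and then invoke Lemma~\ref{lem:TimeWeightedStrichartzAbstract}. All three parts have the same structure: once we identify Banach spaces $X$ and $Y$ for which the unweighted inhomogeneous bound
\begin{equation*}
	\Big\| \int_t^\infty \eul^{\imu(t-s)\Delta} f(s) \dd s \Big\|_{L^q(I_{T'},X)} \lesssim \|f\|_{L^{\tilde{q}'}(I_{T'},Y)}
\end{equation*}
holds uniformly in $T' \geq T$, the time-weighted version is immediate. Thus the task reduces to citing the appropriate unweighted Strichartz estimates (classical in part (i), generalized/spherically-averaged in parts (ii) and (iii)) and then quoting Lemma~\ref{lem:TimeWeightedStrichartzAbstract}.

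For part (\ref{it:TimeWeightedStrichartzadmadm}), I would take $X = \dot B^0_{r,2}$ and $Y = \dot B^0_{\tilde r',2}$. The standard Keel–Tao inhomogeneous Strichartz estimate for the Schrödinger equation, combined with the Littlewood–Paley characterization of Besov spaces (applying it frequency-by-frequency and then summing the $\ell^2$ square function), yields the unweighted bound for any two Schrödinger admissible pairs $(q,r)$ and $(\tilde q,\tilde r)$. Applying Lemma~\ref{lem:TimeWeightedStrichartzAbstract} finishes this case.

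For part (\ref{it:TimeWeightedStrichartzextadm}), the unweighted estimate
\begin{equation*}
	\Big\| \int_t^\infty \eul^{\imu(t-s)\Delta} f(s) \dd s \Big\|_{L^2 \dot{B}^{\frac{1}{4}+\ep}_{(q(\ep),\frac{2}{1-\nu}),2}} \lesssim \|f\|_{L^{\tilde q'}\dot B^0_{\tilde r',2}}
\end{equation*}
follows from the generalized homogeneous Strichartz estimate in spherically averaged spaces from~\cite{GLNW2014,G2016} (which gives $\|\eul^{\imu t\Delta}\ph\|_{L^2\dot B^{1/4+\ep}_{(q(\ep),2/(1-\nu)),2}} \lesssim \|\ph\|_{L^2}$), combined with the Christ–Kiselev lemma to convert the retarded Duhamel operator — the restriction $\tilde q \neq 2$ is precisely the hypothesis for Christ–Kiselev. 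Lemma~\ref{lem:TimeWeightedStrichartzAbstract} then gives the time-weighted version.

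Part (\ref{it:TimeWeightedStrichartzextext}) is the most delicate, since the input pair $(\tilde q,\tilde r)$ is no longer Schrödinger admissible but satisfies the extended range~\eqref{eq:ExtendedRangeAdmissiblePair}. The idea is to absorb the scaling deficit via Besov/Sobolev embedding: by Bernstein's inequality applied to each Littlewood–Paley piece, one has
\begin{equation*}
	\|P_k f\|_{\dot B^0_{\tilde r',2}} \lesssim 2^{k(\frac{3}{2}-\frac{2}{\tilde q}-\frac{3}{\tilde r})}\|P_k f\|_{(\tilde r',2)}
\end{equation*}
so that $\dot B^{\frac{3}{2}-\frac{2}{\tilde q}-\frac{3}{\tilde r}}_{(\tilde r',2),2} \hookrightarrow \dot B^0_{\tilde r_1',2}$ for the admissible companion pair $(\tilde q,\tilde r_1)$ with $\frac{2}{\tilde q}+\frac{3}{\tilde r_1}=\frac{3}{2}$, provided $\tilde r' \leq \tilde r_1'$, which is exactly what~\eqref{eq:ExtendedRangeAdmissiblePair} encodes. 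After this reduction, the output norm $L^q_\sigma \dot B^0_{r,2} \cap L^2_\sigma \dot B^{1/4+\ep}_{(q(\ep),2/(1-\nu)),2}$ is controlled by combining parts (\ref{it:TimeWeightedStrichartzadmadm}) and (\ref{it:TimeWeightedStrichartzextadm}) with the admissible pair $(\tilde q,\tilde r_1)$. The only subtlety is to verify that the radial/angular Besov embedding behaves correctly — i.e.\ that $\cL^{\tilde r'}_r L^2_\theta$ can be embedded suitably — which is done componentwise using Bernstein on each dyadic block and the square-function sum.

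The main obstacle in presenting this rigorously is not any single step but cleanly quoting the extended Strichartz estimates in the form needed; once the unweighted bounds are in place, Lemma~\ref{lem:TimeWeightedStrichartzAbstract} is a black box and handles the time weights uniformly across all three parts.
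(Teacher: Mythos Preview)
Your treatment of parts~\ref{it:TimeWeightedStrichartzadmadm} and~\ref{it:TimeWeightedStrichartzextadm} is correct and matches the paper's approach: Lemma~\ref{lem:TimeWeightedStrichartzAbstract} reduces matters to the unweighted estimate, and for~\ref{it:TimeWeightedStrichartzextadm} the paper likewise combines the homogeneous generalized Strichartz estimate from~\cite{G2016} with the dual classical Strichartz bound and the Christ--Kiselev lemma.

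Part~\ref{it:TimeWeightedStrichartzextext}, however, contains a genuine gap. Your plan is to reduce to parts~\ref{it:TimeWeightedStrichartzadmadm}--\ref{it:TimeWeightedStrichartzextadm} via the embedding $\dot{B}^{\mu}_{(\tilde r',2),2} \hookrightarrow \dot{B}^0_{\tilde r_1',2}$, where $\mu = \tfrac{3}{2}-\tfrac{2}{\tilde q}-\tfrac{3}{\tilde r}$ and $(\tilde q,\tilde r_1)$ is Schr\"odinger admissible. You assert that~\eqref{eq:ExtendedRangeAdmissiblePair} encodes $\tilde r' \leq \tilde r_1'$, but it does not: the condition $\tilde r' \leq \tilde r_1'$ is equivalent to $\tfrac{1}{\tilde q} \leq \tfrac{3}{2}(\tfrac{1}{2}-\tfrac{1}{\tilde r})$, i.e.\ $\mu \geq 0$, whereas~\eqref{eq:ExtendedRangeAdmissiblePair} only demands $\tfrac{1}{\tilde q} \leq \tfrac{5}{2}(\tfrac{1}{2}-\tfrac{1}{\tilde r})$. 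The interesting regime of the extended range --- and the one actually used later in Lemma~\ref{lem:BilinearEstimateSchroedinger} for the $\alpha L$ part of the $\linwave\nlschr$ interaction --- has $\mu < 0$ (there $\mu = -\tfrac14-\ep$). In that regime $\tilde r_1' < \tilde r'$, and the claimed Bernstein embedding goes the wrong way: passing from $\cL^{\tilde r'}_r L^2_\theta$ to $L^{\tilde r_1'}$ would require lowering the radial Lebesgue exponent on an infinite-measure space, which frequency localization does not provide.

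The paper avoids this by \emph{not} reducing to an admissible input pair. Instead, in the chain~\eqref{eq:estStrextadm} it replaces the classical dual Strichartz step by the dual of the generalized (spherically averaged) Strichartz estimate from~\cite[Theorem~1.1 and Corollary~2.11]{G2016}, which directly gives
\[
\Big\| \int_{I_T} e^{-\imu s \Delta} P_0 f(s) \dd s \Big\|_{L^2} \lesssim \| P_0 f \|_{L^{\tilde q'}_t \cL^{\tilde r'}_r L^2_\theta}
\]
for $(\tilde q,\tilde r)$ in the extended range. After rescaling and Christ--Kiselev this yields the unweighted version of~\ref{it:TimeWeightedStrichartzextext}, and Lemma~\ref{lem:TimeWeightedStrichartzAbstract} then supplies the time weight. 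The point is that the extended-range estimate is genuinely needed on the \emph{input} side; it cannot be recovered from classical Strichartz by a Besov embedding.
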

 	
 	\begin{proof}
 		Part~\ref{it:TimeWeightedStrichartzadmadm} follows from Lemma~\ref{lem:TimeWeightedStrichartzAbstract} and Strichartz estimates for admissible pairs, see~\cite{KT1998}.
 		
 		For~\ref{it:TimeWeightedStrichartzextadm} we first note that the triple $(2, q(\ep), \frac{2}{1-\nu})$ satisfies the assumptions of Corollary~2.9 in~\cite{G2016}, which yields
 		\begin{align*}
 			\|\eul^{\imu t \Delta} P_0 g \|_{L^2_t \cL^{q(\ep)}_r L_\theta^{\frac{2}{1-\nu}}} \lesssim \| P_0 g \|_{L^2}
 		\end{align*}
 		for all $g \in L^2(\R^3)$. Applying this estimate to $g = \int_{I_T} \eul^{-\imu s \Delta}  f(s) \dd s$ and using Strichartz estimates for admissible pairs, we get
 		\begin{align}
 		\label{eq:estStrextadm}
 			\Big\| \int_{I_T} \eul^{\imu (t-s) \Delta} P_0 f(s) \dd s \Big\|_{L^2_t \cL^{q(\ep)}_r L_\theta^{\frac{2}{1-\nu}}} \lesssim \Big\| \int_{I_T} \eul^{-\imu s \Delta} P_0 f(s) \dd s\Big\|_{L^2} 
 			\lesssim \|P_0 f\|_{L^{\tilde{q}'}(I_T, L^{\tilde{r}'})}.
 		\end{align}
 		Rescaling to frequencies of size $2^k$, taking the $l^2$-norm in $k$, and employing the Christ-Kiselev lemma (see~\cite{CK2001, T2000}), we thus obtain the assertion.
 		
 		 Replacing the application of Strichartz estimates for admissible pairs in~\eqref{eq:estStrextadm} by the dual estimate one obtains from Theorem~1.1 in~\cite{G2016}, cf. \cite[Corollary~2.11]{G2016}, we infer part~\ref{it:TimeWeightedStrichartzextext} in the same way.
 	\end{proof}
 	
 	\begin{rem}
 	\label{rem:TimeWeightedEnergyEstimateWave}
 	The arguments from Lemma~\ref{lem:TimeWeightedStrichartzAbstract} and Proposition~\ref{prop:TimeWeightedStrichartzEstimatesSchroedinger} also work for the half-wave group. However, for the half-wave group we only use the energy estimate, where the time-weighted version
 		\begin{equation}
 			\label{eq:TimeWeightedEnergyEstimateWave}
 			\Big\|\int_t^\infty \eul^{\imu \alpha (t-s) |\nabla|} f(s) \dd s \Big\|_{L^\infty_\si L^2}
 			\lesssim \| f \|_{L^1_\si L^2}
 		\end{equation}
 		for all $f \in L^1_\si(I_T, L^2(\R^3))$ is immediate, cf.~\eqref{eq:TimeWeightqinfty}.
 	\end{rem}
 	
	In view of the large deviation estimate in Lemma~\ref{lem:LargeDeviationEstimate} below, we will not be able to use $L^\infty$-norms in time for the linear solutions. We will avoid these $L^\infty$-norms by employing the following Sobolev embedding type estimates.
	 	
 	\begin{lem}
 		\label{lem:SobolevSchroedingerWave}
 		Fix $T \geq 1$,  $\mu \in \R$, and exponents $q \in [1,\infty)$ and $r \in (2,\infty)$. 
 		\begin{enumerate}
 			\item \label{it:SobolevSchroedinger} Take $u_0 \in L^2(\R^3)$ such that $\linschr(t) = \eul^{\imu t \Delta} u_0$ belongs to $L^q_\si(I_T, \dot{B}^{\mu + \frac{2}{q}}_{r,2}(\R^3))$. Then
 				\begin{align*}
 					\|P_k \linschr \|_{L^\infty_\si L^r} \lesssim 2^{\frac{2}{q} k}\|P_k \linschr \|_{L^q_\si L^r}
 				\end{align*}
 				for all $k \in \Z$.
 			\item \label{it:SobolevWave} Take $ v_0 \in L^2(\R^3)$ such that $\linwave(t) = \eul^{\imu \alpha t |\nabla|} v_0$ belongs to $\btwt{\mu + \frac{1}{q}}{q}{r}(I_T \times \R^3)$. Then
 				\begin{align*}
 					\|\linwave\|_{L^\infty \dot{B}^\mu_{r,2}} \lesssim \|\linwave\|_{\dot{B}^{\mu + \frac{1}{q}}_{q,r,2}}.
 				\end{align*}
 		\end{enumerate}
 	\end{lem}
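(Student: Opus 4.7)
The plan is to derive both parts as Bernstein-type inequalities in the time variable, exploiting the fact that $P_k\linschr$ and $P_k\linwave$ are spatially frequency-localized at $2^k$ and solve dispersive equations whose generators have operator norm $\sim 2^{2k}$ (respectively $\sim 2^k$) on such functions. The target is the per-frequency bound in (i) directly, and the per-frequency version $\|P_k\linwave\|_{L^\infty_t L^r}\lesssim 2^{k/q}\|P_k\linwave\|_{L^q_t L^r}$ for (ii); the Besov statement then follows by exchanging $\sup_t$ with the $\ell^2_k$ summation in the Besov norm.

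For part (i), I set $w(t)=P_k\linschr(t)$ and use $\partial_t w=\imu\Delta w$ together with the spatial Bernstein estimate $\|\Delta P_k f\|_{L^r}\lesssim 2^{2k}\|P_k f\|_{L^r}$ to obtain $\|\partial_t w(t)\|_{L^r}\lesssim 2^{2k}\|w(t)\|_{L^r}$. For any fixed $t^*\in I_T$, I consider the interval $J=[t^*,t^*+c\,2^{-2k}]\subseteq I_T$ with $c$ a small absolute constant. Integrating the differential inequality and absorbing the sup on the right-hand side yields $\|w(\tau)\|_{L^r}\sim\|w(t^*)\|_{L^r}$ uniformly for $\tau\in J$. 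Raising to the $q$-th power and integrating over $J$ then gives
$$c\,2^{-2k}\,\|w(t^*)\|_{L^r}^q\lesssim\int_J\|w(\tau)\|_{L^r}^q\dd\tau.$$
Since $\si\geq 0$ and $\tau\geq t^*$ on $J$, we have $(t^*)^{\si q}\int_J\|w\|_{L^r}^q\dd\tau\leq\|P_k\linschr\|_{L^q_\si L^r}^q$, whence $(t^*)^\si\|w(t^*)\|_{L^r}\lesssim 2^{2k/q}\|P_k\linschr\|_{L^q_\si L^r}$. Taking the supremum over $t^*\in I_T$ proves (i).

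Part (ii) runs the same argument with $\partial_t P_k\linwave=\imu\alpha|\nabla|P_k\linwave$, the Bernstein bound $\||\nabla|P_k f\|_{L^r}\lesssim 2^k\|P_k f\|_{L^r}$, no time weight ($\si=0$), and the shorter interval $J=[t^*,t^*+c\,2^{-k}]$, producing the per-frequency estimate above. Inserting it into
$$\|\linwave\|_{L^\infty \dot B^\mu_{r,2}}^2=\sup_t\sum_{k\in\Z} 2^{2k\mu}\|P_k\linwave(t)\|_{L^r}^2\leq\sum_{k\in\Z} 2^{2k\mu}\|P_k\linwave\|_{L^\infty_t L^r}^2$$
and recognizing the right-hand side as $\|\linwave\|_{\btwt{\mu+\frac{1}{q}}{q}{r}}^2$ finishes (ii).

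The only technical point is the absorption step producing $\|w(\tau)\|_{L^r}\sim\|w(t^*)\|_{L^r}$ on $J$ with a constant uniform in $k\in\Z$ and $t^*\in I_T$; this is routine once $c$ is chosen small enough so that the constant from Bernstein times $c$ is below $1/4$, and the interval $J$ lies inside $I_T$ automatically from $t^*\geq T$, so no restriction on the range of $k$ is needed.
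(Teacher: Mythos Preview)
Your argument is correct and follows a genuinely different route from the paper. The paper proves the per-frequency bound in (i) by the fundamental theorem of calculus: it writes
\[
\|P_k\linschr(t)\|_{L^r}^q=-\int_t^\infty \partial_s\|P_k\linschr(s)\|_{L^r}^q\dd s,
\]
using that $\|P_k\linschr(s)\|_{L^r}\to 0$ as $s\to\infty$ (which follows from the finiteness of the $L^q_\si L^r$-norm together with the uniform Lipschitz bound on $s\mapsto\|P_k\linschr(s)\|_{L^r}$). It then bounds $\partial_s\|P_k\linschr\|_{L^r}^q$ via the equation and Bernstein, inserts $s^{q\si}\cdot s^{-q\si}\leq s^{q\si}t^{-q\si}$ on $[t,\infty)$, and reads off the result. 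Your approach instead exploits local-in-time stability: on an interval of length $c\,2^{-2k}$ (resp.\ $c\,2^{-k}$) the $L^r$-norm is essentially constant, so averaging over that interval converts $L^\infty_t$ into $L^q_t$ with the claimed frequency factor. This is the standard ``temporal Bernstein'' argument and has the minor advantage that it does not invoke the vanishing of $\|P_k\linschr(t)\|_{L^r}$ at infinity; the paper's integration-to-infinity argument is slightly shorter once that decay is taken for granted. For part (ii) both proofs coincide after the per-frequency estimate: one bounds $\sup_t\sum_k$ by $\sum_k\sup_t$ and inserts the single-frequency bound.
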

 	
 	\begin{proof}
 		\ref{it:SobolevSchroedinger} Let $k \in \Z$. Since $P_k \linschr$ is smooth and $\|P_k \linschr(t)\|_{L^r} \rightarrow 0$ as $t \rightarrow \infty$, we compute
 		\begin{align*}
 			&\| P_k \linschr(t) \|_{L^r}^q = - \int_t^\infty \partial_t \|P_k \linschr \|_{L^r}^q \dd s \\
 			&= -q \int_{t}^\infty \|P_k \linschr(s)\|_{L^r}^{q-r} \int_{\R^3} |P_k \linschr(s)|^{r-2} \Re(\overline{P_k \linschr} \partial_t P_k \linschr)(s) \dd x \dd s \\
 			&\lesssim t^{-q \si} \int_{t}^\infty s^{q \si} \|P_k \linschr(s)\|_{L^r}^{q-1} \|\partial_t  P_k \linschr(s)\|_{L^r} \dd s.
 		\end{align*}
 		Using that $\linschr$ is a solution of the linear Schr{\"o}dinger equation and thus 
			\begin{align*}
				\|\partial_t P_k \linschr \|_{L^r} = \| \Delta P_k \linschr\|_{L^r} \lesssim 2^{2k} \|P_k \linschr\|_{L^r},
			\end{align*}			 		
 		we infer
 		\begin{align*}
 			t^{\si} \| P_k \linschr(t) \|_{L^r} \lesssim 2^{\frac{2}{q} k} \|P_k \linschr\|_{L^q_\si L^r}.
 		\end{align*}
 		Taking the $L^\infty$-norm in time, the assertion now follows.
 		
 		\ref{it:SobolevWave} As in~\ref{it:SobolevSchroedinger} we derive 
 		\begin{align*}
 			\|P_k \linwave(t)\|_{L^r} \lesssim 2^{\frac{1}{q}k} \|P_k \linwave \|_{L^q L^r}
		\end{align*}
		 for all $k \in \Z$. Multiplying with $2^{\mu k}$, taking the $l^2$-norm in $k$ and then the $L^\infty$-norm in $t$, we obtain the assertion.
 	\end{proof}
 
 \section{Linear estimates with randomized data}
 \label{sec:LinearRandomizedEstimates}
 
	In this section we prove improved space-time estimates for the linear solutions of the Schr{\"o}dinger and half-wave equation with physical-space respectively angular randomized data. 
 
 	\subsection{Probabilistic estimates}
 	We start by collecting two probabilistic results. The first one is a crucial tool in the proof of randomization improved estimates.
 	This large deviation estimate can be found in Lemma~3.1 in~\cite{BT2008I}.
 	\begin{lem}
 		\label{lem:LargeDeviationEstimate}
 		Let $(X_k)_{k \in \N}$ be a sequence of independent, real-valued, zero-mean random variables on a probability space $(\Omega, \cA, \PP)$ whose distributions $(\mu_k)_{k \in \N}$ satisfy~\eqref{eq:ConditionRandomVariables}. Then there is a constant $C > 0$ such that
 		\begin{align*}
 			\Big\| \sum_{k \in \N} c_k X_k \Big\|_{L^\beta(\Omega)} \leq C \sqrt{\beta} \, \Big( \sum_{k \in \N} |c_k|^2 \Big)^{\frac{1}{2}}
 		\end{align*}
 		for all $\beta \in [2,\infty)$ and $(c_k)_{k \in \N} \in l^2(\N)$.
 	\end{lem}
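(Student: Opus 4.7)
The plan is a Chernoff-type argument converting the subGaussian moment hypothesis~\eqref{eq:ConditionRandomVariables} into a subGaussian tail estimate for the partial sums $S_N = \sum_{k=1}^N c_k X_k$, which is then integrated against the layer cake formula to produce the $L^\beta$-bound. For the tail bound, I would fix $N \in \N$ and $\gamma > 0$ and start from Markov's inequality $\PP(S_N > \lambda) \leq e^{-\gamma \lambda} \mathbb{E}[e^{\gamma S_N}]$. Independence of the $X_k$ and the hypothesis applied with parameter $\gamma c_k$ yield
\begin{align*}
\mathbb{E}[e^{\gamma S_N}] = \prod_{k=1}^N \mathbb{E}[e^{\gamma c_k X_k}] \leq \prod_{k=1}^N e^{c \gamma^2 c_k^2} = e^{c \gamma^2 A},
\end{align*}
where $A = \sum_{k=1}^N |c_k|^2$. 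Optimizing with $\gamma = \lambda/(2cA)$ gives $\PP(S_N > \lambda) \leq e^{-\lambda^2/(4cA)}$; since $-X_k$ satisfies the same hypothesis, the identical bound applies to $\PP(-S_N > \lambda)$, and a union bound yields $\PP(|S_N| > \lambda) \leq 2 e^{-\lambda^2/(4cA)}$.

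Next I would invoke the layer cake formula
\begin{align*}
\mathbb{E}[|S_N|^\beta] = \beta \int_0^\infty \lambda^{\beta-1} \PP(|S_N| > \lambda) \dd \lambda \leq 2 \beta \int_0^\infty \lambda^{\beta-1} e^{-\lambda^2/(4cA)} \dd \lambda,
\end{align*}
and substitute $u = \lambda^2/(4cA)$ to obtain $\mathbb{E}[|S_N|^\beta] \leq \beta \, (4cA)^{\beta/2} \Gamma(\beta/2)$. Taking $\beta$-th roots and using Stirling's bound $\Gamma(\beta/2)^{1/\beta} \lesssim \sqrt{\beta}$ (together with the uniform bound $\beta^{1/\beta} \leq e^{1/e}$) gives $\|S_N\|_{L^\beta(\Omega)} \leq C \sqrt{\beta} \, \|c\|_{l^2}$ with $C$ independent of $N$.

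To pass to the infinite sum, I note that by the mean-zero and independence assumptions the partial sums $S_N$ form a Cauchy sequence in $L^2(\Omega)$ (with $\|S_N - S_M\|_{L^2}^2 = \sum_{k=M+1}^N |c_k|^2$), hence converge almost surely along a subsequence to $\sum_{k \in \N} c_k X_k$. Fatou's lemma then transfers the uniform-in-$N$ bound to the infinite sum, giving the claim. The argument presents no substantial obstacle; it is essentially the classical Khintchine inequality for subGaussian variables, and hypothesis~\eqref{eq:ConditionRandomVariables} is crafted precisely so the Chernoff step produces a Gaussian tail with the correct variance $\sim \|c\|_{l^2}^2$. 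The only points requiring mild care are verifying that the random variables $-X_k$ inherit the moment-generating-function bound (immediate since~\eqref{eq:ConditionRandomVariables} is symmetric in $\gamma \mapsto -\gamma$) and tracking the constant $\beta^{1/\beta} \Gamma(\beta/2)^{1/\beta}$ uniformly on $[2,\infty)$.
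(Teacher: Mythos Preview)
Your proof is correct and follows the standard Chernoff--layer-cake route for Khintchine-type inequalities under a subGaussian moment-generating hypothesis. The paper itself does not give a proof of this lemma; it simply cites Lemma~3.1 in~\cite{BT2008I}, where precisely the argument you outline appears. So there is no divergence in approach to discuss: you have supplied the proof the paper omits by reference, and your handling of the minor points (symmetry in $\gamma$, Stirling control of $\Gamma(\beta/2)^{1/\beta}$, passage to the infinite sum via Fatou) is sound.
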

 	
 	The next lemma is taken from~\cite[Lemma~2.4]{M2019}, see~\cite[Lemma~4.5]{T2010} or \cite[Lemma~2.2]{BOPII2015} for a proof.
 	\begin{lem}
 		\label{lem:MeasureFromLargeDeviation}
 		Let $F$ be a measurable function on a probability space $(\Omega, \cA, \PP)$. Assume that there are constants $C, A > 0$ and $\beta_0 \geq 1$ such that
 		\begin{align*}
 			\|F\|_{L^\beta(\Omega)} \leq C \sqrt{\beta} A
 		\end{align*}
 		for all $\beta \geq \beta_0$. Then there are constants $C', c > 0$ such that 
 		\begin{align*}
 			\PP(\omega \in \Omega \colon |F(\omega)| > \lambda) \leq C' \eul^{- c \lambda^2 A^{-2}}
 		\end{align*}
 		for all $\lambda > 0$.
 	\end{lem}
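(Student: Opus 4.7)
The plan is to apply Markov's inequality at exponent $\beta$ and then optimize the resulting bound in $\beta$, which is the standard route from moment estimates to Gaussian tails. The hypothesis on $F$ controls every $L^\beta$-norm with $\beta \ge \beta_0$, so for any $\lambda > 0$ and $\beta \ge \beta_0$ Markov gives
\begin{align*}
 \PP(|F(\omega)| > \lambda) \le \lambda^{-\beta} \|F\|_{L^\beta(\Omega)}^\beta \le \Big( \frac{C A \sqrt{\beta}}{\lambda} \Big)^\beta.
\end{align*}
Writing $\gamma = \lambda/(CA)$, the right-hand side equals $\exp\bigl( \tfrac{\beta}{2}\log\beta - \beta \log\gamma \bigr)$, and minimizing in $\beta$ leads to the choice $\beta = \gamma^2/\eul$.

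With this choice the bound becomes $\exp(-\beta/2) = \exp\bigl( - \tfrac{\lambda^2}{2\eul C^2 A^2} \bigr)$, which is the desired Gaussian decay with $c = (2\eul C^2)^{-1}$. The only caveat is that the optimal $\beta$ must exceed $\beta_0$, which is equivalent to $\lambda \ge \lambda_0 := CA\sqrt{\eul \beta_0}$. For $\lambda \ge \lambda_0$ we therefore obtain $\PP(|F| > \lambda) \le \eul^{-c\lambda^2 A^{-2}}$ directly.

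For the remaining range $0 < \lambda < \lambda_0$, the trivial bound $\PP(|F| > \lambda) \le 1$ suffices: we enlarge $C'$ so that $C' \eul^{-c \lambda^2 A^{-2}} \ge 1$ for all $\lambda \le \lambda_0$, which is possible because $c\lambda_0^2 A^{-2}$ depends only on $C$ and $\beta_0$ (and not on $A$ or $\lambda$). Combining the two ranges yields the claim with suitable $C', c > 0$. There is no real obstacle here beyond bookkeeping; the entire argument is elementary optimization, and this is why this lemma is invoked in the paper as a black box.
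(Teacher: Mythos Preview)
Your argument is correct and is precisely the standard Markov-plus-optimization proof; the paper does not give its own proof of this lemma but cites \cite[Lemma~4.5]{T2010} and \cite[Lemma~2.2]{BOPII2015}, where exactly this computation is carried out. Your bookkeeping of the threshold $\lambda_0 = CA\sqrt{\eul\beta_0}$ and the observation that $c\lambda_0^2 A^{-2} = \beta_0/2$ is independent of $A$ are accurate, so taking $C' = \eul^{\beta_0/2}$ and $c = (2\eul C^2)^{-1}$ works.
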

 
 	\subsection{The linear Schr{\"o}dinger equation with physical-space randomized data}
 	
 	We first show improved space-time estimates for the solution of the linear Schr{\"o}dinger equation with data randomized in physical space as in~\eqref{eq:DefPhysicalSpaceRandomization}. In a first step we note that the Littlewood-Paley operators do not commute with the physical-space randomization. We will thus need the following estimates. Similar ones have already appeared in~\cite{B2019,DLM2019}.
	\begin{lem}
		\label{lem:MismatchEstimates}
		Let $1 \leq p < \infty$, $k, j \in \N$ with $|k - j| \geq 5$, $l, l' \in \Z^3$,  $D > 0$, and $(\psi_m)_{m \in \Z^3}$ the partition of unity introduced in~\eqref{eq:DefPartitionUnityPhysicalSpaceRandomization}. We then have
 	\begin{align}
 		\label{eq:MismatchSpaceFrequencySpace}
 		&\| \psi_l P_k (\psi_{l'}f) \|_{L^{p}} + \| \psi_l P_{\leq 0} (\psi_{l'}f) \|_{L^{p}} \lesssim_D  \langle l - l' \rangle^{-D} \|f\|_{L^p}, \\
 		\label{eq:MismatchFrequencySpaceFrequency}
 		&\| P_k (\psi_l P_j f)\|_{L^{p}} \lesssim_D 2^{- D k} 2^{- D j} \|f\|_{L^p}, \\
 		\label{eq:MismatchFrequencySpaceFrequency2}
 		& \| P_k (\psi_l P_{\leq k - 5} f) \|_{L^{p} } \lesssim_D 2^{- D k} \|f\|_{L^p}
 	\end{align}
 	for all $f \in L^p(\R^3)$, where the implicit constants are independent of $j,k,l,$ and $l'$.
	\end{lem}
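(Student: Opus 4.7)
The plan is to establish \eqref{eq:MismatchSpaceFrequencySpace} by a direct kernel estimate exploiting the spatial localization of $\psi_l$ and $\psi_{l'}$, and to handle \eqref{eq:MismatchFrequencySpaceFrequency} and \eqref{eq:MismatchFrequencySpaceFrequency2} by a Littlewood--Paley decomposition of $\psi_l$ combined with a Fourier-support analysis. Throughout, the key uniform bound I will use is $\|\psi_l\|_{W^{N,\infty}}\lesssim_N 1$ independently of $l$, which follows from the construction \eqref{eq:DefPartitionUnityPhysicalSpaceRandomization}: the denominator $\sum_{m\in\Z^3}\phi(x-m)$ is smooth, $\Z^3$-periodic, and uniformly bounded above and below by positive constants, so the Schwartz seminorms of $\psi_l=\phi(\cdot-l)/\sum_m\phi(\cdot-m)$ are uniform in~$l$.

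For \eqref{eq:MismatchSpaceFrequencySpace} I would split into cases. When $|l-l'|\lesssim 1$, the target $\langle l-l'\rangle^{-D}$ is of order one, and the estimate reduces to $L^p$-boundedness of $P_k$ and $P_{\leq 0}$ together with $\|\psi_l\|_{L^\infty}+\|\psi_{l'}\|_{L^\infty}\lesssim 1$. In the far regime $|l-l'|\gg 1$, for $x\in\supp\psi_l\subset B(l,2)$ and $y\in\supp\psi_{l'}\subset B(l',2)$ one has $|x-y|\sim|l-l'|$. Using the standard bound $|K_k(u)|\lesssim_M 2^{3k}(1+2^k|u|)^{-M}$ with $M>3$ and $k\in\N$, and noting $2^k|x-y|\gtrsim|l-l'|\gg 1$, I get
\begin{equation*}
|K_k(x-y)|\lesssim 2^{(3-M)k}|l-l'|^{-M}\lesssim|l-l'|^{-M},
\end{equation*}
uniformly in $k\geq 0$. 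The corresponding bound for $K_{\leq 0}$ is immediate since $K_{\leq 0}$ is Schwartz. Combined with $\|\psi_{l'}f\|_{L^1}\lesssim\|f\|_{L^p}$ (H\"older plus the bounded support of $\psi_{l'}$), this yields the pointwise bound $|P_k(\psi_{l'}f)(x)|\lesssim\langle l-l'\rangle^{-D}\|f\|_{L^p}$ for all $x\in B(l,2)$, and integrating against $\psi_l\cdot\one_{B(l,2)}$ in $L^p$ finishes the estimate.

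For \eqref{eq:MismatchFrequencySpaceFrequency}, the essential observation is that for $|k-j|\geq 5$ the product $\psi_l\cdot P_j f$ can only produce Fourier content at $|\xi|\sim 2^k$ through the high-frequency part of $\psi_l$. Consider first the case $k>j+5$. Splitting $\psi_l=P_{\leq k-3}\psi_l+P_{\geq k-2}\psi_l$, the product $P_{\leq k-3}\psi_l\cdot P_j f$ has Fourier support in $\{|\xi|\leq 2^{k-2}+2^{j+1}\}\subset\{|\xi|<2^{k-1}\}$, so $P_k$ annihilates it. Only the high part contributes, and the Fourier-side estimate $|\widehat{P_m\psi_l}(\xi)|\lesssim_N(1+|\xi|)^{-N}\one_{|\xi|\sim 2^m}$ for Schwartz $\psi_l$ gives $\|P_m\psi_l\|_{L^\infty}\lesssim_N 2^{-Nm}$, hence $\|P_{\geq k-2}\psi_l\|_{L^\infty}\lesssim_N 2^{-Nk}$. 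Therefore
\begin{equation*}
\|P_k(\psi_l P_j f)\|_{L^p}\lesssim\|P_{\geq k-2}\psi_l\|_{L^\infty}\|P_j f\|_{L^p}\lesssim 2^{-Nk}\|f\|_{L^p}.
\end{equation*}
The symmetric case $j>k+5$ is handled by the dual splitting $\psi_l=P_{\leq j-3}\psi_l+P_{\geq j-2}\psi_l$, which by the same Fourier-support computation yields $\|P_k(\psi_l P_j f)\|_{L^p}\lesssim 2^{-Nj}\|f\|_{L^p}$. In both cases, choosing $N=2D$ and using $2^{-2D\max(k,j)}\leq 2^{-Dk}2^{-Dj}$ gives \eqref{eq:MismatchFrequencySpaceFrequency}. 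Estimate \eqref{eq:MismatchFrequencySpaceFrequency2} follows verbatim with $P_{\leq k-5}f$ in place of $P_j f$: the product $P_{\leq k-3}\psi_l\cdot P_{\leq k-5}f$ has Fourier support in $\{|\xi|\leq 2^{k-2}+2^{k-4}\}\subset\{|\xi|<2^{k-1}\}$ and is killed by $P_k$, so only $P_{\geq k-2}\psi_l$ contributes, producing the required $2^{-Dk}$ decay.

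The whole argument is a fairly standard bandwidth-separation computation; the only technical point is ensuring uniformity in the spatial index $l$, which is secured once the uniform Schwartz bound on $\psi_l$ is established from \eqref{eq:DefPartitionUnityPhysicalSpaceRandomization}. I do not anticipate any substantive obstacle beyond the careful tracking of Fourier supports in the Littlewood--Paley decomposition of $\psi_l$.
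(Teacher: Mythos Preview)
Your proposal is correct and follows essentially the same approach as the paper's proof. Both arguments treat~\eqref{eq:MismatchSpaceFrequencySpace} by a direct kernel bound exploiting the spatial separation $|x-y|\sim|l-l'|$ on $\supp\psi_l\times\supp\psi_{l'}$, and both treat~\eqref{eq:MismatchFrequencySpaceFrequency}--\eqref{eq:MismatchFrequencySpaceFrequency2} via the Fourier-support observation that only the high-frequency part $P_{\geq k-3}\psi_l$ (resp.\ $P_{\geq j-3}\psi_l$) can contribute, together with the Schwartz decay of $\hat\psi_l$; note that since $\psi_l=\psi_0(\cdot-l)$ the uniformity in $l$ is immediate. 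One minor point: your case split reads ``$k>j+5$'' and ``$j>k+5$'', which omits $|k-j|=5$, but your Fourier-support computation goes through unchanged for $k\geq j+5$ (and symmetrically), so this is only a wording slip.
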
 	
	
	The proof of Lemma~\ref{lem:MismatchEstimates} is straightforward. We provide the details in Appendix~\ref{app:ProofOfMismatch} for the convenience of the reader.

 	The previous lemma now implies the following result which fits to our applications.
	\begin{cor}
		\label{cor:EstimatePkpsill2kl2lLp}
		Let $p \in [2,\infty)$, $f \in H^1(\R^3)$, and $(\psi_l)_{l \in \Z^3}$ the partition of unity introduced in~\eqref{eq:DefPartitionUnityPhysicalSpaceRandomization}. We then have the estimate
		\begin{align*}
			\| (1+2^{2k})^{\frac{1}{2}} P_k (\psi_l f) \|_{l^2_k l^2_l L^{p'}_x} \lesssim \|f\|_{H^1}.
		\end{align*}
	\end{cor}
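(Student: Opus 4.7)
The plan is to unwind the target quantity as the double sum
\begin{align*}
\|(1+2^{2k})^{1/2} P_k(\psi_l f)\|_{l^2_k l^2_l L^{p'}_x}^2 = \sum_{k \in \Z} \sum_{l \in \Z^3} (1+2^{2k}) \|P_k(\psi_l f)\|_{L^{p'}}^2,
\end{align*}
and, for each fixed $k$, decompose $f = \tilde P_k f + P_{\leq k-5} f + P_{\geq k+5} f$, splitting $P_k(\psi_l f)$ into a diagonal (frequency-matched) piece and two off-diagonal mismatch pieces.

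For the diagonal piece $P_k(\psi_l \tilde P_k f)$, the $L^{p'}$-boundedness of $P_k$ (Young's inequality with its Schwartz kernel) reduces matters to $\|\psi_l \tilde P_k f\|_{L^{p'}}$. Since $\psi_l$ is supported in $B_2(l)$ and $p' \in (1,2]$, Hölder gives
\begin{align*}
\|\psi_l \tilde P_k f\|_{L^{p'}} \lesssim \|\tilde P_k f\|_{L^2(B_2(l))}.
\end{align*}
The square sum over $l$ collapses via the finite overlap of the balls $(B_2(l))_{l \in \Z^3}$ to $\|\tilde P_k f\|_{L^2}^2$, and summation in $k$ against $(1+2^{2k})$ then yields $\|f\|_{H^1}^2$ by the Littlewood--Paley characterization.

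For the off-diagonal pieces, I apply the mismatch bounds \eqref{eq:MismatchFrequencySpaceFrequency} and \eqref{eq:MismatchFrequencySpaceFrequency2} from Lemma~\ref{lem:MismatchEstimates}, which deliver arbitrarily strong frequency decay factors $2^{-Dk-Dj}$ respectively $2^{-Dk}$. These decays easily defeat the $(1+2^{2k})$ weight as well as the summation in $j$. To secure $l$-summability, I pair each squared $L^{p'}$-norm in a product form: one copy of $\|P_k(\psi_l P_j f)\|_{L^{p'}}$ is controlled by the mismatch estimate (producing the frequency decay), while the other is controlled by the Young+compact-support estimate $\|P_k(\psi_l P_j f)\|_{L^{p'}} \lesssim \|P_j f\|_{L^2(B_2(l))}$. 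The latter then provides the square-summability over $l$ through the finite overlap of $(B_2(l))$, yielding a term $\|P_j f\|_{L^2}^2$ dressed with the $2^{-D(k+j)}$ gain. Summing finally in $k$ and $j$ against the weight $(1+2^{2k})$ gives at most $C \|f\|_{L^2}^2 \leq C\|f\|_{H^1}^2$.

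The main obstacle I expect is precisely this $l$-summation in the off-diagonal pieces: the mismatch estimates of Lemma~\ref{lem:MismatchEstimates} are uniform in $l$ and so cannot be summed in $l$ by themselves. The resolution is the asymmetric pairing above, trading one factor of mismatch decay for the localized bound coming from the compact support of $\psi_l$. Everything else is a clean consequence of Littlewood--Paley theory, Young, and Hölder with the finite overlap of the partition of unity.
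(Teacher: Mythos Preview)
Your asymmetric pairing for the off-diagonal pieces does not close. Writing $a_l = \|P_k(\psi_l P_j f)\|_{L^{p'}}$, the mismatch bound~\eqref{eq:MismatchFrequencySpaceFrequency} gives $a_l \lesssim M_{k,j}$ with $M_{k,j}$ \emph{independent of $l$}, while Young plus compact support gives $a_l \lesssim b_l := \|P_j f\|_{L^2(B_2(l))}$. The product yields $a_l^2 \lesssim M_{k,j}\, b_l$, hence
\[
\sum_{l\in\Z^3} a_l^2 \lesssim M_{k,j} \sum_{l\in\Z^3} b_l.
\]
But the finite overlap of the balls only controls $\sum_l b_l^2 \lesssim \|P_j f\|_{L^2}^2$, not the sum $\sum_l b_l$; the latter is generically infinite (spread the $L^2$-mass of $P_j f$ over a region of volume $N$ and it grows like $N^{1/2}$). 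So you do not obtain the claimed $2^{-D(k+j)}\|P_j f\|_{L^2}^2$. A secondary issue is that Lemma~\ref{lem:MismatchEstimates}, as stated, returns $\|f\|_{L^{p'}}$ on the right-hand side, which need not be finite for $f\in H^1(\R^3)$ when $p'<2$.

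The paper fixes precisely this gap by inserting an additional spatial partition $f=\sum_{l'}\psi_{l'}f$ and then combining \emph{both} mismatch estimates in the product: the frequency mismatch~\eqref{eq:MismatchFrequencySpaceFrequency} (applied to the compactly supported $\psi_{l'}f$, which \emph{is} in $L^{p'}$) supplies $2^{-Dk-Dj}$, while the space mismatch~\eqref{eq:MismatchSpaceFrequencySpace} supplies a factor $\langle l-l'\rangle^{-D}$. It is this decay in $l-l'$, absent from your argument, that converts into $\|\langle x-l\rangle^{-10} f\|_{L^2}$ and thus into genuine square-summability in $l$. You should also treat $k\leq 4$ separately (the paper does this via Bernstein, gaining a factor $2^{3k/p}$), since Lemma~\ref{lem:MismatchEstimates} is only stated for $k,j\in\N$ and your decomposition does not otherwise sum over negative~$k$.
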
 	
	\begin{proof}
		We first note that in the case $k \leq 4$ we get
		\begin{align}
			\|P_k (\psi_l f)\|_{l^2_l L^{p'}_x} &\lesssim 2^{3k(1-\frac{1}{p'})} \|P_k (\psi_l f)\|_{l^2_l L^1_x} 
			\lesssim 2^{\frac{3}{p}k} \| \psi_l f \|_{l^2_l L^1_x} \lesssim 2^{\frac{3}{p}k} \| \psi_l f \|_{l^2_l L^2_x} \nonumber\\
			&\lesssim 2^{\frac{3}{p}k} \| f \|_{L^2_x}, \label{eq:EstimatePkpsilfsmallk}
		\end{align}
		where we used Bernstein's and H{\"o}lder's inequality as well as $|\supp \psi_l| \lesssim 1$ for all $l \in \Z^3$.
		Next we fix an integer $k \geq 5$. We first note that
		\begin{align}
		\label{eq:SplittingPkpsilf}
			\|P_k (\psi_l f)\|_{l^2_l L^{p'}_x} \leq \|P_k (\psi_l \tilde{P}_k f)\|_{l^2_l L^{p'}_x} 
					+ \!\! \sum_{\substack{j \in \N \\ |j - k| \geq 5}} \|P_k (\psi_l P_j f)\|_{l^2_l L^{p'}_x} 
					+ \|P_k (\psi_l P_{\leq 0} f)\|_{l^2_l L^{p'}_x}
		\end{align}
		For the first term on the right-hand side we simply apply H{\"o}lder's inequality as above to deduce
		\begin{align*}
			\|P_k (\psi_l \tilde{P}_k f)\|_{l^2_l L^{p'}_x} \lesssim  \|\psi_l \tilde{P}_k f\|_{l^2_l L^{2}_x} \lesssim \|\tilde{P}_k f\|_{L^{2}_x}.
		\end{align*}
		For $j \in \N$ with $|j - k| \geq 5$ we employ Lemma~\ref{lem:MismatchEstimates} to derive
		\begin{align*}
			&\|P_k (\psi_l P_j f)\|_{L^{p'}_x} \leq \sum_{l' \in \Z^3} \|P_k (\psi_l P_j (\psi_{l'}f))\|_{L^{p'}_x} \\
			&\lesssim \sum_{l' \in \Z^3} \| P_k (\psi_l P_j (\psi_{l'} f))\|_{L^{p'}_x }^{\frac{1}{2}} \| \psi_l P_j (\psi_{l'}  f) \|_{L^{p'}_x}^{\frac{1}{2}}
			\lesssim \sum_{l' \in \Z^3} 2^{-2k} 2^{-2j} \langle l - l' \rangle^{-20} \|\tilde{\psi}_{l'} f\|_{L^{p'}_x},
		\end{align*}
		for all $l \in \Z^3$, where $\tilde{\psi}_{l'} = \sum_{m \in \Z^3, |m - l'| \leq 4} \psi_m$ equals $1$ on the support of $\psi_l$. Using $\langle x - l \rangle \lesssim \langle l' - l \rangle$
		for all $x \in \supp \tilde{\psi}_{l'}$, H{\"o}lder's inequality, and $|\supp \tilde{\psi}_{l'}| \lesssim 1$ for all $l' \in \Z^3$, we thus obtain
		\begin{align*}
			&\|P_k (\psi_l P_j f)\|_{L^{p'}_x} \lesssim \sum_{l' \in \Z^3} 2^{-2k} 2^{-2j} \langle l - l' \rangle^{-20} \|\tilde{\psi}_{l'} f\|_{L^{2}_x} \\
			&\lesssim  \sum_{l' \in \Z^3} 2^{-2k} 2^{-2j} \langle l - l' \rangle^{-10} \|\langle x - l \rangle^{-10}  f\|_{L^{2}_x} 
			\lesssim 2^{-2k} 2^{-2j}  \|\langle x - l \rangle^{-10}  f\|_{L^{2}_x} 
		\end{align*}
		for all $l \in \Z^3$. Taking the $l^2$-norm in $l$ and summing over $j$, we arrive at
		\begin{align*}
			\sum_{\substack{j \in \N \\ |j - k| \geq 5}} \|P_k (\psi_l P_j f)\|_{l^2_l L^{p'}_x} \lesssim 2^{-2k} \|f\|_{L^2_x}.
		\end{align*}
		The same arguments also yield
		\begin{align*}
			\|P_k (\psi_l P_{\leq 0} f)\|_{l^2_l L^{p'}_x} \lesssim 2^{-2k} \|f\|_{L^2_x}.
		\end{align*}
		In view of~\eqref{eq:SplittingPkpsilf}, we have thus shown
		\begin{align}
		\label{eq:EstimatePkpsilflargek}
			\|P_k (\psi_l f)\|_{l^2_l L^{p'}_x} \lesssim \| \tilde{P}_k f\|_{L^2_x} + 2^{-2k} \|f\|_{L^2_x}
		\end{align}
		for every $k \geq 5$. Multiplying~\eqref{eq:EstimatePkpsilfsmallk} and~\eqref{eq:EstimatePkpsilflargek} with $(1+2^{2k})^{\frac{1}{2}}$ and taking the $l^2$-norm in $k$ over $k \leq 4$ respectively $k \geq 5$, the assertion now follows.
	\end{proof}
 	
 	We finally turn our attention to the randomization improved space-time estimate. The key point is that the randomization in physical space allows us to apply the dispersive estimate although the data only belongs to $L^2$.
 	\begin{prop}
 		\label{prop:RandomizationImprovedEstimatesSchroedinger}
 		Fix $T \geq 1$. Take $q,r \in [2,\infty)$ and $\mu > 0$ such that
 		\begin{align*}
 			\frac{3}{2} - \frac{1}{q} - \frac{3}{r} - \mu > 0.
 		\end{align*}
 		Take $u_+ \in H^1(\R^3)$ and let $u_+^\om$ be its physical-space randomization from~\eqref{eq:DefPhysicalSpaceRandomization}. Then there is a constant $C > 0$ such that
 		\begin{align*}
 			\| \langle \nabla \rangle \eul^{\imu t \Delta} u_+^\om \|_{L_\om^\beta L^q_{\mu} \dot{B}^0_{r,2}(I_T)} \leq C \sqrt{\beta}\, T^{-\frac{3}{2} + \frac{1}{q} + \frac{3}{r} + \mu} \|u_+\|_{H^1}
 		\end{align*}
 		for all $\beta \in [1,\infty)$.
 	\end{prop}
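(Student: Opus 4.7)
The strategy, following the template used for related randomization-improved bounds, exploits the key advantage of the physical-space randomization: each localized piece $\psi_l u_+$ lies in every $L^{r'}_x$, so the classical dispersive estimate $\|\eul^{\imu t \Delta} g\|_{L^r_x} \lesssim |t|^{-3/2 + 3/r} \|g\|_{L^{r'}_x}$ becomes available on each piece, even though $u_+$ itself only lies in $H^1$. The dispersive time decay will supply the $T^{-3/2 + 1/q + 3/r + \mu}$ factor, while Lemma~\ref{lem:LargeDeviationEstimate} together with Corollary~\ref{cor:EstimatePkpsill2kl2lLp} will reassemble the localized pieces into the $H^1$-norm of $u_+$.

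First I would reduce to the case $\beta \geq \max(q,r,2)$ by monotonicity of $L^\beta(\Omega)$ on a probability space, absorbing the bounded loss into the constant $C$. For such $\beta$, I expand
\[
P_k \eul^{\imu t \Delta} u_+^\om = \sum_{l \in \Z^3} X_l(\om)\, \eul^{\imu t \Delta} P_k(\psi_l u_+),
\]
use that $\|P_k \langle \nabla \rangle f\|_{L^r} \sim (1+2^{2k})^{1/2} \|P_k f\|_{L^r}$, and apply Minkowski (legal because $\beta \geq q \vee r \vee 2$) to move the $L^\beta_\om$ norm inside the $L^q_\mu$, $l^2_k$, and $L^r_x$ norms. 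Pointwise in $(t,x)$ the large deviation estimate of Lemma~\ref{lem:LargeDeviationEstimate} then yields a factor $\sqrt{\beta}$ and replaces the randomized sum by $\|\eul^{\imu t \Delta} P_k(\psi_l u_+)(x)\|_{l^2_l}$; one further Minkowski step swaps $L^r_x$ with $l^2_l$, leading to
\[
\|\langle \nabla \rangle \eul^{\imu t \Delta} u_+^\om\|_{L^\beta_\om L^q_\mu \dot{B}^0_{r,2}} \lesssim \sqrt{\beta}\, \Bigl\| (1+2^{2k})^{1/2} \|\eul^{\imu t \Delta} P_k(\psi_l u_+)\|_{l^2_l L^r_x}\Bigr\|_{l^2_k L^q_\mu(I_T)}.
\]

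Next I would apply the dispersive estimate to each $\eul^{\imu t \Delta} P_k(\psi_l u_+)$, which decouples the spatial and temporal variables and gives the bound
\[
\sqrt{\beta}\, \|t^{-3/2 + 3/r}\|_{L^q_\mu(I_T)} \cdot \bigl\| (1+2^{2k})^{1/2} P_k(\psi_l u_+)\bigr\|_{l^2_k l^2_l L^{r'}_x}.
\]
The hypothesis $\tfrac{3}{2} - \tfrac{1}{q} - \tfrac{3}{r} - \mu > 0$ is exactly what makes the $t$-integral convergent and yields the stated $T^{-3/2 + 1/q + 3/r + \mu}$ factor, while the remaining double-$l^2$ sum is precisely the quantity bounded by $\|u_+\|_{H^1}$ in Corollary~\ref{cor:EstimatePkpsill2kl2lLp} (applied with $p = r$, so $p' = r' \in (1,2)$). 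The only genuine technical subtlety, namely that $P_k$ does not commute with multiplication by $\psi_l$, has already been absorbed into that Corollary through the mismatch estimates of Lemma~\ref{lem:MismatchEstimates}; once $\beta$ has been reduced to the Minkowski-friendly range, the remaining argument is a mechanical chain of norm exchanges, and no further obstacle is expected.
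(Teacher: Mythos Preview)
Your proposal is correct and follows essentially the same approach as the paper's proof: reduce to large $\beta$, use Minkowski to push $L^\beta_\omega$ inside, apply the large deviation estimate (Lemma~\ref{lem:LargeDeviationEstimate}), swap $l^2_l$ outside $L^r_x$, invoke the dispersive estimate on each $\eul^{\imu t\Delta}P_k(\psi_l u_+)$, evaluate the resulting time integral, and finish with Corollary~\ref{cor:EstimatePkpsill2kl2lLp}. The only cosmetic difference is that the paper also pulls $l^2_k$ outside $L^q_\mu$ before applying the dispersive estimate, whereas you leave it inside; since the dispersive bound factors the time dependence out completely, the two orderings lead to the same expression and the distinction is immaterial.
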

 	
 	\begin{proof}
 		Since $\Omega$ is a probability space, it is enough to prove the assertion for all $\beta \in [\max\{q,r\},\infty)$.
 		Using Minkowski's inequality, Lemma~\ref{lem:LargeDeviationEstimate}, and the dispersive estimate, we deduce
 		\begin{align*}
 			&\| \langle \nabla \rangle \eul^{\imu t \Delta} u_+^\om \|_{L_\om^\beta L^q_{\mu} \dot{B}^0_{r,2}} \lesssim \Big\| \Big( \sum_{k \in \Z} (1 + 2^{2k}) \|\eul^{\imu t \Delta} P_k u_+^\om\|_{L^r_x}^2 \Big)^{\frac{1}{2}} \Big\|_{L^\beta_\om L^q_\mu} \\
 			&\lesssim  \Big\| (1 + 2^{2k})^{\frac{1}{2}} \Big\| \sum_{l \in \Z^3} X_l \eul^{\imu t \Delta} P_k (\psi_l u_+) \Big\|_{L^\beta_\om} \Big\|_{l^2_k L^q_\mu L^r_x} \\
 			&\lesssim \sqrt{\beta}  \Big\| (1 + 2^{2k})^{\frac{1}{2}}  \|  \eul^{\imu t \Delta} P_k (\psi_l u_+) \|_{l^2_l} \Big\|_{l^2_k L^q_\mu L^r_x} \\
 			&\lesssim \sqrt{\beta}  \Big\| (1 + 2^{2k})^{\frac{1}{2}}  \|  \eul^{\imu t \Delta} P_k (\psi_l u_+) \|_{L^q_\mu L^r_x} \Big\|_{l^2_k l^2_l} \\
 			&\lesssim \sqrt{\beta}  \Big\| (1 + 2^{2k})^{\frac{1}{2}}  \| t^\mu t^{-\frac{3}{2} + \frac{3}{r}} \|P_k(\psi_l u_+)\|_{L^{r'}_x} \|_{L^q(T,\infty)} \Big\|_{l^2_k l^2_l} \\
 			&\lesssim \sqrt{\beta} \, T^{-\frac{3}{2} + \frac{1}{q} + \frac{3}{r} + \mu} \| (1 + 2^{2k})^{\frac{1}{2}}   \|P_k(\psi_l u_+)\|_{l^2_k l^2_l L^{r'}_x} 
 			\lesssim \sqrt{\beta} \, T^{-\frac{3}{2} + \frac{1}{q} + \frac{3}{r} + \mu} \|u_+\|_{H^1},
 		\end{align*}
 		where we applied Corollary~\ref{cor:EstimatePkpsill2kl2lLp} in the last line.
 	\end{proof}
 	
 	\subsection{The linear half-wave equation with angular randomized data}	
 	We next show the improved space-time estimates for the half-wave group with data randomized in the angular variable as in~\eqref{eq:DefAngularRandomization}. We combine the proof of~\cite[Proposition~1.2]{BK2019} with the additional observation that this angular randomization does not only extend the range of space-time exponents $(p,q)$, but that we also obtain arbitrary high integrability in the angular variable.
 	\begin{prop}
 		\label{prop:RandomizationImprovedEstimatesHalfWave}
 		Take $p,q,s \in [2,\infty)$ such that $(p,q)$ satisfies
 		\begin{align*}
 			\frac{1}{p} + \frac{2}{q} < 1.
 		\end{align*}
 		Take $v_+ \in L^2(\R^3)$ and let $v_+^\omega$ be its angular randomization from~\eqref{eq:DefAngularRandomization}. Then there is a constant $C > 0$ such that
 		\begin{align*}
 			\| \eul^{\imu \alpha t |\nabla|} v_+^\omega \|_{L^\beta_\omega \dot{B}^{\frac{1}{p}+\frac{3}{q}-\frac{3}{2}}_{p,(q,s),2}} \leq C \sqrt{\beta} \|v_+\|_{L^2}
 		\end{align*}
 		for all $\beta \in [1,\infty)$.
 	\end{prop}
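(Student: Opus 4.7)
The plan is to follow the scheme of Proposition~1.2 in~\cite{BK2019}, sharpening it by observing that expansion in a good frame on $S^2$ delivers arbitrarily high integrability in the angular variable. Since $(\Omega,\cA,\PP)$ is a probability space, it suffices to consider $\beta \geq \max\{p,q,s\}$; smaller $\beta$ then follows from H\"older in $\omega$. By Minkowski's inequality, valid because $p,q,s\geq 2$, I move the $L^\beta_\omega$-norm past the Besov norm and the mixed Lebesgue norms, reducing the claim to block-by-block bounds
\begin{align*}
2^{m\mu} \,\|\eul^{\imu \alpha t|\nabla|} f_m^\omega\|_{L^p_t \cL^q_r L^s_\theta L^\beta_\omega} \lesssim \sqrt{\beta}\,\|P_m v_+\|_{L^2},
\end{align*}
with $\mu=\tfrac{1}{p}+\tfrac{3}{q}-\tfrac{3}{2}$, summed in $\ell^2_m$. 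The rescaling $f_m^\omega(\cdot)=g_m^\omega(2^m\cdot)$ together with the substitution $\tilde t=2^m t$, $\tilde x=2^m x$ absorbs the factor $2^{m\mu}$ and reduces matters to the corresponding estimate for the frequency-one function $g_m^\omega$.

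Next I insert the representation~\eqref{eq:Randomizationgm}. Since $\eul^{\imu \alpha t|\nabla|}$ is a radial Fourier multiplier, it preserves each spherical-harmonic eigenspace, so
\begin{align*}
\eul^{\imu \alpha t|\nabla|} g_m^\omega(r\theta) = \sum_{k,l} Y^m_{k,l}(\omega)\, a_k \,r^{-1/2} b_{k,l}(\theta)\, H_{k,l,m}(t,r),
\end{align*}
where $H_{k,l,m}(t,r)=\int_0^\infty \hat{c}^m_{k,l}(\rho)\, \eul^{\imu \alpha t\rho} J_{(2k+1)/2}(r\rho)\,\rho^{3/2}\dd\rho$. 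Applying Lemma~\ref{lem:LargeDeviationEstimate} pointwise in $(t,r,\theta)$ and then Minkowski in $L^{p/2}_t \cL^{q/2}_r L^{s/2}_\theta$ (using $p,q,s\geq 2$), the angular factor decouples and I obtain
\begin{align*}
\|\eul^{\imu \alpha t|\nabla|} g_m^\omega\|_{L^p_t \cL^q_r L^s_\theta L^\beta_\omega} \lesssim \sqrt{\beta}\,\Big(\sum_{k,l} \|b_{k,l}\|_{L^s(S^2)}^2 \,\|r^{-1/2} H_{k,l,m}\|_{L^p_t \cL^q_r}^2\Big)^{1/2}.
\end{align*}

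The angular factor is then controlled by $\|b_{k,l}\|_{L^s(S^2)}\lesssim \sqrt{s}$ from~\eqref{eq:LqBoundGoodFrame}, uniformly in $k,l$; this is the new contribution that makes the exponent $s$ arbitrary at the cost of a harmless constant depending only on $s$. For the radial factor $\|r^{-1/2}H_{k,l,m}\|_{L^p_t \cL^q_r}$ I would invoke the spherically averaged Strichartz estimate for the half-wave equation in the extended range $\tfrac1p+\tfrac2q<1$ applied to the single-mode function $G_{k,l,m}= a_k r^{-1/2} b_{k,l}(\theta) F_{k,l,m}(r)$ at frequency one (this is the Klainerman--Machedon/Sterbenz-type improvement used in~\cite{BK2019}), which yields
\begin{align*}
\|r^{-1/2} H_{k,l,m}\|_{L^p_t \cL^q_r} \lesssim \|\hat{c}^m_{k,l}\|_{L^2(r^2\dd r)}
\end{align*}
with a constant uniform in $k,l$. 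Summing in $(k,l)$ via~\eqref{eq:L2Normgm} gives $\sqrt{\beta}\sqrt{s}\,\|g_m\|_{L^2}\lesssim \sqrt{\beta}\sqrt{s}\,\|P_m v_+\|_{L^2}$, and a final $\ell^2_m$-sum completes the argument, with $\sqrt{s}$ absorbed into $C$.

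The hard part will be the radial Strichartz input: locating or assembling the single-spherical-mode half-wave estimate in the extended exponent range $\tfrac1p+\tfrac2q<1$ with a constant independent of the eigenvalue index $k$. Everything else is randomization bookkeeping: Minkowski swaps, Lemma~\ref{lem:LargeDeviationEstimate}, the good-frame bound $\|b_{k,l}\|_{L^s}\lesssim \sqrt{s}$, and Plancherel.
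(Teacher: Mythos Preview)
Your approach is essentially the paper's: the reduction to unit frequency, the large-deviation step, the use of the good-frame bound~\eqref{eq:LqBoundGoodFrame} to decouple the angular variable (this is indeed the new observation), and the identification of the single-mode radial Strichartz estimate as the key analytic input all match. The step you flag as the hard part---the bound $\|r^{-1/2}H_{k,l,m}\|_{L^p_t\cL^q_r}\lesssim \|\hat c^m_{k,l}\|_{\cL^2}$ with constant uniform in $k$---is exactly what the paper carries out explicitly rather than citing as a black box: it expands $\hat c^m_{k,l}$ (supported in $\rho\in(\tfrac12,2)$) as a Fourier series $\sum_n c^{m,n}_{k,l}\eul^{\imu\pi n\rho/2}$, invokes Sterbenz's pointwise asymptotics~\cite[Proposition~4.1]{S2005} for the resulting oscillatory Bessel integrals to get an $L^\infty_r$ bound of the shape $(\sum_n |c^{m,n}_{k,l}|^2/(1+|t+\tfrac{\pi}{2}n|)^2)^{1/2}$, interpolates this against the trivial $\cL^2_r$ energy bound to reach $\cL^q_r$, and then the $L^p_t$ integration closes precisely under $\tfrac1p+\tfrac2q<1$.
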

 	
 	\begin{proof}
 	Using that $\Omega$ is a probability space, we only have to show the assertion for $\beta \in [\max\{p,q,s\},\infty)$.
 	
 		The constant $\alpha$ is irrelevant for the proof so that we set $\alpha = 1$ to ease the notation a bit.
 		We again write $g_m = (P_m v_+)(2^{-m} \cdot)$ for the rescaled Littlewood-Paley blocks of $v_+$ used in the definition of the angular randomization.
 		
 		By scaling and an application of Minkowski's inequality it is enough to show
 		\begin{equation}
 		\label{eq:EstimateAngularRandomizationScalingReduced}
 			\| \eul^{\imu t |\nabla|} g_m^\omega \|_{L^\beta_\omega L^p_t \cL_r^q L^s_\theta} \leq C \sqrt{\beta} \|g_m\|_{L^2_x}
 		\end{equation}
 		for all $m \in \Z$, where $g_m^\omega$ is defined in~\eqref{eq:gmInGoodFrame}. Fix $m \in \Z$.
 		
 		In view of~\eqref{eq:gmhatInGoodFrame}, \eqref{eq:gmInGoodFrame}, and~\eqref{eq:Randomizationgm}, we have the representation
 		\begin{align*}
 			\eul^{\imu t |\nabla|} g_m^\omega(r \theta) = \sum_{k,l} a_k r^{-\frac{1}{2}} Y^m_{k,l}(\omega) b_{k,l}(\theta) \int_0^\infty \eul^{\imu t \rho }\hat{c}^m_{k,l}(\rho) J_{\frac{2k+1}{2}}(r \rho) \rho^{\frac{3}{2}} \dd \rho
 		\end{align*}
 		with $r \in (0,\infty)$ and $\theta \in S^2$. Here and in the following we write $\sum_{k,l}$ for the sum over $l \in \{1, \ldots, N_k\}$ and $k \in \N_0$ as well as $\|\cdot\|_{l^2_{k,l}}$ for the corresponding $l^2$-norm.
 		
 		Using Minkowski's inequality, Lemma~\ref{lem:LargeDeviationEstimate}, and property~\eqref{eq:LqBoundGoodFrame} of the good frame, we obtain
 		\begin{align}
 			\| \eul^{\imu t |\nabla|} g_m^\omega \|_{L^\beta_\omega L^p_t \cL_r^q L^s_\theta}
 			&\lesssim \sqrt{\beta} \, \Big\| r^{-\frac{1}{2}} b_{k,l}(\theta) \int_0^\infty \eul^{\imu t \rho } \hat{c}^m_{k,l}(\rho) J_{\frac{2k+1}{2}}(r \rho) \rho^{\frac{3}{2}} \dd \rho \Big\|_{L^p_t \cL_r^q L^s_\theta l^2_{k,l}} \nonumber\\
 			&\lesssim \sqrt{\beta} \, \Big\| r^{-\frac{1}{2}} \int_0^\infty \eul^{\imu t \rho } \hat{c}^m_{k,l}(\rho) J_{\frac{2k+1}{2}}(r \rho) \rho^{\frac{3}{2}} \dd \rho \Big\|_{ l^2_{k,l} L^p_t \cL_r^q}, \label{eq:EstimateLargeDeviationGoodFrame}
 		\end{align}
 		where the implicit constant is independent of $\beta$. Next we recall that $g_m$ has unit frequency so that $\hat{c}^m_{k,l}$ is supported in $\{\rho \in (0,\infty) \colon \frac{1}{2} < \rho < 2\}$ for all $l,k$. We can thus write $\hat{c}^m_{k,l}$ as a Fourier series in $L^2(0,4)$, which yields
 		\begin{equation}
 			\label{eq:chatmklAsFourierSeries}
 			\hat{c}^m_{k,l}(\rho) = \sum_{n \in \Z} c^{m,n}_{k,l} \eul^{\imu \frac{\pi}{2} n \rho}.
 		\end{equation}
 		Consequently, we get
 		\begin{align*}
 			r^{-\frac{1}{2}} \int_0^\infty \eul^{\imu t \rho } \hat{c}^m_{k,l}(\rho) J_{\frac{2k+1}{2}}(r \rho) \rho^{\frac{3}{2}} \dd \rho
 			= \sum_{n \in \Z}  c^{m,n}_{k,l} r^{-\frac{1}{2}} \psi_{t + \frac{\pi}{2}n}^k(r)
 		\end{align*}
 		with
 		\begin{align*}
 			\psi^k_{t + \frac{\pi}{2}n}(r) = \int_0^\infty \eul^{\imu (t + \frac{\pi}{2}n) \rho }   J_{\frac{2k+1}{2}}(r \rho) \chi(\rho) \dd \rho,
 		\end{align*}
 		where we absorbed $\rho^{\frac{3}{2}}$ in the smooth bump function $\chi$ which is supported in $(0,4)$. The asymptotic properties of the functions $\psi_{t+\frac{\pi}{2}n}^k$ derived in Proposition~4.1 in~\cite{S2005} yield
 		\begin{align*}
 			&\Big|\sum_{n \in \Z}  r^{-\frac{1}{2}} \psi_{t + \frac{\pi}{2}n}^k(r)\Big| \\
 			&\lesssim \sum_{n \in \Z} \frac{1}{(1+|t + \frac{\pi}{2} n|)(1+|r-|t + \frac{\pi}{2}n||)^{\frac{1}{2}}} \! \left[\frac{1}{(1+|r-|t + \frac{\pi}{2}n||)^{\frac{1}{2}}} + R(k, t + \tfrac{\pi}{2}n,r) \right]
 		\end{align*}
 		with
 		\begin{align*}
 			\sum_{n \in \Z} \frac{1}{1+|r-|t + \frac{\pi}{2}n||} R^2(k, t + \tfrac{\pi}{2}n,r) \lesssim 1,
 		\end{align*}
 		where the implicit constants are independent of $l,k$ and $r$. See~(5.11), (5.12), and~(5.13) in~\cite{S2005} for the details.
 		
 		Applying the Cauchy-Schwarz inequality, we thus get
 		\begin{align*}
 			\Big\| \sum_{n \in \Z} c^{m,n}_{k,l} r^{-\frac{1}{2}} \psi_{t + \frac{\pi}{2}n}^k(r) \Big\|_{L^\infty_r} \lesssim \left(\sum_{n \in \Z} \frac{|c^{m,n}_{k,l}|^2}{(1+|t + \frac{\pi}{2} n|)^2} \right)^{\frac{1}{2}}
 		\end{align*}
 		and therefore
 		\begin{equation}
 			\label{eq:EstimateLinftyInr}
 			\Big\| r^{-\frac{1}{2}} \int_0^\infty \eul^{\imu t \rho } \hat{c}^m_{k,l}(\rho) J_{\frac{2k+1}{2}}(r \rho) \rho^{\frac{3}{2}} \dd \rho \Big\|_{L^\infty_r} \lesssim \left(\sum_{n \in \Z} \frac{|c^{m,n}_{k,l}|^2}{(1+|t + \frac{\pi}{2} n|)^2} \right)^{\frac{1}{2}}.
 		\end{equation}
 		On the other hand, the energy estimate implies
 		\begin{align}
 			\label{eq:EstimateL2Inr}
 			&\Big\| r^{-\frac{1}{2}} \int_0^\infty \eul^{\imu t \rho } \hat{c}^m_{k,l}(\rho) J_{\frac{2k+1}{2}}(r \rho) \rho^{\frac{3}{2}} \dd \rho \Big\|_{\cL^2_r} \nonumber\\
 			&= \Big\| b_{k,l}(\theta) r^{-\frac{1}{2}} \int_0^\infty \eul^{\imu t \rho } \hat{c}^m_{k,l}(\rho) J_{\frac{2k+1}{2}}(r \rho) \rho^{\frac{3}{2}} \dd \rho \Big\|_{L^2_x} \lesssim \|\hat{c}^m_{k,l}(\rho) b_{k,l}(\theta)\|_{L^2_\xi} \nonumber \\
 			&\lesssim \|\hat{c}^m_{k,l}(\rho)\|_{\cL^2_{\rho}} \lesssim  \|\hat{c}^m_{k,l}(\rho)\|_{L^2_{\rho}} \lesssim \Big(\sum_{n \in \Z} |c^{m,n}_{k,l}|^2 \Big)^{\frac{1}{2}},
 		\end{align}
 		where we again used that $\supp \hat{c}^m_{k,l} \subseteq \{\rho \in (0,\infty) \colon \frac{1}{2} < \rho < 2\}$ and Plancherel's theorem.
 		
 		Interpolating between~\eqref{eq:EstimateLinftyInr} and~\eqref{eq:EstimateL2Inr} for fixed time $t$ (see e.g.~\cite{BL1976}), we infer
 		\begin{align*}
 			&\Big\| r^{-\frac{1}{2}} \int_0^\infty \eul^{\imu t \rho } \hat{c}^m_{k,l}(\rho) J_{\frac{2k+1}{2}}(r \rho) \rho^{\frac{3}{2}} \dd \rho \Big\|_{\cL^q_r} \lesssim \left\| \frac{|c^{m,n}_{k,l}|}{(1+|t + \frac{\pi}{2} n|)^{(1 - \frac{2}{q})}} \right\|_{l^2_n}.
 		\end{align*}
 		Hence, using Minkowski's inequality and the assumption $\frac{1}{p} + \frac{2}{q} < 1$, we get
 		\begin{align*}
 			&\Big\| r^{-\frac{1}{2}} \int_0^\infty \eul^{\imu t \rho } \hat{c}^m_{k,l}(\rho) J_{\frac{2k+1}{2}}(r \rho) \rho^{\frac{3}{2}} \dd \rho \Big\|_{L^p_t \cL^q_r} \lesssim \| c^{m,n}_{k,l} \|_{l^2_n} 
 			\lesssim \| \hat{c}^m_{k,l} \|_{L^2} \lesssim \| \hat{c}^m_{k,l} \|_{\cL^2}.
 		\end{align*}
	Inserting this estimate into~\eqref{eq:EstimateLargeDeviationGoodFrame} and employing~\eqref{eq:L2Normgm}, we finally conclude
	\begin{align*}
		\| \eul^{\imu t |\nabla|} g_m^\omega \|_{L^\beta_\omega L^p_t \cL_r^q L^r_\theta}
 			&\lesssim \sqrt{\beta} \, \| \hat{c}^m_{k,l} \|_{l^2_{k,l} \cL^2} \lesssim \sqrt{\beta} \, \|g_m\|_{L^2_x},
	\end{align*}
	implying the assertion.
 	\end{proof}
 
 \section{Multilinear estimates for the Zakharov system}
 \label{sec:MultilinearEstimates}
 
	We now prove the multilinear estimates for the nonlinear terms appearing in~\eqref{eq:ZakharovSystemNormalForm}. In view of the application of these estimates in the proof of Theorem~\ref{thm:MainResult}, we decompose $u = \linschr + \nlschr$ and $v = \linwave + \nlwave$, where $\linschr$ and $\linwave$ are linear solutions of the Schr{\"o}dinger respectively half-wave equation. Note that $\linschr$ and $\linwave$ will be linear solutions with randomized data satisfying the improved bounds from Section~\ref{sec:LinearRandomizedEstimates} when we apply these estimates. In particular, we use different norms for $\linschr$ and $\nlschr$ respectively $\linwave$ and $\nlwave$ so that the different interactions often have to be treated separately.

 \subsection{Boundary terms}
 
In order to estimate the boundary terms $\bdyop$, we introduce the bilinear operators
\begin{equation}
	\label{eq:DefTm}
	T_m(f,g)(x) = \int_{\R^6} m(\xi,\eta) \hat{f}(\xi) \hat{g}(\eta) \eul^{\imu x (\xi + \eta)} \dd \xi \dd \eta \qquad (x \in \R^3)
\end{equation}
for $m \in L^{\infty}(\R^6)$ and $f,g \in \Schw(\R^3)$.

The following Coifman-Meyer-type bilinear multiplier estimate was proven in~\cite[Lemma~3.5]{GN2014}.

\begin{lem}
	\label{lem:BilinearMultiplierEstimate}
	Let  $m \in C^{\infty}(\R^{6})$ be bounded and assume that there are constants $C_{\alpha,\beta} > 0$ such that
	\begin{equation*}
		|\partial^\alpha_\xi \partial^\beta_\eta m(\xi,\eta)| \leq C_{\alpha,\beta} |\xi|^{-|\alpha|} |\eta|^{-|\beta|}
	\end{equation*}
	for all $\xi,\eta \in \R^3$ and $\alpha,\beta \in \N_0^3$. Take $p,q,r \in [1,\infty]$ with $\frac{1}{r} = \frac{1}{p} + \frac{1}{q}$.
	
	Then there is a constant $C > 0$ such that
	\begin{equation*}
		\|T_m(P_{k_1} f, P_{k_2} g) \|_{L^r} \leq C \|P_{k_1} f\|_{L^p} \|P_{k_2} g\|_{L^q}
	\end{equation*}
	for all $f \in L^p(\R^3)$, $g \in L^q(\R^3)$, and $k_1, k_2 \in \Z$, where $T_m$ is the operator defined in~\eqref{eq:DefTm}.
\end{lem}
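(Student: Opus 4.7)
The plan is to follow the standard Coifman--Meyer strategy: localize the symbol to the relevant dyadic annuli, rescale to unit frequency, expand the resulting symbol in a Fourier series, and convert the bilinear operator into an absolutely summable series of translates of the product $P_{k_1}f \cdot P_{k_2}g$, each of which is controlled by Hölder's inequality.

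More concretely, fix $k_1, k_2 \in \Z$ and choose even bump functions $\tilde\rho_{k_i} \in C_c^\infty(\R^3)$ that equal $1$ on $\supp \rho_{k_i}$ and are supported in slightly larger annuli $\{|\xi| \sim 2^{k_i}\}$. Since the Littlewood--Paley projections already cut the frequency supports, I may freely insert $\tilde\rho_{k_1}(\xi)\tilde\rho_{k_2}(\eta)$ into the integral defining $T_m(P_{k_1}f, P_{k_2}g)$. I then rescale via $\xi = 2^{k_1}\xi'$, $\eta = 2^{k_2}\eta'$ and set
\[
  m_{k_1,k_2}(\xi',\eta') = m(2^{k_1}\xi', 2^{k_2}\eta')\,\tilde\rho_0(\xi')\tilde\rho_0(\eta'),
\]
where $\tilde\rho_0$ is the rescaled cutoff living on a fixed annulus independent of $k_i$. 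The hypothesis $|\partial^\alpha_\xi \partial^\beta_\eta m| \lesssim |\xi|^{-|\alpha|}|\eta|^{-|\beta|}$ is precisely scale-invariant, so after the change of variables every derivative of $m_{k_1,k_2}$ is bounded uniformly in $k_1, k_2$, and $m_{k_1,k_2}$ is supported in a fixed compact set.

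Next I extend $m_{k_1,k_2}$ periodically on a sufficiently large cube $[-N\pi, N\pi]^6$ (with $N$ chosen once and for all so that the cube contains the supports) and expand it in a Fourier series,
\[
  m_{k_1,k_2}(\xi',\eta') = \sum_{n_1, n_2 \in \Z^3} c^{k_1,k_2}_{n_1,n_2}\, \eul^{\imu (n_1 \cdot \xi' + n_2 \cdot \eta')/N}.
\]
Repeated integration by parts together with the uniform derivative bounds shows $|c^{k_1,k_2}_{n_1,n_2}| \lesssim_M \langle n_1\rangle^{-M}\langle n_2\rangle^{-M}$ for any $M$, with implicit constant independent of $k_1, k_2$. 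Undoing the rescaling, I obtain
\[
  T_m(P_{k_1}f, P_{k_2}g)(x) = \sum_{n_1, n_2 \in \Z^3} c^{k_1,k_2}_{n_1,n_2}\, P_{k_1}f\bigl(x + n_1/(N 2^{k_1})\bigr) \, P_{k_2}g\bigl(x + n_2/(N 2^{k_2})\bigr),
\]
since $\tilde\rho_{k_i}$ acts as the identity on $P_{k_i}$.

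Finally, for each term I apply Hölder's inequality with exponents $p, q, r$ satisfying $\frac{1}{r} = \frac{1}{p}+\frac{1}{q}$ together with the translation invariance of the $L^p$-norms, obtaining
\[
  \bigl\| P_{k_1}f(\cdot + \tau_1) \, P_{k_2}g(\cdot + \tau_2) \bigr\|_{L^r} \le \|P_{k_1}f\|_{L^p}\|P_{k_2}g\|_{L^q}.
\]
Summing in $n_1, n_2$ using the rapid decay of $c^{k_1,k_2}_{n_1,n_2}$ yields the claim. The only slightly delicate point is verifying that the rescaled cutoff and derivative estimates produce constants independent of $k_1, k_2$, which is exactly where the scale-invariant form of the symbol hypothesis is essential; once that is handled, everything else is routine.
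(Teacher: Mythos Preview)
Your proof is correct and follows the standard Coifman--Meyer strategy of localizing, rescaling to unit frequency, expanding in a Fourier series, and reducing to H\"older on translated products. Note that the paper does not actually prove this lemma: it is quoted from \cite[Lemma~3.5]{GN2014}, whose proof is exactly the argument you outline, so there is nothing to compare.
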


We also use $\|\langle \nabla \rangle P_k f\|_{\cL_r^q L^s_\theta} \sim (1+2^{2k})^{\frac{1}{2}} \|P_k f\|_{\cL^q_r L^s_\theta}$ without further reference in the following, see~\cite{GLNW2014}.

Lemma~\ref{lem:BilinearMultiplierEstimate} can be applied to the bilinear operator $\bdyop$. It shows that, roughly speaking, $\bdyop$ acts like 
\begin{align*}
	 \bdyop(f,g) \sim |\nabla|^{-1} \langle \nabla \rangle^{-1} (fg)_{XL}.
\end{align*}

We obtain the following estimates for the boundary terms.
\begin{lem}
	\label{lem:EstimateBoundaryTerms}
	Fix $T \geq 1$ and let $0 < \eta \leq r$. Take $u_0, v_0 \in L^2(\R^3)$ with $\|u_0\|_{L^2} + \|v_0\|_{L^2} \leq r$ such that $\linschr(t) = \eul^{\imu t \Delta} u_0$ and $\linwave(t) = \eul^{\imu \alpha t |\nabla|} v_0$ satisfy
	\begin{align*}
		&\| \langle \nabla \rangle \linschr \|_{L^{\frac{2}{\nu}}_\si \dot{B}^0_{3,2}(I_T)} + \| \langle \nabla \rangle \linschr \|_{L^{2}_\si \dot{B}^0_{6,2}(I_T)} + \|\linwave\|_{\dot{B}^{-\frac{1}{2}}_{4,4,2}(I_T)} + \|\linwave\|_{\dot{B}^{-\frac{1}{2} + \frac{4}{3}\ep}_{q(\ep),q(\ep),2}(I_T)} \leq \eta.
	\end{align*}
	We then have the estimates
	\begin{align}
		\| \bdyop(\linwave, \linschr)\|_{\schrsp} &\lesssim \eta r, \label{eq:BoundaryTermlinlin}\\
		\| \bdyop(\nlwave, \linschr)\|_{\schrsp} &\lesssim \eta \| \nlwave \|_{L^\infty L^2}, \label{eq:BoundaryTermnllin}\\
		\| \bdyop(\linwave, \nlschr)\|_{\schrsp} &\lesssim \eta \| \langle \nabla \rangle \nlschr \|_{L^\infty_\si L^2 \cap L^2_\si \dot{B}^0_{6,2}}, \label{eq:BoundaryTermlinnl}\\
		\| \bdyop(\nlwave, \nlschr)\|_{\schrsp} &\lesssim T^{-\si} \| \nlwave \|_{L^\infty_\si L^2} \| \langle \nabla \rangle \nlschr \|_{L^\infty_\si L^2 \cap L^2_\si \dot{B}^0_{6,2}} \label{eq:BoundaryTermnlnl}
	\end{align}
	for all $\nlschr \in L^\infty_\sigma(I_T, H^1(\R^3)) \cap \langle \nabla \rangle^{-1}L^2_\sigma(I_T, \dot{B}^0_{6,2}(\R^3))$ and $\nlwave \in L^\infty_\sigma(I_T, L^2(\R^3))$.
\end{lem}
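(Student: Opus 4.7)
The plan is to reduce every estimate to the bilinear multiplier estimate of Lemma~\ref{lem:BilinearMultiplierEstimate} applied to the symbol $\cP_{XL}/\resf$. On the support of $\cP_{XL}$ one has $|\xi-\eta|\sim 2^k$ with $|k-\log_2\alpha|>4$ and $|\eta|\lesssim 2^{k-5}$, so $|\resf|\sim 2^{2k}$ when $2^k\gg\alpha$ and $|\resf|\sim \alpha\, 2^k$ when $2^k\ll\alpha$. Consequently $\cP_{XL}/\resf$ verifies the Coifman--Meyer hypothesis and $\bdyop$ effectively behaves like $|\nabla|^{-1}\langle\nabla\rangle^{-1}$ applied to the $XL$-projected product. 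In particular, for every H\"older triple $\frac{1}{r}=\frac{1}{p}+\frac{1}{q}$ and every $k\in\Z$ with $|k-\log_2\alpha|>4$,
\begin{equation*}
    \|P_k\bdyop(F,G)\|_{L^r}\lesssim 2^{-k}(1+2^k)^{-1}\|P_k F\|_{L^p}\,\|P_{\leq k-5}G\|_{L^q},
\end{equation*}
together with an analogous bound in the mixed radial--angular spaces $\cL^q_r L^s_\theta$; this is the workhorse for all four estimates.

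Given this reduction, I would bound each of the three constituents of $\|\cdot\|_{\schrsp}$ separately. For the $L^\infty_\sigma H^1$ piece one applies the workhorse pointwise in time with H\"older pairs such as $(p,q)=(3,6)$, squares the $2^{-k}$-weighted estimate and sums in $\ell^2_k$; time $L^\infty$ control on the wave factor is extracted from Lemma~\ref{lem:SobolevSchroedingerWave}(ii), which upgrades $\dot{B}^{-1/2}_{4,4,2}$ to $L^\infty_t\dot B^{-3/4}_{4,2}$, combined with Bernstein to reach $L^3_x$; for $\linschr$ one passes to $L^\infty_\sigma$ via Lemma~\ref{lem:SobolevSchroedingerWave}(i) using the hypothesis $\|\langle\nabla\rangle\linschr\|_{L^{2/\nu}_\sigma\dot B^0_{3,2}}$. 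For the $L^2_\sigma\dot B^0_{6,2}$ piece one fixes the Schr\"odinger factor in $\cL^6_x$ via its given bound and places the wave factor in $L^\infty_t L^3_x$. For the anisotropic component $L^2_\sigma\dot B^{1/4+\ep}_{(q(\ep),2/(1-\nu)),2}$ one runs the same scheme in $\cL^{q(\ep)}_r L^{2/(1-\nu)}_\theta$; the wave hypothesis $\|\linwave\|_{\dot B^{-1/2+4\ep/3}_{q(\ep),q(\ep),2}}$ is precisely tuned to match the radial exponent $q(\ep)$ after anisotropic H\"older.

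Combining linear and nonlinear pieces is then essentially bookkeeping. Whenever $F=\nlwave$ one substitutes $\|\nlwave\|_{L^\infty_\sigma L^2}$, and whenever $G=\nlschr$ the relevant $\schrsp$-norm; Bernstein absorbs the Lebesgue exponent change on $P_{\leq k-5}G$. The time weights close as follows: in~\eqref{eq:BoundaryTermlinlin} the full $t^\sigma$ weight is supplied by $\linschr$ since the norms on $\linwave$ are unweighted; in~\eqref{eq:BoundaryTermnllin} and~\eqref{eq:BoundaryTermlinnl} the nonlinear factor carries the weight; in~\eqref{eq:BoundaryTermnlnl} both nonlinear factors carry $t^\sigma$, so their product carries $t^{2\sigma}$, which we rewrite using $t\geq T$ as $T^{-\sigma}\cdot t^\sigma$ to recover the advertised $T^{-\sigma}$ prefactor.

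The main obstacle is the anisotropic target space. Lemma~\ref{lem:BilinearMultiplierEstimate} is stated for isotropic Lebesgue spaces, so to land in $\cL^{q(\ep)}_r L^{2/(1-\nu)}_\theta$ one needs a mixed-norm variant. This can be obtained from the physical-space kernel representation of Coifman--Meyer operators, or equivalently by first performing the $XL$ Littlewood--Paley decomposition and then estimating the pointwise product $P_k F\cdot P_{\leq k-5}G$ via anisotropic H\"older. Verifying that the exponents $q(\ep)$ and $\frac{2}{1-\nu}$ are compatible with the available hypotheses on $\linwave$, and that the Bernstein losses on the low-frequency Schr\"odinger factor remain summable in $\ell^2_k$, is where most of the detailed bookkeeping lives.
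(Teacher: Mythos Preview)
Your approach is essentially the same as the paper's: reduce to Lemma~\ref{lem:BilinearMultiplierEstimate} via the observation that $\bdyop\sim|\nabla|^{-1}\langle\nabla\rangle^{-1}(\cdot)_{XL}$, treat the three constituents of $\schrsp$ separately, and use Lemma~\ref{lem:SobolevSchroedingerWave} to convert the space--time hypotheses on $\linschr,\linwave$ into $L^\infty_t$ control. The time-weight bookkeeping you describe is exactly what the paper does.

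The one place your outline diverges is the anisotropic component, and here you are making your life harder than necessary. You flag a ``mixed-norm Coifman--Meyer variant'' as the main obstacle, but the paper sidesteps this entirely: since $S^2$ has finite measure and $\frac{2}{1-\nu}<q(\ep)$, H\"older on the sphere gives
\[
\|P_k\,\langle\nabla\rangle\bdyop(v,u)\|_{\cL^{q(\ep)}_r L^{2/(1-\nu)}_\theta}\lesssim \|P_k\,\langle\nabla\rangle\bdyop(v,u)\|_{L^{q(\ep)}},
\]
after which the \emph{isotropic} Lemma~\ref{lem:BilinearMultiplierEstimate} applies directly with the pair $(L^{q(\ep)},L^\infty)$. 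This places $v$ in $L^\infty_t\dot B^{-3/4+\ep}_{q(\ep),2}$, which for $v=\linwave$ is reached from the hypothesis $\|\linwave\|_{\dot B^{-1/2+4\ep/3}_{q(\ep),q(\ep),2}}$ via Lemma~\ref{lem:SobolevSchroedingerWave}\ref{it:SobolevWave}, and for $v=\nlwave$ follows from Sobolev embedding out of $L^2$. Your alternative of proving an anisotropic bilinear multiplier bound from the kernel representation would likely work, but it is unnecessary extra machinery; note also that your second suggestion (anisotropic H\"older on the pointwise product $P_kF\cdot P_{\leq k-5}G$) is not directly applicable because $\bdyop$ is a genuine bilinear Fourier multiplier, not a pointwise product, so you cannot split the factors in $\cL^q_rL^s_\theta$ without first knowing the operator is bounded there.
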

	
	\begin{proof}
		We first note that $|\nabla| \langle \nabla \rangle \bdyop(\cdot \,, \cdot)$ is a bilinear multiplier whose symbol
		\begin{align*}
			m(\xi,\eta) = \frac{|\xi + \eta| \langle \xi + \eta \rangle \sum_{k \nsim \log_2 \alpha} \rho_k(\xi) \rho_{\leq k-5}(\eta)}{|\xi + \eta|^2 + \alpha |\xi| - |\eta|^2}
		\end{align*}
		satisfies the assumptions of Lemma~\ref{lem:BilinearMultiplierEstimate}, as a straightforward computation shows. 
		
		We start by proving~\eqref{eq:BoundaryTermlinlin}-\eqref{eq:BoundaryTermnlnl} for the $L^\infty_\si H^1$-component of the $\schrsp$-norm. For fixed $t \in [T, \infty)$, we employ dyadic decomposition, Sobolev's embedding and Lemma~\ref{lem:BilinearMultiplierEstimate} to estimate
		\begin{align*}
			&\| \langle \nabla \rangle \bdyop(v(t), \linschr(t)) \|_{L^2} 
			\lesssim \Big(\sum_{k \in \Z} \|\langle \nabla \rangle \bdyop(P_k v(t), P_{\leq k-5} \linschr(t))\|_{L^2}^2 \Big)^{\frac{1}{2}} \\
			&\lesssim \Big(\sum_{k \in \Z} \Big(\sum_{k_1 \leq k-5} \| |\nabla| \langle \nabla \rangle \bdyop(P_k v(t), P_{k_1} \linschr(t))\|_{L^{\frac{6}{5}}} \Big)^2 \Big)^{\frac{1}{2}} \\
			&\lesssim \Big(\sum_{k \in \Z} \Big(\sum_{k_1 \leq k-5} \|P_k v(t)\|_{L^{2}} \|P_{k_1} \linschr(t)\|_{L^3} \Big)^2 \Big)^{\frac{1}{2}} \\
			&\lesssim \|v(t)\|_{L^2} \sum_{k_1 \in \Z} (1+2^{2k_1})^{-\frac{1}{2}}  \|\langle \nabla \rangle P_{k_1} \linschr(t)\|_{L^3}.
		\end{align*}
		Taking the $L^\infty_\si$-norm and employing Lemma~\ref{lem:SobolevSchroedingerWave}~\ref{it:SobolevSchroedinger}, we thus obtain
		\begin{align*}
			&\|\bdyop(v, \linschr)\|_{L^\infty_\si H^1} \lesssim \|v\|_{L^\infty L^2} \sum_{k_1 \in \Z} (1+2^{2k_1})^{-\frac{1}{2}}  \|\langle \nabla \rangle P_{k_1} \linschr\|_{L^\infty_\si L^3} \\
			&\lesssim \|v\|_{L^\infty L^2} \sum_{k_1 \in \Z} (1+2^{2k_1})^{-\frac{1}{2}} 2^{k_1 \nu}  \|\langle \nabla \rangle P_{k_1} \linschr\|_{L^{\frac{2}{\nu}}_\si L^3}  \lesssim \eta \|v\|_{L^\infty L^2}.
		\end{align*}
		Setting $v = \linwave$ respectively $v = \nlwave$, we obtain the $L^\infty_\si H^1$-estimate in~\eqref{eq:BoundaryTermlinlin} and~\eqref{eq:BoundaryTermnllin}. Using dyadic decomposition and Lemma~\ref{lem:BilinearMultiplierEstimate} again, we get
		\begin{align*}
			&\| \langle \nabla \rangle \bdyop(v(t), \nlschr(t)) \|_{L^2} 
			\lesssim \Big(\sum_{k \in \Z} \|\langle \nabla \rangle \bdyop(P_k v(t), P_{\leq k-5} \nlschr(t))\|_{L^2}^2 \Big)^{\frac{1}{2}}  \\
			&\lesssim  \Big(\sum_{k \in \Z} \Big(\sum_{k_1 \leq k-5} \| |\nabla|^{-1} P_k v(t)\|_{L^6} \| P_{k_1} \nlschr(t))\|_{L^3}\Big)^2 \Big)^{\frac{1}{2}}
			\lesssim \| v(t) \|_{\dot{B}^{-1}_{6,2}} \|\nlschr(t)\|_{H^1}
		\end{align*}
		for fixed $t \in [T,\infty)$. Since $\| \nlwave(t) \|_{\dot{B}^{-1}_{6,2}} \lesssim \|\nlwave(t)\|_{L^2}$, we obtain
		\begin{align}
		\label{eq:EstimateOmeganlnlLinftyL2}
			&\| \bdyop(\nlwave, \nlschr) \|_{L^\infty_\si H^1} 
			\lesssim \|\nlwave\|_{L^\infty L^2} \| \nlschr \|_{L^\infty_\si H^1} \lesssim T^{-\si} \|\nlwave\|_{L^\infty_\si L^2} \|\nlschr\|_{L^\infty_\si H^1},
		\end{align}
		while $\| \linwave(t) \|_{\dot{B}^{-1}_{6,2}} \lesssim \| \linwave(t) \|_{\dot{B}^{-\frac{3}{4}}_{4,2}}$ combined with Lemma~\ref{lem:SobolevSchroedingerWave}~\ref{it:SobolevWave} yields
		\begin{align}
			\| \bdyop(\linwave, \nlschr) \|_{L^\infty_\si H^1} 
			&\lesssim \|\linwave\|_{L^\infty \dot{B}^{-\frac{3}{4}}_{4,2}} \|\nlschr\|_{L^\infty_\si H^1} \lesssim \|\linwave\|_{\dot{B}^{-\frac{1}{2}}_{4,4,2}} \|\nlschr\|_{L^\infty_\si H^1} \nonumber \\
			&\lesssim \eta  \|\nlschr\|_{L^\infty_\si H^1}. \label{eq:EstimateOmegalinnlLinftyL2}
		\end{align}
		We conclude that also~\eqref{eq:BoundaryTermlinnl} and~\eqref{eq:BoundaryTermnlnl} hold for the $L^\infty_\si H^1$-norm on the left-hand side.
		
		For the $\langle \nabla \rangle^{-1} L^2_\si \dot{B}^0_{6,2}$-component of the $\schrsp$-norm we estimate
		\begin{align*}
			&\| \langle \nabla \rangle \bdyop(v,u) \|_{L^2_\si \dot{B}^0_{6,2}} 
			\lesssim \Big\|\Big( \sum_{k \in \Z} \| \langle \nabla \rangle \bdyop(P_k v, P_{\leq k-5} u)\|_{L^6}^2 \Big)^{\frac{1}{2}}\Big\|_{L^2_\si} \\
			&\lesssim \Big\|\Big( \sum_{k \in \Z} \Big( \sum_{k_1 \leq k-5} \| |\nabla|^{-1} P_k v\|_{L^6} \| P_{k_1} u\|_{L^\infty}\Big)^2 \Big)^{\frac{1}{2}} \Big\|_{L^2_\si} \lesssim \| v \|_{L^\infty \dot{B}^{-1}_{6,2}} \|\langle \nabla \rangle u\|_{L^2_\si \dot{B}^0_{6,2}}.
		\end{align*}
		Arguing as in~\eqref{eq:EstimateOmeganlnlLinftyL2} and~\eqref{eq:EstimateOmegalinnlLinftyL2} for $v = \nlwave$ respectively $v = \linwave$ and replacing $u$ by $\linschr$ respectively $\nlschr$, we obtain~\eqref{eq:BoundaryTermlinlin}-\eqref{eq:BoundaryTermnlnl} for the $\langle \nabla \rangle^{-1} L^2_\si \dot{B}^0_{6,2}$-component of the norm on the left-hand side.
		
		It remains to treat the $\langle \nabla \rangle^{-1} L^2_\si \dot{B}^{\frac{1}{4}+\ep}_{(q(\ep),\frac{2}{1-\nu}),2}$-component of the $\schrsp$-norm. Here we first employ H{\"o}lder's inequality on the sphere and then Lemma~\ref{lem:BilinearMultiplierEstimate} to infer
		\begin{align*}
			&\| \langle \nabla \rangle \bdyop(v,u) \|_{L^2_\si \dot{B}^{\frac{1}{4}+\ep}_{(q(\ep),\frac{2}{1-\nu}),2}} 
			\lesssim \Big\|\Big( \sum_{k \in \Z} 2^{2k(\frac{1}{4}+\ep)}\| \langle \nabla \rangle \bdyop(P_k v, P_{\leq k-5} u)\|_{L^{q(\ep)}}^2 \Big)^{\frac{1}{2}}\Big\|_{L^2_\si} \\
			&\lesssim \Big\|\Big( \sum_{k \in \Z} \Big( \sum_{k_1 \leq k-5} \| |\nabla|^{-\frac{3}{4}+\ep} P_k v\|_{L^{q(\ep)}} \| P_{k_1} u\|_{L^{\infty}}\Big)^2 \Big)^{\frac{1}{2}}\Big\|_{L^2_\si} \\
			&\lesssim \| v \|_{L^\infty \dot{B}^{-\frac{3}{4}+\ep}_{q(\ep),2}} \|\langle \nabla \rangle u\|_{L^2_\si \dot{B}^0_{6,2}}.
		\end{align*}
		For $v = \linwave$, Lemma~\ref{lem:SobolevSchroedingerWave}~\ref{it:SobolevWave} implies
		\begin{align*}
			&\| \langle \nabla \rangle \bdyop(\linwave,u) \|_{L^2_\si \dot{B}^{\frac{1}{4}+\ep}_{(q(\ep),\frac{2}{1-\nu}),2}} 
			\lesssim \|\linwave\|_{\dot{B}^{-\frac{1}{2}+\frac{4}{3}\ep}_{q(\ep),q(\ep),2}}  \|\langle \nabla \rangle u\|_{L^2_\si \dot{B}^0_{6,2}}
			\lesssim \eta \|\langle \nabla \rangle u\|_{L^2_\si \dot{B}^0_{6,2}}.
		\end{align*}
		In the case $v = \nlwave$, we exploit $\|\nlwave\|_{L^\infty \dot{B}^{-\frac{3}{4}+\ep}_{q(\ep),2}} \lesssim \|\nlwave\|_{L^\infty L^2}$ to deduce
		\begin{align*}
			\| \langle \nabla \rangle \bdyop(\nlwave,u) \|_{L^2_\si \dot{B}^{\frac{1}{4}+\ep}_{(q(\ep),\frac{2}{1-\nu}),2}} 
			&\lesssim \| \nlwave \|_{L^\infty L^2} \|\langle \nabla \rangle u\|_{L^2_\si \dot{B}^0_{6,2}} \\
			&\lesssim T^{-\si} \| \nlwave \|_{L^\infty_\si L^2} \|\langle \nabla \rangle u\|_{L^2_\si \dot{B}^0_{6,2}}.
		\end{align*}
		Setting $u = \linschr$ respectively $u = \nlschr$, we conclude that~\eqref{eq:BoundaryTermlinlin}-\eqref{eq:BoundaryTermnlnl} also holds for the $\langle \nabla \rangle^{-1}  L^2_\si \dot{B}^{\frac{1}{4}+\ep}_{(q(\ep),\frac{2}{1-\nu}),2}$-component of the norm on the left-hand side.
	\end{proof}

	\subsection{Quadratic terms}
	We next estimate the quadratic nonlinearities. We recall that, as discussed in the introduction, the $\linwave \nlschr$ component of the quadratic nonlinearity is the most difficult term. To estimate it, we exploit the interplay of the physical-space and the angular randomization which shows itself in the available norms here.
	\begin{lem}
		\label{lem:BilinearEstimateSchroedinger}
		Fix $T \geq 1$ and let $r, \eta > 0$. Take $u_0, v_0 \in L^2(\R^3)$ with $\|u_0\|_{L^2} + \|v_0\|_{L^2} \leq r$ such that $\linschr(t) = \eul^{\imu t \Delta} u_0$ and $\linwave(t) = \eul^{\imu \alpha t |\nabla|} v_0$ satisfy
		\begin{align*}
			\| \langle \nabla \rangle \linschr \|_{L^{\frac{2}{2-\nu}}_\sigma \dot{B}^0_{\frac{3}{\nu},2}(I_T)} +
			\| \linwave \|_{L^2 \dot{B}^{-\frac{1}{4}-\ep}_{(q(-\ep),\frac{2}{\nu}),2}(I_T)} \leq \eta.
		\end{align*}
		We then have the estimates
		\begin{align}
			\Big\|\int_{t}^\infty \eul^{\imu (t-s) \Delta} (\linwave \linschr)_{R} \dd s \Big\|_{\schrsp} &\lesssim \eta r,\label{eq:EstimateBilinearSchroedingerlinlin}\\
			\Big\|\int_{t}^\infty \eul^{\imu (t-s) \Delta} (\nlwave \linschr)_{R} \dd s \Big\|_{\schrsp} &\lesssim \eta \| \nlwave \|_{L^\infty L^2}, \label{eq:EstimateBilinearSchroedingernllin}\\
			\Big\|\int_{t}^\infty \eul^{\imu (t-s) \Delta} (\linwave \nlschr)_{R} \dd s \Big\|_{\schrsp} &\lesssim \eta \| \langle \nabla \rangle \nlschr \|_{L^2_\si \dot{B}^{\frac{1}{4}+\ep}_{(q(\ep),\frac{2}{1-\nu}),2}}, \label{eq:EstimateBilinearSchroedingerlinnl}\\
			\Big\|\int_{t}^\infty \eul^{\imu (t-s) \Delta} (\nlwave \nlschr)_{R} \dd s \Big\|_{\schrsp} &\lesssim T^{-\nu} \| \nlwave \|_{L^\infty_\si L^2} \| \langle \nabla \rangle \nlschr \|_{L^2_\si \dot{B}^0_{6,2}} \label{eq:EstimateBilinearSchroedingernlnl}
		\end{align}
		for all $\nlschr \in \langle \nabla \rangle^{-1}L^2_\sigma(I_T, \dot{B}^0_{6,2}(\R^3)) \cap \langle \nabla \rangle^{-1} L^2_\sigma(I_T,\dot{B}^{\frac{1}{4} + \ep}_{(q(\ep),\frac{2}{1-\nu}),2}(\R^3))$ and $\nlwave \in L^\infty_\sigma(I_T, L^2(\R^3))$.
	\end{lem}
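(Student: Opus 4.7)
The plan is to apply Proposition~\ref{prop:TimeWeightedStrichartzEstimatesSchroedinger} to each of the four bilinear bounds, so that the problem reduces to controlling $\langle \nabla \rangle (v u)_R$ in a dual Strichartz-type norm. Since the $\schrsp$-norm has three components, I would use part~\ref{it:TimeWeightedStrichartzadmadm} for the admissible parts $L^\infty_\si H^1$ and $\langle \nabla \rangle^{-1}L^2_\si \dot{B}^0_{6,2}$ and part~\ref{it:TimeWeightedStrichartzextadm} for the extended-range part, with a freely chosen admissible pair $(\tilde q,\tilde r)$ so that the input lies in $L^{\tilde q'}_\si \dot{B}^0_{\tilde r',2}$, to be matched to the Hölder scheme dictated by the available norms of $v$ and $u$. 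After this reduction I would decompose $(vu)_R = (vu)_{LH}+(vu)_{HH}+(vu)_{\alpha L}$, observing that $\langle \nabla \rangle$ falls on the high-frequency Schrödinger factor in the $LH$ case, is bounded by the common frequency in the $HH$ case, and is harmless in the $\alpha L$ case where the output localises near the fixed frequency $\alpha$.

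For~\eqref{eq:EstimateBilinearSchroedingerlinlin} and~\eqref{eq:EstimateBilinearSchroedingernllin} the appropriate choice is the admissible pair $(\tilde q,\tilde r)=(\tfrac{2}{\nu},\tfrac{6}{3-2\nu})$, whose dual is $L^{2/(2-\nu)}_\si L^{6/(3+2\nu)}$. I would place $\langle\nabla\rangle\linschr$ in $L^{2/(2-\nu)}_\si L^{3/\nu}$ using the randomized bound from the hypothesis, and pair it via Hölder in space and time with $\linwave$ in $L^\infty L^2$ (bounded by $\|v_0\|_{L^2}\leq r$) for the lin-lin estimate, or with $\nlwave\in L^\infty L^2$ for the nl-lin estimate; this produces the announced bounds $\eta r$ and $\eta\|\nlwave\|_{L^\infty L^2}$. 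For~\eqref{eq:EstimateBilinearSchroedingernlnl} both factors carry the time weight $t^\si$, and pairing $\langle\nabla\rangle\nlschr \in L^2_\si L^6$ with $\nlwave\in L^\infty_\si L^2$ via spatial Hölder gives a product in $L^2$; a short computation on $I_T=[T,\infty)$ trading part of the combined weight $t^{-2\si}$ against powers of $t$ then yields the gain $T^{-\nu}$ claimed in the statement.

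The hardest estimate is~\eqref{eq:EstimateBilinearSchroedingerlinnl}, because $\linwave$ decays no faster than $t^{-1}$ and therefore contributes no useful time weight. Here I would take $(\tilde q,\tilde r)=(\infty,2)$, which reduces the task to bounding $\langle\nabla\rangle(\linwave\,\nlschr)_R$ in $L^1_\si L^2$. In the $LH$ piece the central Hölder decomposition reads
\begin{equation*}
\|\langle\nabla\rangle P_k(\linwave\,\nlschr)_{LH}\|_{L^1_\si L^2}
\lesssim \sum_{k_1\leq k-5} \|P_{k_1}\linwave\|_{L^2\cL^{q(-\ep)}_r L^{2/\nu}_\theta}\,\|\langle\nabla\rangle P_k\nlschr\|_{L^2_\si\cL^{q(\ep)}_r L^{2/(1-\nu)}_\theta},
\end{equation*}
which closes because $L^2_t\cdot L^2_{\si,t}=L^1_{\si,t}$, because $\cL^{q(-\ep)}_r\cdot\cL^{q(\ep)}_r=\cL^2_r$ by~\eqref{eq:Defqeps}, and because $L^{2/\nu}_\theta\cdot L^{2/(1-\nu)}_\theta=L^2_\theta$. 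The high angular integrability $L^{2/\nu}_\theta$ on $\linwave$ is exactly what the angular randomization delivers through Proposition~\ref{prop:RandomizationImprovedEstimatesHalfWave}, whereas the moderate angular exponent $L^{2/(1-\nu)}_\theta$ on $\nlschr$ is the spherical exponent hard-wired into the $\schrsp$-norm; the resulting dyadic summation over $k_1$ is geometric since the weights $2^{k_1(\frac14+\ep)}$ and $2^{-k(\frac14+\ep)}$ coming from the Besov norms of $\linwave$ and $\langle\nabla\rangle\nlschr$ cancel. The $HH$ and $\alpha L$ pieces are handled by the same scheme, noting that in the $\alpha L$ case $\langle\nabla\rangle$ and the $k_1$-sum are confined to a frequency band of size $O(1)$ around $\log_2\alpha$. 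It is this interplay between the physical-space randomization (which provides the time decay used in the other three estimates) and the angular randomization (which provides the high angular integrability needed here) that renders~\eqref{eq:EstimateBilinearSchroedingerlinnl}, and with it the entire lemma, accessible in the energy space without any size or symmetry restriction.
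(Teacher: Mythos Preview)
Your overall strategy matches the paper's: reduce via Proposition~\ref{prop:TimeWeightedStrichartzEstimatesSchroedinger} to dual Strichartz norms, choose the admissible pair adapted to the available norms, and handle the three paraproduct pieces separately. Your treatment of~\eqref{eq:EstimateBilinearSchroedingerlinlin}, \eqref{eq:EstimateBilinearSchroedingernllin}, and the $LH$-part of~\eqref{eq:EstimateBilinearSchroedingerlinnl} is essentially the paper's proof, including the key angular H\"older splitting $\cL^{q(-\ep)}_r L^{2/\nu}_\theta \cdot \cL^{q(\ep)}_r L^{2/(1-\nu)}_\theta \hookrightarrow L^2$.

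There is, however, a genuine gap in your handling of the $\alpha L$-piece of~\eqref{eq:EstimateBilinearSchroedingerlinnl}. You assert the same scheme works because the $k_1$-sum is confined near $\log_2\alpha$, but in $(\linwave\,\nlschr)_{\alpha L}$ it is the \emph{wave} factor $P_k\linwave$ that is confined to $|k-\log_2\alpha|\le 4$; the Schr\"odinger factor is $P_{\le k-5}\nlschr$ with an unrestricted low-frequency tail. Running your $L^1_\si L^2$ scheme forces you to sum $\sum_{j\le k-5}\|P_j\nlschr\|_{\cL^{q(\ep)}_r L^{2/(1-\nu)}_\theta}$, and the only control from the $\dot B^{\frac14+\ep}_{(q(\ep),\frac{2}{1-\nu}),2}$-norm gives $\|P_j\nlschr\|\lesssim 2^{-j(\frac14+\ep)}$, a divergent series as $j\to-\infty$. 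The paper explicitly flags this case as the one where different exponents are needed: it invokes Proposition~\ref{prop:TimeWeightedStrichartzEstimatesSchroedinger}\ref{it:TimeWeightedStrichartzextext} with the extended-range pair $\frac{1}{\tilde q}=\frac12-\frac34\nu$, $\frac{1}{\tilde r}=\frac14+\frac\ep3+\frac\nu2$, so that the dual norm is $\dot B^{\cdots}_{(\tilde r',2),2}$ with angular exponent $2$. This lets one place $P_{\le k-5}\nlschr$ in the \emph{isotropic} space $L^{2/(1-\nu)}$ and control it via interpolation between $L^\infty_\si L^2$ and $L^2_\si L^6$, avoiding the low-frequency divergence entirely.

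A smaller slip: in~\eqref{eq:EstimateBilinearSchroedingernlnl} the spatial H\"older product of $L^6$ and $L^2$ lands in $L^{3/2}$, not $L^2$. The paper takes the admissible pair $(\tilde q,\tilde r)=(4,3)$, estimates in $L^{4/3}_\si \dot B^0_{3/2,2}$, and obtains $\|\nlwave\|_{L^4 L^2}\lesssim \|t^{-\si}\|_{L^4(I_T)}\|\nlwave\|_{L^\infty_\si L^2}\lesssim T^{-\nu}\|\nlwave\|_{L^\infty_\si L^2}$.
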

	
	\begin{proof}
	 We first prove estimates~\eqref{eq:EstimateBilinearSchroedingerlinlin}-\eqref{eq:EstimateBilinearSchroedingernlnl} for the $LH$-component of the resonant part. We start with~\eqref{eq:EstimateBilinearSchroedingerlinlin}.
	 
	 Proposition~\ref{prop:TimeWeightedStrichartzEstimatesSchroedinger} allows us to estimate
	 \begin{align*}
	 	&\Big\|\int_{t}^\infty \eul^{\imu (t-s) \Delta} (\linwave \linschr)_{LH} \dd s \Big\|_{\schrsp} \lesssim \| \langle \nabla \rangle (\linwave \linschr)_{LH} \|_{L^{\frac{2}{2-\nu}}_\si \dot{B}^0_{\frac{6}{3+2\nu},2}} \\
	 	&\lesssim \Big\|\Big(\sum_{k \in \Z}(1+2^{2k}) \| P_{\leq k-5} \linwave P_k \linschr \|_{L^{\frac{6}{3 + 2\nu}}}^2 \Big)^{\frac{1}{2}} \Big\|_{L^{\frac{2}{2-\nu}}_\si} \lesssim \|\linwave\|_{L^\infty L^2} \|\langle \nabla \rangle \linschr \|_{L^{\frac{2}{2-\nu}}_\si \dot{B}^0_{\frac{3}{\nu},2}} \\ &\lesssim \eta r.
	 \end{align*}
	 Replacing $\linwave$ by $\nlwave$ in the above estimate, we also obtain~\eqref{eq:EstimateBilinearSchroedingernllin} for the $LH$-component.
	 
	 We proceed with~\eqref{eq:EstimateBilinearSchroedingerlinnl}. Using Proposition~\ref{prop:TimeWeightedStrichartzEstimatesSchroedinger}, we find for the $LH$-component
	 \begin{align*}
	 	&\Big\|\int_{t}^\infty \eul^{\imu (t-s) \Delta} (\linwave \nlschr)_{LH} \dd s \Big\|_{\schrsp}
	 	 \lesssim \| \langle \nabla \rangle (\linwave \nlschr)_{LH} \|_{L^1_\si L^2} \\
	 	 &\lesssim \Big\|\Big(\sum_{k \in \Z} (1+2^{2k}) \|P_{\leq k-5} \linwave P_k \nlschr \|_{L^2}^2 \Big)^{\frac{1}{2}} \Big\|_{L^1_\si}  \\
	 	&\lesssim \Big\|\Big(\sum_{k \in \Z} (1+2^{2k})\| P_k \nlschr \|_{\cL^{q(\ep)}_r L^{\frac{2}{1-\nu}}_\theta}^2 \\
	 	&\hspace{10em} \cdot \Big(\sum_{k_1 \leq k-5} 2^{k_1(-\frac{1}{4}-\ep)} \|P_{k_1} \linwave \|_{\cL^{q(-\ep)}_r L^{\frac{2}{\nu}}_\theta} 2^{k_1(\frac{1}{4}+\ep)} \Big)^2  \Big)^{\frac{1}{2}} \Big\|_{L^1_\si} \\
	 	&\lesssim \Big\| \| \linwave \|_{\dot{B}^{-\frac{1}{4}-\ep}_{(q(-\ep),\frac{2}{\nu}),2}}\Big(\sum_{k \in \Z} 2^{2k(\frac{1}{4}+\ep)}(1+2^{2k})\| P_k \nlschr \|_{\cL^{q(\ep)}_r L^{\frac{2}{1-\nu}}_\theta}^2   \Big)^{\frac{1}{2}} \Big\|_{L^1_\si} \\
	 	&\lesssim \| \linwave \|_{L^2 \dot{B}^{-\frac{1}{4}-\ep}_{(q(-\ep),\frac{2}{\nu}),2}} \| \langle \nabla \rangle \nlschr \|_{L^2_\si \dot{B}^{\frac{1}{4}+\ep}_{(q(\ep),\frac{2}{1-\nu}),2}}\lesssim \eta \| \langle \nabla \rangle \nlschr \|_{L^2_\si \dot{B}^{\frac{1}{4}+\ep}_{(q(\ep),\frac{2}{1-\nu}),2}}.
	 \end{align*}
To prove~\eqref{eq:EstimateBilinearSchroedingernlnl} for the $LH$-component, we again apply Proposition~\ref{prop:TimeWeightedStrichartzEstimatesSchroedinger} to derive
	\begin{align*}
		&\Big\|\int_{t}^\infty \eul^{\imu (t-s) \Delta} (\nlwave \nlschr)_{LH} \dd s \Big\|_{\schrsp}
	 	 \lesssim \| \langle \nabla \rangle (\nlwave \nlschr)_{LH} \|_{L^{\frac{4}{3}}_\si \dot{B}^0_{\frac{3}{2},2}} \\
	 	  &\lesssim \Big\|\Big(\sum_{k \in \Z} (1+2^{2k}) \|P_{\leq k-5} \nlwave\|_{L^2}^2 \|P_k \nlschr \|_{L^6}^2 \Big)^{\frac{1}{2}} \Big\|_{L^{\frac{4}{3}}_\si}
	 	  \lesssim \| \nlwave \|_{L^4 L^2} \| \langle \nabla \rangle \nlschr \|_{L^2_\si \dot{B}^0_{6,2}}\\
	 	 &\lesssim T^{-\nu} \| \nlwave \|_{L^\infty_\si L^2} \|\langle \nabla \rangle \nlschr \|_{L^2_\si \dot{B}^0_{6,2}}.
	\end{align*}	 
	
	Straightforward adaptions of the above estimates also yield~\eqref{eq:EstimateBilinearSchroedingerlinlin}-\eqref{eq:EstimateBilinearSchroedingernlnl} for the $HH$-component as well as~\eqref{eq:EstimateBilinearSchroedingerlinlin}, \eqref{eq:EstimateBilinearSchroedingernllin}, and~\eqref{eq:EstimateBilinearSchroedingernlnl} for the $\alpha L$-component. E.g., we estimate
	\begin{align*}
		&\Big\|\int_{t}^\infty \eul^{\imu (t-s) \Delta} (\linwave \linschr)_{HH} \dd s \Big\|_{\schrsp} \lesssim \| \langle \nabla \rangle (\linwave \linschr)_{HH} \|_{L^{\frac{2}{2-\nu}}_\si \dot{B}^0_{\frac{6}{3+2\nu},2}}  \\
	 	&\lesssim \| \langle \nabla \rangle (\linwave \linschr)_{HH} \|_{L^{\frac{2}{2-\nu}}_\si L^{\frac{6}{3+2\nu}}} \lesssim \Big\|\sum_{k \in \Z}\sum_{l = -4}^4 (1+2^{k}) \| P_{k+l} \linwave \|_{L^2} \| P_k \linschr \|_{L^{\frac{3}{\nu}}}\Big\|_{L^{\frac{2}{2-\nu}}_\si} \\
	 	&\lesssim \|\linwave\|_{L^\infty L^2} \|\langle \nabla \rangle \linschr \|_{L^{\frac{2}{2-\nu}}_\si \dot{B}^0_{\frac{3}{\nu},2}} \lesssim \eta r,\\
	 	&\Big\|\int_{t}^\infty \eul^{\imu (t-s) \Delta} (\linwave \linschr)_{\alpha L} \dd s \Big\|_{\schrsp} \lesssim \| \langle \nabla \rangle (\linwave \linschr)_{\alpha L} \|_{L^{\frac{2}{2-\nu}}_\si \dot{B}^0_{\frac{6}{3+2\nu},2}} \\
	 	&\lesssim \Big\| \Big(\sum_{k \sim \log_2 \alpha}(1+2^{2k}) \| P_{k} \linwave \|_{L^2}^2 \| P_{\leq k-5} \linschr \|_{L^{\frac{3}{\nu}}}^2 \Big)^{\frac{1}{2}} \Big\|_{L^{\frac{2}{2-\nu}}_\si} \\
	 	&\lesssim \|\linwave\|_{L^\infty L^2} \|\linschr \|_{L^{\frac{2}{2-\nu}}_\si L^{\frac{3}{\nu}}} \lesssim \eta r
	\end{align*}	
	for~\eqref{eq:EstimateBilinearSchroedingerlinlin}.
	 Only for the $\alpha L$-component in~\eqref{eq:EstimateBilinearSchroedingerlinnl} we have to use different exponents.
	We set $\frac{1}{\tilde{q}} = \frac{1}{2} - \frac{3}{4} \nu$ and $\frac{1}{\tilde{r}} = \frac{1}{4} + \frac{\ep}{3} + \frac{\nu}{2}$. Proposition~\ref{prop:TimeWeightedStrichartzEstimatesSchroedinger}~\ref{it:TimeWeightedStrichartzextext} then yields
	\begin{align*}
		&\Big\|\int_{t}^\infty \eul^{\imu (t-s) \Delta} (\linwave \nlschr)_{\alpha L} \dd s \Big\|_{\schrsp}
	 	 \lesssim \| \langle \nabla \rangle (\linwave \nlschr)_{\alpha L}\|_{L^{\tilde{q}'}_\si \dot{B}^{\frac{3}{2}-\frac{2}{\tilde{q}}-\frac{3}{\tilde{r}}}_{(\tilde{r}',2),2}} \\
	 	 &\lesssim \Big\|\Big(\sum_{k \sim \log_2 \alpha} (1+2^{2k})2^{2k(\frac{3}{2}-\frac{2}{\tilde{q}}-\frac{3}{\tilde{r}})} \| P_k \linwave P_{\leq k-5} \nlschr \|_{\mathcal{L}^{\tilde{r}'}_r L^2_\theta}^2 \Big)^{\frac{1}{2}} \Big\|_{L^{\tilde{q}'}_\si} \\
	 	 &\lesssim \Big\|\Big(\sum_{k \sim \log_2 \alpha} \| P_k \linwave\|_{\mathcal{L}^{q(-\ep)}_{r}L^{\frac{2}{\nu}}_\theta}^2 \|P_{\leq k-5} \nlschr \|_{\mathcal{L}^{\frac{2}{1-\nu}}_r L^{\frac{2}{1-\nu}}_\theta}^2 \Big)^{\frac{1}{2}} \Big\|_{L^{\tilde{q}'}_\sigma}   \\
	 	 &\lesssim \|\linwave\|_{L^2 \dot{B}^{-\frac{1}{4}-\ep}_{(q(-\ep),\frac{2}{\nu}),2}} \|\nlschr \|_{L_\si^{\frac{4}{3\nu}}L^{\frac{2}{1-\nu}}} \lesssim \eta \| \nlschr \|_{L^\infty_\si L^2 \cap L^2_\si L^6},
	\end{align*}
	where we used interpolation in the last step.
	\end{proof}
	
	The nonlinearity of the wave component is easier to control as the time weight we gain through the physical-space randomization is sufficient to close the estimate.
	\begin{lem}
		\label{lem:BilinearEstimateWave}
		Fix $T \geq 1$ and let $\eta > 0$. Take $u_0 \in L^2(\R^3)$ such that $\linschr(t) = \eul^{\imu t \Delta} u_0$ satisfies
		\begin{align*}
			\| \langle \nabla \rangle \linschr \|_{L^{\frac{2}{2-\nu}}_\sigma \dot{B}^0_{\frac{3}{\nu},2}(I_T)} \leq \eta.
		\end{align*}
		We then have the estimates
		\begin{align}
			\label{eq:BilinearEstimateWave}
			&\Big\|\int_{t}^\infty \eul^{\imu \alpha (t-s) |\nabla|} |\nabla| (u \overline{\linschr} + \linschr \overline{u}) \dd s \Big\|_{L^\infty_{\sigma}L^2} \lesssim \eta  \| \langle \nabla \rangle u \|_{L^\infty L^2 \cap L^2 \dot{B}^0_{6,2}}, \\
			&\Big\|\int_{t}^\infty \eul^{\imu \alpha (t-s) |\nabla|} |\nabla| \big(\nlschr^1 \overline{\nlschr^2}\big) \dd s \Big\|_{L^\infty_{\sigma}L^2} \lesssim T^{-\nu} \| \langle \nabla \rangle \nlschr^1 \|_{L_\sigma^2 \dot{B}^0_{6,2}} \| \langle \nabla \rangle \nlschr^2 \|_{L^\infty_\si L^2 \cap L^2_\si \dot{B}^0_{6,2}}
		\end{align}
		for all $u \in L^\infty(I_T, H^1(\R^3)) \cap \langle \nabla \rangle^{-1} L^2(I_T, \dot{B}^0_{6,2}(\R^3))$, $\nlschr^1 \in \langle \nabla \rangle^{-1} L^2_\sigma(I_T, \dot{B}^0_{6,2}(\R^3))$ and $\nlschr^2 \in L^\infty_\sigma(I_T, H^1(\R^3)) \cap \langle \nabla \rangle^{-1} L^2_\sigma(I_T, \dot{B}^0_{6,2}(\R^3))$.
	\end{lem}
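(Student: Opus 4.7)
My plan is to reduce both estimates to bounds on the nonlinearity in $L^1_\sigma L^2$ via the time-weighted energy estimate~\eqref{eq:TimeWeightedEnergyEstimateWave} from Remark~\ref{rem:TimeWeightedEnergyEstimateWave}, and then to control the resulting $L^1_\sigma L^2$-norms of $|\nabla|(u\bar\linschr)$, $|\nabla|(\linschr\bar u)$ and $|\nabla|(\nlschr^1\overline{\nlschr^2})$ by paraproduct decomposition together with appropriate spatial and temporal H\"older inequalities. In each paraproduct the derivative $|\nabla|$ is naturally absorbed by the highest-frequency factor; the symmetric term $\linschr \bar u$ is handled identically to $u\bar\linschr$, and the $HH$-piece is dealt with by the same scheme as $LH$ and $HL$ after summing over the diagonal $k_1\sim k_2$ by Cauchy-Schwarz in frequency.

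For estimate~\eqref{eq:BilinearEstimateWave}, I would employ spatial H\"older with $\frac{1}{2} = \frac{\nu}{3} + \frac{3-2\nu}{6}$ to pair the $\linschr$-factor with $\dot{B}^0_{3/\nu,2}$ and the $u$-factor with $L^{6/(3-2\nu)}$, the frequency orthogonality turning the paraproduct sum into products of these norms (with the $|\nabla|$ absorbed into $\langle\nabla\rangle \linschr$ for the $LH$-part and into $\langle\nabla\rangle u$ for the $HL$-part). In time I would apply H\"older with $1 = \frac{2-\nu}{2} + \frac{\nu}{2}$, placing the weight $t^\sigma$ on the $\linschr$-factor so that the assumed bound on $\|\langle\nabla\rangle \linschr\|_{L^{2/(2-\nu)}_\sigma \dot{B}^0_{3/\nu,2}}$ applies, and interpolating $\langle\nabla\rangle u$ between $L^\infty L^2$ and $L^2 \dot{B}^0_{6,2}$ (using the Littlewood-Paley embedding $\dot{B}^0_{6,2}\hookrightarrow L^6$) to land in $L^{2/\nu} L^{6/(3-2\nu)}$, which is controlled by $\|\langle\nabla\rangle u\|_{L^\infty L^2\cap L^2 \dot{B}^0_{6,2}}$.

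For the second estimate the basic mechanism is the same but with the more symmetric exponents: spatial H\"older $\frac{1}{2}=\frac{1}{6}+\frac{1}{3}$, temporal H\"older $1=\frac{1}{2}+\frac{1}{2}$, and the weight $t^\sigma$ placed on the factor whose derivative is being taken so that $\|\langle\nabla\rangle\nlschr^1\|_{L^2_\sigma \dot{B}^0_{6,2}}$ or $\|\langle\nabla\rangle\nlschr^2\|_{L^2_\sigma \dot{B}^0_{6,2}}$ appears. The hard part is extracting the $T^{-\nu}$ gain. I would obtain this by bounding the $L^2_t L^3_x$-norm of the non-differentiated factor $h\in\{\nlschr^1,\nlschr^2\}$ via the pointwise interpolation $\|h\|_{L^3}^2 \le \|h\|_{L^2}\|h\|_{L^6}$, the pointwise decay $\|h(t)\|_{L^2}\le t^{-\sigma}\|h\|_{L^\infty_\sigma L^2}$, and Cauchy-Schwarz in time, yielding
\[
\|h\|_{L^2_t L^3_x}^2 \lesssim \|h\|_{L^\infty_\sigma L^2}\, \|t^{-2\sigma}\|_{L^2(T,\infty)}\, \|t^\sigma h\|_{L^2 L^6}.
\]
Since $2\sigma = 1-2\nu < 1$ one has $\|t^{-2\sigma}\|_{L^2(T,\infty)} \sim T^{-(1/2-2\nu)}$, and taking square roots produces the prefactor $T^{-(1/4-\nu)}$, which for $T \ge 1$ and $0 < \nu \le \tfrac{1}{8}$ is dominated by $T^{-\nu}$.

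The main obstacle is therefore not the multilinear algebra itself (which is routine once the derivative is placed correctly in each paraproduct) but arranging the time-weight split so that the interpolation-induced prefactor is at least $T^{-\nu}$; this works precisely because the $L^\infty_\sigma L^2$-bound on the second factor provides the $t^{-\sigma}$ pointwise decay needed to make the Cauchy-Schwarz integral converge with the right rate.
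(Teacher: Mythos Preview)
Your treatment of the first estimate~\eqref{eq:BilinearEstimateWave} is essentially the paper's proof: energy inequality~\eqref{eq:TimeWeightedEnergyEstimateWave}, dyadic/paraproduct decomposition, spatial H\"older $\frac{1}{2}=\frac{\nu}{3}+\frac{3-2\nu}{6}$, temporal H\"older $1=\frac{2-\nu}{2}+\frac{\nu}{2}$, and interpolation of $\langle\nabla\rangle u$ between $L^\infty L^2$ and $L^2\dot B^0_{6,2}$. Nothing to add there.

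For the second estimate there is a genuine (if easily repaired) gap. Your scheme assigns the $L^6$-slot and the time weight $t^\sigma$ to the \emph{differentiated} factor and the $L^3$-slot to the non-differentiated one, and then extracts $T^{-\nu}$ from the latter via $\|h\|_{L^3}^2\le\|h\|_{L^2}\|h\|_{L^6}$ and $\|h(t)\|_{L^2}\le t^{-\sigma}\|h\|_{L^\infty_\sigma L^2}$. In the $LH$-paraproduct, however, the differentiated factor is $\nlschr^2$, so your non-differentiated factor is $h=\nlschr^1$; but the lemma only assumes $\nlschr^1\in\langle\nabla\rangle^{-1}L^2_\sigma\dot B^0_{6,2}$ and gives \emph{no} $L^\infty_\sigma L^2$ control on $\nlschr^1$. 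Hence your interpolation step is unavailable for that piece, and the $LH$-contribution cannot be closed as written.

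The paper avoids this by a different (asymmetric) allocation: it proves the fixed-time bound
\[
\||\nabla|(\nlschr^1\overline{\nlschr^2})\|_{L^2}\lesssim\|\langle\nabla\rangle\nlschr^1\|_{\dot B^0_{6,2}}\|\langle\nabla\rangle\nlschr^2\|_{\dot B^0_{3,2}},
\]
i.e.\ $\nlschr^1$ is \emph{always} in the $L^6$-slot and $\nlschr^2$ always in the $L^3$-slot, independently of which one is high-frequency (both pick up a $\langle\nabla\rangle$ because in each paraproduct the output derivative is controlled by the highest input frequency). The weight $t^\sigma$ is then placed on $\nlschr^1$ (giving $\|\langle\nabla\rangle\nlschr^1\|_{L^2_\sigma\dot B^0_{6,2}}$), and the $T^{-\nu}$ gain comes solely from $\nlschr^2$ via
\[
\|\langle\nabla\rangle\nlschr^2\|_{L^2\dot B^0_{3,2}}\lesssim T^{-\nu}\|\langle\nabla\rangle\nlschr^2\|_{L^4_\sigma\dot B^0_{3,2}}\lesssim T^{-\nu}\|\langle\nabla\rangle\nlschr^2\|_{L^\infty_\sigma L^2\cap L^2_\sigma\dot B^0_{6,2}},
\]
which is legitimate because $\nlschr^2$ \emph{is} assumed to lie in $L^\infty_\sigma H^1$. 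Your Cauchy--Schwarz/interpolation mechanism produces the same $T^{-1/4+\nu}\le T^{-\nu}$ prefactor and would work equally well here---but it has to be applied to $\langle\nabla\rangle\nlschr^2$, not to whichever factor happens to be non-differentiated.
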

	
	\begin{proof}
		Recalling~\eqref{eq:TimeWeightedEnergyEstimateWave} and using dyadic decomposition, we derive
		\begin{align*}
			&\Big\|\int_{t}^\infty \eul^{\imu \alpha (t-s) |\nabla|} |\nabla| (u \overline{\linschr} + \linschr \overline{u}) \dd s \Big\|_{L^\infty_{\sigma}L^2} \lesssim \| |\nabla| (u \overline{\linschr} + \linschr \overline{u})\|_{L^1_\si L^2} \\
			&\lesssim \| \| \langle \nabla \rangle u \|_{\dot{B}^0_{\frac{6}{3-2\nu},2}} \|\langle \nabla \rangle \linschr \|_{\dot{B}^0_{\frac{3}{\nu},2}}\|_{L^1_\si} 
			\lesssim \| \langle \nabla \rangle \linschr \|_{L^{\frac{2}{2-\nu}}_\sigma \dot{B}^0_{\frac{3}{\nu},2}} \| \langle \nabla \rangle u \|_{L^{\frac{2}{\nu}} \dot{B}^0_{\frac{6}{3-2\nu},2}} \\
			&\lesssim \eta \| \langle \nabla \rangle u \|_{L^\infty L^2 \cap L^2 \dot{B}^0_{6,2}},
		\end{align*}
		where we interpolated in the last step. For the second estimate, we again employ~\eqref{eq:TimeWeightedEnergyEstimateWave}, dyadic decomposition, and interpolation to infer
		\begin{align*}
			&\Big\|\int_{t}^\infty \eul^{\imu \alpha (t-s) |\nabla|} |\nabla| \big(\nlschr^1 \overline{\nlschr^2}\big) \dd s \Big\|_{L^\infty_{\sigma}L^2} \lesssim \big\| |\nabla| \big(\nlschr^1 \overline{\nlschr^2}\big) \big\|_{L^1_\si L^2} \\
			&\lesssim \| \langle \nabla \rangle \nlschr^1 \|_{L^2_\si \dot{B}^0_{6,2}} \| \langle \nabla \rangle \nlschr^2 \|_{L^2 \dot{B}^0_{3,2} } 
			 \lesssim T^{-\nu}\| \langle \nabla \rangle \nlschr^1 \|_{L^2_\si \dot{B}^0_{6,2}} \| \langle \nabla \rangle \nlschr^2 \|_{L^4_\si \dot{B}^0_{3,2} } \\
			 &\lesssim T^{-\nu} \| \langle \nabla \rangle \nlschr^1 \|_{L^2_\si \dot{B}^0_{6,2}} \| \langle \nabla \rangle \nlschr^2 \|_{L^\infty_\si L^2 \cap L^2_\si \dot{B}^0_{6,2} }. \qedhere
		\end{align*}
	\end{proof}
	
	\subsection{Cubic terms}
	
	It remains to control the cubic nonlinearities. Using time-weighted spaces, the expectation is that higher nonlinearities should be easier to control as we gain more time decay. This expectation is justified as the following lemmas show.
	\begin{lem}
		\label{lem:EstimateCubicTermsuuu}
		Fix $T \geq 1$. Then the estimate
		\begin{align*}
			&\Big\|\int_{t}^\infty  \eul^{\imu (t-s) \Delta} \bdyop(|\nabla| \big(u_1 \overline{u_2}), u_3 \big) \dd s \Big\|_{\schrsp} \\
			&\hspace{12em} \lesssim T^{-\si} \| \langle \nabla \rangle u_1 \|_{L^2_\si \dot{B}^0_{6,2}} \| u_2 \|_{L^\infty L^2} \| \langle \nabla \rangle u_3 \|_{L^2_\si \dot{B}^0_{6,2}}
		\end{align*}
		holds for all $u_1, u_3 \in \langle \nabla \rangle ^{-1} L^2_\si(I_T, \dot{B}^0_{6,2}(\R^3))$ and $ u_2 \in L^\infty(I_T, L^2(\R^3))$.
	\end{lem}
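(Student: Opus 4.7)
My plan is to mimic the pattern of Lemmas~\ref{lem:EstimateBoundaryTerms} and~\ref{lem:BilinearEstimateSchroedinger}: first use time-weighted Strichartz to reduce the $\schrsp$-norm of the Duhamel integral to a single spatial-temporal norm of the integrand; second estimate that norm via the $XL$-structure of $\bdyop$ and the Coifman--Meyer-type Lemma~\ref{lem:BilinearMultiplierEstimate}; third recover the prefactor $T^{-\si}$ from the time weights. Applying Proposition~\ref{prop:TimeWeightedStrichartzEstimatesSchroedinger} with either $(\tilde q, \tilde r) = (2,6)$ (dual pair $(2, 6/5)$, valid for parts~\ref{it:TimeWeightedStrichartzadmadm} and~\ref{it:TimeWeightedStrichartzextext}) or $(\tilde q, \tilde r) = (\infty, 2)$ (dual pair $(1,2)$, valid for all three parts) reduces the problem to bounding the integrand $\langle \nabla \rangle \bdyop(|\nabla|(u_1\overline{u_2}), u_3)$ in $L^2_\si \dot B^0_{6/5, 2}$ or $L^1_\si L^2$ respectively.

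For the fixed-time spatial bound I would proceed as in the proof of Lemma~\ref{lem:EstimateBoundaryTerms}: dyadic decomposition along the $XL$-support of $\bdyop$, Sobolev embedding, and Lemma~\ref{lem:BilinearMultiplierEstimate} reduce matters to estimates of the form
\[
\|\langle\nabla\rangle \bdyop(P_k(|\nabla|(u_1\overline{u_2})), P_{\leq k-5} u_3)\|_{L^{r}} \lesssim \|P_k(u_1\overline{u_2})\|_{L^{r_1}}\,\|P_{\leq k-5} u_3\|_{L^{r_2}},
\]
summed in $\ell^2_k$, where the $|\nabla|$ hitting $u_1 \overline{u_2}$ and the outer $\langle\nabla\rangle$ are absorbed by the $|\nabla|^{-1}\langle\nabla\rangle^{-1}$ contributed by $\bdyop$ on its $XL$-support. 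Choosing $r_1 = 3/2$, $r_2 = 6$ (so $r = 6/5$), a paraproduct decomposition of $u_1 \overline{u_2}$ together with Hölder's inequality then gives the pointwise-in-time bound
\[
\|\langle\nabla\rangle \bdyop(|\nabla|(u_1\overline{u_2}), u_3)(t)\|_{L^{6/5}} \lesssim \|\langle\nabla\rangle u_1(t)\|_{\dot B^0_{6,2}}\,\|u_2(t)\|_{L^2}\,\|\langle\nabla\rangle u_3(t)\|_{\dot B^0_{6,2}}.
\]

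In the time variable, Hölder's inequality with $u_1, u_3 \in L^2_\si$ and $u_2 \in L^\infty$ naturally produces the weight $t^{2\si}$, i.e.\ an $L^1_{2\si}$-bound in time, which is one factor of $t^{-\si}$ short of the $L^2_\si$ (or $L^1_\si$) demanded by the dual Strichartz estimate. Writing $t^\si = t^{-\si}\cdot t^{2\si}$ and using $t^{-\si} \leq T^{-\si}$ on $I_T = [T,\infty)$ produces exactly the $T^{-\si}$-prefactor in the statement. I expect the main technical obstacle to be the time-Hölder balance, since we are only given $u_1, u_3 \in L^2_\si \dot B^0_{6,2}$ without a separate $L^\infty$-in-time control: the required conversion from $L^1_{2\si}$ to $L^2_\si$ (for the $(2, 6/5)$ dual pair) or from $L^1_{2\si} L^{6/5}$ to $L^1_\si L^2$ (for the $(1,2)$ dual pair, via a Bernstein-based frequency-localized Sobolev embedding on the $XL$-support) has to be arranged carefully, with the $T^{-\si}$ loss being exactly the price paid for this upgrade.
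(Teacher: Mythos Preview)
Your overall plan---reduce via time-weighted Strichartz, then Coifman--Meyer on the $XL$-support, then recover $T^{-\si}$ from the time weights---is exactly the paper's strategy. The paper simply takes the dual pair $(\tilde q,\tilde r)=(\infty,2)$, i.e.\ it bounds the full $\schrsp$-norm of the Duhamel integral by $\|\bdyop(|\nabla|(u_1\overline{u_2}),u_3)\|_{L^1_\si H^1}$ (Proposition~\ref{prop:TimeWeightedStrichartzEstimatesSchroedinger} applies to all three components since $\tilde q=\infty\neq 2$).

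Where your execution goes off track is the spatial H\"older split. Your choice $r_1=3/2$, $r_2=6$ lands the integrand in $L^{6/5}$, and neither of your two proposed ``conversions'' closes: for the $(2,6/5)$ route, $L^2_\si\times L^\infty\times L^2_\si$ in time only gives $L^1$, so you cannot reach $L^2_\si$ and no weight manipulation fixes this; for the $(1,2)$ route, a Bernstein upgrade $L^{6/5}\to L^2$ on the output (at frequency $\sim 2^k$) costs a full derivative, and the $|\nabla|^{-1}\langle\nabla\rangle^{-1}$ gain of $\bdyop$ is already spent against the outer $\langle\nabla\rangle$ and the inner $|\nabla|$, so that derivative would land on $u_1\overline{u_2}$---but you only control $u_2$ in $L^2$, not $H^1$.

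The paper instead uses the split $r_1=2$, $r_2=\infty$ directly in $L^2$:
\[
\|\langle\nabla\rangle\bdyop(P_k|\nabla|(u_1\overline{u_2}),P_{k_1}u_3)\|_{L^2}\lesssim \|P_k(u_1\overline{u_2})\|_{L^2}\|P_{k_1}u_3\|_{L^\infty}.
\]
Summing $\ell^1_{k_1}\ell^2_k$ gives $\|u_1\overline{u_2}\|_{L^2}\cdot\sum_{k_1}\|P_{k_1}u_3\|_{L^\infty}$. The point is that $\sum_{k_1}\|P_{k_1}f\|_{L^\infty}\lesssim\|\langle\nabla\rangle f\|_{\dot B^0_{6,2}}$ (Bernstein $L^6\to L^\infty$ gains $2^{k_1/2}$, which together with the weight $(1+2^{2k_1})^{1/2}$ is Cauchy--Schwarz summable); the same embedding gives $\|u_1\overline{u_2}\|_{L^2}\le\|u_1\|_{L^\infty}\|u_2\|_{L^2}\lesssim\|\langle\nabla\rangle u_1\|_{\dot B^0_{6,2}}\|u_2\|_{L^2}$. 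Now the time integral is genuinely $L^1_\si$ of a product with $L^2_\si\times L^\infty\times L^2_\si$ time integrability, and writing $t^\si=t^{-\si}t^{2\si}\le T^{-\si}t^{2\si}$ yields the claimed prefactor.
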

	\begin{proof}
		We first note that Proposition~\ref{prop:TimeWeightedStrichartzEstimatesSchroedinger} gives
		\begin{align*}
			\Big\|\int_{t}^\infty  \eul^{\imu (t-s) \Delta} \bdyop(|\nabla| \big(u_1 \overline{u_2}), u_3 \big) \dd s \Big\|_{\schrsp} \lesssim \|\bdyop(|\nabla| \big(u_1 \overline{u_2}), u_3 )\|_{L^1_\si H^1}.
		\end{align*}
		Combining dyadic decomposition with Lemma~\ref{lem:BilinearMultiplierEstimate} as in the proof of Lemma~\ref{lem:BilinearEstimateSchroedinger}, we derive
		\begin{align*}
			&\|\bdyop(|\nabla| (u_1 \overline{u_2}), u_3)\|_{L^1_\si H^1}
			\lesssim \Big\| \Big(\sum_{k \in \Z} \| \langle \nabla \rangle \bdyop(P_k |\nabla| (u_1 \overline{u_2}), P_{\leq k-5} u_3)\|_{L^2}^2 \Big)^{\frac{1}{2}}\Big\|_{L^1_\si} \\
			&\lesssim \Big\| \Big(\sum_{k \in \Z} \Big( \sum_{k_1 \leq k-5} \|P_k (u_1 \overline{u_2})\|_{L^2} \|P_{k_1} u_3\|_{L^\infty} \Big)^2 \Big)^{\frac{1}{2}} \Big\|_{L^1_\si}
			\lesssim \| \|u_1 \overline{u_2}\|_{L^2} \|\langle \nabla \rangle u_3\|_{\dot{B}^0_{6,2}} \|_{L^1_\si} \\
			&\lesssim T^{-\si} \| \langle \nabla \rangle u_1\|_{L^2_\si \dot{B}^0_{6,2}} \|u_2\|_{L^\infty L^2} \|\langle \nabla \rangle u_3\|_{L^2_\si \dot{B}^0_{6,2}}. \qedhere
		\end{align*}
	\end{proof}
	
	The second cubic nonlinearity needs a bit more effort as we have to distinguish between $\linwave$ and $\nlwave$.
	\begin{lem}
		\label{lem:EstimateCubicTermsVVu}
		Fix $T \geq 1$ and let $0 < \eta \leq r$. Take $u_0, v_0 \in L^2(\R^3)$ with $\|u_0\|_{L^2} + \|v_0\|_{L^2} \leq r$ such that $\linschr(t) = \eul^{\imu t \Delta} u_0$ and $\linwave(t) = \eul^{\imu \alpha t |\nabla|} v_0$ satisfy
		\begin{align*}
			\| \langle \nabla \rangle \linschr \|_{L^{2}_\sigma \dot{B}^0_{6,2}(I_T)} +
			\| \linwave \|_{L^4 \dot{B}^{-\frac{1}{2}}_{4,2}(I_T)} \leq \eta.
		\end{align*}
		We then have the estimates
		\begin{align}
			&\Big\|\int_{t}^\infty \eul^{\imu (t-s) \Delta} \bdyop(\linwave,\linwave u) \dd s \Big\|_{\schrsp} \lesssim \eta r \| \langle \nabla \rangle u\|_{L^2_\si \dot{B}^0_{6,2}}, \label{eq:Cubiclinlin}\\
			&\Big\|\int_{t}^\infty \eul^{\imu (t-s) \Delta} \bdyop(v,\nlwave u) \dd s \Big\|_{\schrsp} + \Big\|\int_{t}^\infty \eul^{\imu (t-s) \Delta} \bdyop(\nlwave,v u) \dd s \Big\|_{\schrsp} \nonumber\\
			&\hspace{14em}\lesssim T^{-\nu} \|v\|_{L^\infty L^2} \|\nlwave\|_{L^\infty_\si L^2} \| \langle \nabla \rangle u\|_{L^2_\si \dot{B}^0_{6,2}} \label{eq:Cubicnlwave}
		\end{align}
		for all $u \in \langle \nabla \rangle^{-1} L^2_\si(I_T, \dot{B}^0_{6,2}(\R^3))$, $\nlwave \in L^\infty_\sigma(I_T,L^2(\R^3))$, and $v \in L^\infty(I_T, L^2(\R^3))$.
	\end{lem}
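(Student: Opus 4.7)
The overall strategy is to mimic the proof of Lemma~\ref{lem:EstimateCubicTermsuuu}: first use the time-weighted Strichartz estimate to reduce the $\schrsp$-norm of the Duhamel integral to the $L^1_\sigma H^1$-norm of the integrand, then exploit the $\alpha L^{-1}\langle\nabla\rangle^{-1}$ scaling of $\bdyop$ on the XL support via Lemma~\ref{lem:BilinearMultiplierEstimate} combined with a dyadic decomposition, and finally distribute the three factors with H\"older's inequality, invoking the hypotheses on $\linwave$, $\linschr$, and $u$.

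First I would apply Proposition~\ref{prop:TimeWeightedStrichartzEstimatesSchroedinger}(i)--(ii) with dual admissible pair $(\tilde q',\tilde r')=(1,2)$ to obtain, for each of the three terms in the conclusion,
\begin{align*}
	\Big\| \int_t^\infty \eul^{\imu(t-s)\Delta}\bdyop(v_1,v_2 u)\dd s\Big\|_{\schrsp}
	\lesssim \|\bdyop(v_1,v_2 u)\|_{L^1_\sigma H^1}.
\end{align*}
Then, just as in the proof of Lemma~\ref{lem:EstimateBoundaryTerms}, I perform a dyadic decomposition respecting the XL constraint and apply Lemma~\ref{lem:BilinearMultiplierEstimate} with inputs of suitable Lebesgue regularity. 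Concretely, $|\nabla|\langle\nabla\rangle\bdyop$ is a bounded Coifman--Meyer multiplier, so for every exponent triple $1/r=1/p+1/q$,
\begin{align*}
	\|\langle\nabla\rangle\bdyop(P_k v_1,P_{\leq k-5}(v_2 u))\|_{L^r}
	\lesssim 2^{-k}\,\|P_k v_1\|_{L^p}\,\|P_{\leq k-5}(v_2 u)\|_{L^q}.
\end{align*}

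For estimate~\eqref{eq:Cubiclinlin}, I would use that mass conservation for the half-wave group gives $\|\linwave\|_{L^\infty L^2}\leq\|v_0\|_{L^2}\leq r$; this $L^\infty L^2$-control absorbs one copy of $\linwave$ in the inner slot. After the bilinear multiplier step (carried out as in Lemma~\ref{lem:EstimateBoundaryTerms} with $(p,q)=(2,\infty)$ so that the $2^{-k}$ is digested by dyadic summation against $\langle\nabla\rangle\linwave\in L^2$), it reduces to estimating $\|\linwave u\|_{L^1_\sigma L^\infty\text{-type}}$, which I control via H\"older in space and time: the second copy of $\linwave$ is placed in $L^4 \dot B^{-1/2}_{4,2}$ (hypothesis) and $u$ is placed in the time-weighted norm $\|\langle\nabla\rangle u\|_{L^2_\sigma\dot B^0_{6,2}}$, after which Bernstein/Sobolev gives the required $L^\infty$-type integrability of low-frequency pieces of $u$. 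The time exponents balance: $L^4_t\cdot L^\infty_t\cdot L^2_{t,\sigma}\hookrightarrow L^1_{t,\sigma}$ by $\tfrac{1}{4}+0+\tfrac{1}{2}\leq 1$, producing the clean bound $\eta r\,\|\langle\nabla\rangle u\|_{L^2_\sigma\dot B^0_{6,2}}$ with no time-weight loss.

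For~\eqref{eq:Cubicnlwave} the same dyadic/bilinear-multiplier scheme applies, but now one of the wave factors is the nonlinear piece $\nlwave$, for which only $\|\nlwave\|_{L^\infty_\sigma L^2}$ is available. Converting this into an $L^\infty L^2$-bound costs a factor $t^{-\sigma}$, and since I need the output $L^1_\sigma H^1$ I can afford to pay $T^{-\sigma}$ on the $\nlwave$-factor and still integrate; one of these factors is then the $v$-factor placed in $L^\infty L^2$. The bookkeeping on the time weight produces the gain $T^{-\nu}$: after placing $\langle\nabla\rangle u$ in $L^2_\sigma\dot B^0_{6,2}$ (consuming one weight $t^\sigma$) and $\nlwave$ in $L^\infty_\sigma L^2$ (consuming another $t^\sigma$), the leftover time norm on the remaining factor is $L^{1/(1-2\sigma)}\!=\!L^{1/(2\nu)}$, which upon trading against the extra $\|\linwave\|_{L^4\dot B^{-1/2}_{4,2}}$-type norm (or $\|v\|_{L^\infty L^2}$) leaves precisely a $T^{-\nu}$ surplus.

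The main obstacle is the balancing in step~three: distributing the three factors over space-Lebesgue exponents compatible with the $|\nabla|^{-1}\langle\nabla\rangle^{-1}$-loss of $\bdyop$ while simultaneously matching the time-exponent budget. In particular, the fact that we cannot use $L^\infty_t$ for the linear wave factor forces us to combine $\|\linwave\|_{L^\infty L^2}=r$ (mass conservation) with the time-integrable norm $\|\linwave\|_{L^4\dot B^{-1/2}_{4,2}}\leq\eta$, and for~\eqref{eq:Cubicnlwave} to carefully convert $\nlwave\in L^\infty_\sigma L^2$ into $L^\infty L^2$ at cost $T^{-\sigma}$ and recoup the missing power by interpolating with the time-weighted $u$-norm. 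Once this juggling is settled, the remaining computations are entirely parallel to those in Lemma~\ref{lem:EstimateCubicTermsuuu} and Lemma~\ref{lem:EstimateBoundaryTerms}.
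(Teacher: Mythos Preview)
Your overall scheme (Strichartz reduction, then Lemma~\ref{lem:BilinearMultiplierEstimate} on dyadic pieces, then H\"older) is right, but your choice of dual Strichartz exponent creates a genuine gap. You reduce to $\|\bdyop(v_1,v_2 u)\|_{L^1_\sigma H^1}$ and then claim ``$L^4_t\cdot L^\infty_t\cdot L^2_{t,\sigma}\hookrightarrow L^1_{t,\sigma}$ by $\tfrac14+0+\tfrac12\leq 1$''. On the infinite interval $I_T$ this is false: H\"older only yields $L^{4/3}_{t,\sigma}$, and $L^{4/3}(I_T)$ does not embed into $L^1(I_T)$. With the norms actually available in the hypotheses---$\|\linwave\|_{L^4\dot B^{-1/2}_{4,2}}$, $\|\linwave\|_{L^\infty L^2}$, and $\|\langle\nabla\rangle u\|_{L^2_\sigma\dot B^0_{6,2}}$---there is simply no way to reach an $L^1$ time norm for~\eqref{eq:Cubiclinlin}, so the argument breaks at this step. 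The same defect recurs in your treatment of~\eqref{eq:Cubicnlwave}: the ``leftover time norm $L^{1/(2\nu)}$'' does not arise from any consistent H\"older split targeting $L^1_\sigma$, and the weight bookkeeping you describe (consuming two factors of $t^\sigma$ when only one is present on the left) is not coherent.

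The fix is precisely what the paper does: instead of the endpoint dual pair $(\tilde q',\tilde r')=(1,2)$, use the admissible pair $(\tilde q,\tilde r)=(4,3)$ in Proposition~\ref{prop:TimeWeightedStrichartzEstimatesSchroedinger}, reducing to $\|\langle\nabla\rangle\bdyop(v_1,v_2 u)\|_{L^{4/3}_\sigma \dot B^0_{3/2,2}}$. Now the time exponents match: for~\eqref{eq:Cubiclinlin} one places the outer $\linwave$ in $L^4\dot B^{-1/2}_{4,2}$, the inner $\linwave$ in $L^\infty L^2$, and $u$ in $L^2_\sigma$, with $\tfrac14+0+\tfrac12=\tfrac34$ exactly. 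For~\eqref{eq:Cubicnlwave} one places $v$ in $L^\infty L^2$, $\nlwave$ in $L^4 L^2$, and $u$ in $L^2_\sigma$; the conversion $\|\nlwave\|_{L^4 L^2}\lesssim T^{\frac14-\sigma}\|\nlwave\|_{L^\infty_\sigma L^2}\lesssim T^{-\nu}\|\nlwave\|_{L^\infty_\sigma L^2}$ then produces the stated gain. The spatial exponents also have to be re-matched to the $L^{3/2}$ target (the paper uses $(p,q)=(2,6)$ and $(4,12/5)$ in the two cases, with Bernstein to close the dyadic sums), but once the time side is corrected this is routine.
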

	
	\begin{proof}
		Let $v_1, v_2 \in L^\infty L^2$. Applying Proposition~\ref{prop:TimeWeightedStrichartzEstimatesSchroedinger}, we find
		\begin{align*}
			\Big\|\int_{t}^\infty \eul^{\imu (t-s) \Delta} \bdyop(v_1,v_2 u) \dd s \Big\|_{\schrsp}
			\lesssim \| \langle \nabla \rangle \bdyop(v_1,v_2 u)\|_{L^{\frac{4}{3}}_\si \dot{B}^0_{\frac{3}{2},2}}.
		\end{align*}
		Assume that $v_1 = \nlwave$, $v_2 = v$ or $v_1 = v$, $v_2 = \nlwave$.
	Dyadic decomposition and Lemma~\ref{lem:BilinearMultiplierEstimate} then yield
		\begin{align*}
			&\| \langle \nabla \rangle \bdyop(v_1,v_2 u)\|_{L^{\frac{4}{3}}_\si \dot{B}^0_{\frac{3}{2},2}}
			\lesssim \Big\| \Big(\sum_{k \in \Z} \Big(\sum_{k_1 \leq k-5} 2^{-k} \|P_k v_1 \|_{L^2} \|P_{k_1} (v_2 u)\|_{L^6} \Big)^2 \Big)^{\frac{1}{2}} \Big\|_{L^{\frac{4}{3}}_\si} \\
			&\lesssim  \Big\| \Big(\sum_{k \in \Z} \|P_k v_1 \|_{L^2}^2 \Big(\sum_{k_1 \leq k-5} 2^{-k + k_1}  \|P_{k_1} (v_2 u)\|_{L^2} \Big)^2 \Big)^{\frac{1}{2}} \Big\|_{L^{\frac{4}{3}}_\si} \\
			&\lesssim \Big\| \|v_1\|_{L^2} \|v_2\|_{L^2} \|\langle \nabla \rangle u\|_{L^6} \Big\|_{L^{\frac{4}{3}}_\si}
			\lesssim \|v\|_{L^\infty L^2} \|\nlwave\|_{L^4 L^2} \|\langle \nabla \rangle u\|_{L^2_\si \dot{B}^0_{6,2}} \\
			&\lesssim T^{-\nu}  \|v\|_{L^\infty L^2} \|\nlwave\|_{L^\infty_\si L^2} \|\langle \nabla \rangle u\|_{L^2_\si \dot{B}^0_{6,2}},
		\end{align*}
		which shows~\eqref{eq:Cubicnlwave}. In the case $v_1 = v_2 = \linwave$ we again employ dyadic decomposition and Lemma~\ref{lem:BilinearMultiplierEstimate} to infer
		\begin{align*}
			&\| \langle \nabla \rangle \bdyop(\linwave,\linwave u)\|_{L^{\frac{4}{3}}_\si \dot{B}^0_{\frac{3}{2},2}}
			\lesssim \Big\| \Big(\sum_{k \in \Z} \Big(\sum_{k_1 \leq k-5} 2^{-k} \|P_k \linwave \|_{L^4} \|P_{k_1} (\linwave u)\|_{L^{\frac{12}{5}}} \Big)^2 \Big)^{\frac{1}{2}} \Big\|_{L^{\frac{4}{3}}_\si} \\
			&\lesssim \Big\| \Big(\sum_{k \in \Z} 2^{-k} \|P_k \linwave\|_{L^4}^2 \Big(\sum_{k_1 \leq k-5} 2^{-\frac{k}{2} + \frac{k_1}{2}}  \|P_{k_1} (\linwave u)\|_{L^{\frac{12}{7}}} \Big)^2 \Big)^{\frac{1}{2}} \Big\|_{L^{\frac{4}{3}}_\si} \\
			&\lesssim \Big\| \|\linwave\|_{\dot{B}^{-\frac{1}{2}}_{4,2}} \|\linwave\|_{L^2} \| u \|_{L^{12}} \Big\|_{L^{\frac{4}{3}}_\si} \lesssim \|\linwave\|_{L^4 \dot{B}^{-\frac{1}{2}}_{4,2}} \|\linwave \|_{L^\infty L^2} \|\langle \nabla \rangle u \|_{L^2_\si \dot{B}^0_{6,2}} \\
			&\lesssim \eta r  \|\langle \nabla \rangle u \|_{L^2_\si \dot{B}^0_{6,2}},
		\end{align*}
		implying~\eqref{eq:Cubiclinlin}.
	\end{proof}

 \section{Proof of the main results}
 \label{sec:MainResults}
 
 We are now able to prove the main results of this article. We start by a deterministic version for the unique solvability of the final-state problem. It follows from the estimates from Section~\ref{sec:MultilinearEstimates} and a standard fixed point argument.
 \begin{prop}
 	\label{prop:FinalStateDeterministic}
 	Let $0 < \nu \ll 1$, $\sigma = \frac{1}{2} - \nu$, and $r > 0$.
 	
 	Then there exists $\eta = \eta(r) > 0$ such that  for all $u_+ \in H^1(\R^3)$ and $v_+ \in L^2(\R^3)$ such that $\|u_+\|_{H^1} + \|v_+\|_{L^2} \leq r$ and $\linschr(t) = \eul^{\imu t \Delta} u_+$ and $\linwave(t) = \eul^{\imu \alpha t |\nabla|} v_+$ satisfy
 	\begin{align*}
 		\|\langle \nabla \rangle \linschr\|_{L^{\frac{2}{\nu}}_\sigma \dot{B}^0_{3,2} \cap L^2_\sigma \dot{B}^0_{6,2} \cap L^{\frac{2}{2-\nu}}_\sigma \dot{B}^0_{\frac{3}{\nu},2}(I_{T_0})} \, + \, 
 		\| \linwave \|_{L^2 \dot{B}^{-\frac{1}{4}-\ep}_{(q(-\ep),\frac{2}{\nu}),2} \cap \dot{B}^{-\frac{1}{2}+\frac{4}{3}\ep}_{q(\ep), q(\ep),2} \cap \dot{B}^{-\frac{1}{2}}_{4,4,2}(I_{T_0})} \leq \eta
 	\end{align*}
 	for a time $T_0 \geq 1$, there is a time $T = T(r) \geq T_0$ and a unique solution $(u,v) \in C(I_T, H^1(\R^3)) \cap C(I_T, L^2(\R^3))$ of~\eqref{eq:ZakharovSystemFirstOrder} with
 	\begin{align*}
 		\| u - \linschr \|_{\schrsp} + \| v - \linwave \|_{\wavesp} \leq r.
 	\end{align*}
 \end{prop}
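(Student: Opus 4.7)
The strategy is a standard contraction mapping argument applied to the integral formulation of the nonlinear parts. Writing $u = \linschr + \nlschr$ and $v = \linwave + \nlwave$ in~\eqref{eq:ZakharovSystemNormalForm} reduces matters to a fixed point equation $(\nlschr, \nlwave) = \Phi(\nlschr, \nlwave)$, where the components of $\Phi$ collect the boundary term $-\bdyop(v,u)$, the Duhamel integral of $(vu)_{LH+HH+\alpha L}$, the two cubic correction terms produced by the normal form, and the wave Duhamel integral of $|\nabla||u|^2$. I will run the fixed point on the closed ball
\[
B_T = \{(\nlschr, \nlwave) \in \schrsp \times \wavesp \colon \|\nlschr\|_{\schrsp} + \|\nlwave\|_{\wavesp} \leq r\}
\]
for a suitably large $T \geq T_0$, equipped with the natural metric.

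The self-mapping step is carried out by expanding every occurrence of $u$ and $v$ into linear plus nonlinear parts and invoking Lemmas~\ref{lem:EstimateBoundaryTerms}, \ref{lem:BilinearEstimateSchroedinger}, \ref{lem:BilinearEstimateWave}, \ref{lem:EstimateCubicTermsuuu}, and~\ref{lem:EstimateCubicTermsVVu} term by term. The hypothesis $\|u_+\|_{H^1} + \|v_+\|_{L^2} \leq r$ controls the non-small factors $\|\linschr\|_{L^\infty H^1}$ and $\|\linwave\|_{L^\infty L^2}$ by $r$, while the assumed bounds on $\linschr$ and $\linwave$ in the various mixed-norm spaces deliver the $\eta$-smallness. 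The resulting contributions fall into three classes: purely linear--linear terms bounded by $C\eta r$, mixed linear--nonlinear terms bounded by $C\eta \|(\nlschr,\nlwave)\|_{\schrsp\times\wavesp}$, and purely nonlinear terms carrying an extra gain $T^{-\nu}$ or $T^{-\sigma}$ from the time weights. On $B_T$ this yields
\[
\|\Phi(\nlschr,\nlwave)\|_{\schrsp\times\wavesp} \leq C\eta r + C\eta r + CT^{-\nu}(r^2 + r^3).
\]
Choosing first $\eta = \eta(r)$ small enough to dominate the first two terms by $r/3$ and then $T = T(r) \geq T_0$ large enough to absorb the last term, we obtain $\Phi(B_T) \subseteq B_T$.

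The contraction estimate proceeds analogously: every nonlinearity is multilinear, so a difference $(\nlschr^1,\nlwave^1) - (\nlschr^2,\nlwave^2)$ occupies exactly one slot while the remaining slots are filled either by free data of size $\leq r$ (paired with the $\eta$-gain) or by elements of $B_T$ (paired with the $T^{-\nu}$-gain). The same choice of $\eta$ and $T$ therefore produces contraction constant $\leq 1/2$, and Banach's theorem yields a unique fixed point in $B_T$. Continuity $(\nlschr, \nlwave) \in C(I_T, H^1) \times C(I_T, L^2)$ is read off the integral formulation via strong continuity of $\eul^{\imu t \Delta}$ and $\eul^{\imu\alpha t|\nabla|}$ together with dominated convergence in the Duhamel integrals, and setting $u = \linschr + \nlschr$, $v = \linwave + \nlwave$ gives the desired solution of~\eqref{eq:ZakharovSystemNormalForm}.

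The main obstacle, as anticipated in the introduction, is the quadratic term $\linwave\, \nlschr$ treated in Lemma~\ref{lem:BilinearEstimateSchroedinger}: it is the one bilinear contribution to $\Phi$ that carries no time-weight gain whatsoever, so its entire smallness must come from $\eta$. Concretely, the estimate $C\eta\,\|\langle\nabla\rangle\nlschr\|_{L^2_\sigma \dot{B}^{\frac14+\ep}_{(q(\ep),\frac{2}{1-\nu}),2}}$ forces $\eta$ to lie below a universal absolute constant (independent of $r$) in order to beat the coefficient of $\|\nlschr\|_{\schrsp}$ on the right-hand side; only after this choice is made may $T$ be enlarged to handle the $r$-dependent nonlinear--nonlinear terms. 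This is exactly the juncture where both randomizations must cooperate: the angular randomization of $v_+$ supplies the norm $L^2\dot{B}^{-\frac14-\ep}_{(q(-\ep),\frac{2}{\nu}),2}$ of $\linwave$ with the required $\eta$-smallness, while the physical-space randomization of $u_+$ legitimises placing $\nlschr$ in the spherically averaged Strichartz space with high angular integrability $\frac{2}{1-\nu}$.
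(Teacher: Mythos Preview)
Your proposal is correct and follows exactly the same route as the paper: write $(\nlschr,\nlwave)$ as a fixed point of the operator $\Phi$ defined by the right-hand side of~\eqref{eq:ZakharovSystemNormalForm}, invoke Lemmas~\ref{lem:EstimateBoundaryTerms}--\ref{lem:EstimateCubicTermsVVu} to show $\Phi$ is a contractive self-map on the ball of radius $r$ in $\schrsp\times\wavesp$ for $\eta(r)$ small and $T(r)$ large, and conclude by Banach's theorem. Your write-up is in fact more detailed than the paper's own proof, which simply asserts that the multilinear estimates yield the contraction; the only inessential remark is that the final paragraph on randomization is extraneous here since the proposition is purely deterministic.
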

 
 \begin{proof}
 	We set $\nlschr = u - \linschr$ and $\nlwave = v - \linwave$. The reformulation~\eqref{eq:ZakharovSystemNormalForm} of the Zakharov system then becomes 
 	\begin{equation}
		\label{eq:ZakharovSystemNormalFormnl}
		\begin{aligned}
			\nlschr(t) &=  - \bdyop(\linwave + \nlwave, \linschr + \nlschr)(t) \\
						&\qquad + \imu \int_t^\infty \eul^{\imu (t-s) \Delta} ((\linwave + \nlwave)(\linschr + \nlschr))_{LH+HH+\alpha L}(s) \dd s \\
			&\qquad - \imu \alpha \int_t^\infty \eul^{\imu (t-s) \Delta} \bdyop(|\nabla| |\linschr + \nlschr|^2,\linschr + \nlschr)(s) \dd s \\
			&\qquad + \imu \int_t^\infty \eul^{\imu (t-s) \Delta} \bdyop(\linwave + \nlwave, (\linwave + \nlwave)(\linschr + \nlschr))(s) \dd s, \\
			\nlwave(t) &=  - \imu \alpha \int_t^\infty \eul^{\imu \alpha (t-s) |\nabla|}(|\nabla| |\linschr + \nlschr|^2)(s) \dd s,
		\end{aligned}
	\end{equation}	
	i.e. a fixed point equation for $(\nlschr, \nlwave)$. The estimates from Section~\ref{sec:MultilinearEstimates}, i.e., Lemmas~\ref{lem:EstimateBoundaryTerms} to~\ref{lem:EstimateCubicTermsVVu}, imply that the fixed point operator $\Phi$, defined by the right-hand side of~\eqref{eq:ZakharovSystemNormalFormnl}, is a contractive self-mapping on 
	\begin{align*}
		B_{r} = \{(\nlschr, \nlwave) \in \schrsp(I_T \times \R^3) \times \wavesp(I_T \times \R^3) \colon \|\nlschr\|_{\schrsp} + \|\nlwave\|_{\wavesp} \leq r\}
	\end{align*}
	for $\eta = \eta(r)$ sufficiently small and $T = T(r) \geq 1$ sufficiently large. The assertion now follows from Banach's fixed point theorem.
 \end{proof}
 
 The main theorem now follows from the randomization improved estimates from Section~\ref{sec:LinearRandomizedEstimates}. They imply that the linear solutions with randomized final states satisfy the assumptions of the previous proposition.
 \vspace{1em}
 
 \emph{Proof of Theorem~\ref{thm:MainResult}.} Recall that for given final states $u_+ \in H^1(\R^3)$ and $v_+ \in L^2(\R^3)$ we denote by $u_+^\omega$ the physical-space randomization of $u_+$ and by $v_+^\omega$ the angular randomization of $v_+$ on a probability space $(\Omega,\cA,\PP)$. We set $\sigma = \frac{1}{2} - \nu$.
 
 By Lemma~\ref{lem:LargeDeviationEstimate} and Corollary~\ref{cor:EstimatePkpsill2kl2lLp}, Proposition~\ref{prop:RandomizationImprovedEstimatesSchroedinger}, and Proposition~\ref{prop:RandomizationImprovedEstimatesHalfWave}, we have
 \begin{align*}
 	&\| u_+^\omega \|_{L^\beta_\omega H^1} + \|\langle \nabla \rangle \eul^{\imu t \Delta} u_+^\omega \|_{L^\beta_\omega L^{\frac{2}{\nu}}_\sigma \dot{B}^0_{3,2} \cap L^\beta_\omega L^2_\sigma \dot{B}^0_{6,2} \cap L^\beta_\omega L^{\frac{2}{2-\nu}}_\sigma \dot{B}^0_{\frac{3}{\nu},2}(I_1)} \lesssim \sqrt{\beta} \|u_+\|_{H^1}, \\
 	&\| v_+^\omega \|_{L^\beta_\omega L^2} + \| \eul^{\imu \alpha t |\nabla|} v_+^\omega \|_{L^\beta_\omega L^2 \dot{B}^{-\frac{1}{4}-\ep}_{(q(-\ep),\frac{2}{\nu}),2} \cap L^\beta_\omega \dot{B}^{-\frac{1}{2}+\frac{4}{3}\ep}_{q(\ep), q(\ep),2} \cap L^\beta_\omega \dot{B}^{-\frac{1}{2}}_{4,4,2}(I_1)} \lesssim \sqrt{\beta} \|v_+\|_{L^2}
 \end{align*}
 for all $\beta \geq 2$. In particular, there is a measurable set $\tilde{\Omega} \subseteq \Omega$ with $\PP(\tilde{\Omega}) = 1$ and
 \begin{align*}
 	&\| u_+^\omega \|_{H^1} + \|\langle \nabla \rangle \eul^{\imu t \Delta} u_+^\omega \|_{L^{\frac{2}{\nu}}_\sigma \dot{B}^0_{3,2} \cap L^2_\sigma \dot{B}^0_{6,2} \cap L^{\frac{2}{2-\nu}}_\sigma \dot{B}^0_{\frac{3}{\nu},2}(I_1)} < \infty, \\
 	&\| v_+^\omega \|_{L^2} + \| \eul^{\imu \alpha t |\nabla|} v_+^\omega \|_{L^2 \dot{B}^{-\frac{1}{4}-\ep}_{(q(-\ep),\frac{2}{\nu}),2} \cap \dot{B}^{-\frac{1}{2}+\frac{4}{3}\ep}_{q(\ep), q(\ep),2} \cap \dot{B}^{-\frac{1}{2}}_{4,4,2}(I_1)} < \infty
 \end{align*}
 for all $\omega \in \tilde{\Omega}$. Fix $\omega \in \tilde{\Omega}$ in the following. Take $r > 0$ such that
 \begin{align*}
 	\|u_+^\omega\|_{H^1} + \|v_+^\omega\|_{L^2} \leq r
 \end{align*}
 and choose $\eta = \eta(r)$ from Proposition~\ref{prop:FinalStateDeterministic}. Using dominated convergence, we find a time $T \geq 1$ so that
 \begin{align}
 \label{eq:CheckingAssumptionForDeterministicFinalState}
 	&\|\langle \nabla \rangle \eul^{\imu t \Delta} u_+^\omega \|_{L^{\frac{2}{\nu}}_\sigma \dot{B}^0_{3,2} \cap L^2_\sigma \dot{B}^0_{6,2} \cap L^{\frac{2}{2-\nu}}_\sigma \dot{B}^0_{\frac{3}{\nu},2}(I_T)}  \nonumber\\
 	&\qquad +  
 	\| \eul^{\imu \alpha t |\nabla|} v_+^\omega \|_{L^2 \dot{B}^{-\frac{1}{4}-\ep}_{(q(-\ep),\frac{2}{\nu}),2} \cap \dot{B}^{-\frac{1}{2}+\frac{4}{3}\ep}_{q(\ep), q(\ep),2} \cap \dot{B}^{-\frac{1}{2}}_{4,4,2}(I_T)} \leq \eta.
 \end{align}
 Increasing $T$ if necessary, Proposition~\ref{prop:FinalStateDeterministic} now provides a unique solution $(u,v) \in C(I_T, H^1(\R^3)) \times C(I_T, L^2(\R^3))$ of~\eqref{eq:ZakharovSystemFirstOrder} such that
 \begin{align*}
 	\| u - \eul^{\imu t \Delta} u_+^\omega\|_{X_T^\sigma}  + \| v - \eul^{\imu \alpha t |\nabla|} v_+ \|_{Y_T^\sigma} \leq r.
 \end{align*}
 Now assume that there are two solutions $(u_1, v_1), (u_2, v_2) \in C(I_T, H^1(\R^3)) \times C(I_T, L^2(\R^3))$ of~\eqref{eq:ZakharovSystemFirstOrder} satisfying
	\begin{align*}
		\| u_i - \eul^{\imu t \Delta} u_+^\omega\|_{X_T^\sigma} < \infty \qquad \text{and} \qquad \|v_i - \eul^{\imu \alpha t |\nabla|} v_+^\omega\|_{Y_T^\sigma} < \infty
	\end{align*}	 
   for $i = 1,2$. Setting $\nlschr^i = u_i - \eul^{\imu t \Delta} u_+^\omega$ and $\nlwave^i = v_i - \eul^{\imu \alpha t |\nabla|} v_+^\omega$, we thus find $r' \geq r$ such that $\|\nlschr^i\|_{X_T^\sigma} + \|\nlwave^i\|_{Y_T^\sigma} \leq r'$. Take $\eta' = \eta(r')$ from Proposition~\ref{prop:FinalStateDeterministic}. Employing dominated convergence again, we get a time $T_1 \geq T$ such that~\eqref{eq:CheckingAssumptionForDeterministicFinalState} holds with $T$ and $\eta$ replaced by $T_1$ and $\eta'$. Proposition~\ref{prop:FinalStateDeterministic} thus provides a time $T_2 \geq T_1$ such that $(u_1, v_1) = (u_2, v_2)$ on $I_{T_2}$. Using the estimates from Section~\ref{sec:MultilinearEstimates} and dominated convergence, it is now easy to see that this equality extends to $I_T$. \hfill $\qed$
 
 \vspace{1em}
 
Corollary~\ref{cor:GlobalSolution} follows from Theorem~\ref{thm:MainResult}, conservation of energy and Schr{\"o}dinger mass, and variational estimates from~\cite{GNW2013}.

\vspace{1em}

\emph{Proof of Corollary~\ref{cor:GlobalSolution}:} Let $\tilde{\Omega}$ be the set with measure $1$ provided by Theorem~\ref{thm:MainResult} such that for every $\omega \in \tilde{\Omega}$ there is $T \geq 1$ such that the final-state problem has a unique solution $(u,v)$ on $[T,\infty)$ and~\eqref{eq:CheckingAssumptionForDeterministicFinalState} is satisfied. By the local wellposedness theory from Proposition~3.1 in~\cite{GTV1997} we can extend $(u,v)$ in $X^{1,\frac{1}{2}+\delta} \times X^{0,\frac{1}{2} + \delta}$ for sufficiently small $\delta > 0$ in negative time direction to a maximal solution. We denote the maximal interval of existence of the extended solution by $I$. 

Using~\eqref{eq:CheckingAssumptionForDeterministicFinalState}, Lemma~\ref{lem:SobolevSchroedingerWave}, and interpolation, we infer that $\lim_{t \rightarrow \infty} \|e^{\imu t \Delta} u_+^\omega\|_{L^4} = 0$ for every $\omega \in \tilde{\Omega}$.
Since $(u,v)$ scatters in the energy space with final state $(u_+^\omega, v_+^\omega)$, we thus get
\begin{align*}
	E_Z(u,v) &= \lim_{t \rightarrow \infty} E_Z(e^{\imu t \Delta} u_+^\omega, e^{\imu \alpha t |\nabla|} v_+^\omega) = \frac{1}{2}\|\nabla u_+^\omega\|_{L^2}^2 + \frac{1}{4} \|v_+^\omega\|_{L^2}^2, \\
	M(u) &= M(u_+^\omega) = \frac{1}{2} \|u_+^\omega\|_{L^2}^2,
\end{align*} 
where we also exploited the Sobolev embedding $H^1(\R^3) \hookrightarrow L^4(\R^3)$.
Consequently, the assumption implies $E_Z(u,v) M(u) < E_S(Q) M(Q)$.
Moreover, we have $\lim_{t \rightarrow \infty} K(u(t)) = \|\nabla u_+^\omega\|_{L^2}^2$, where $K$ is the functional
\begin{align*}
 		K(\varphi) = \int_{\R^3} |\nabla \varphi|^2 - \frac{3}{4} |\varphi|^4 \dd x
\end{align*}
from~\cite{GNW2013}. Lemma~2.2 and Corollary~2.3 from~\cite{GNW2013} now imply $I = \R$.

Concerning uniqueness, we first note that the argument from the proof of Theorem~\ref{thm:MainResult} extends to any interval $[T',\infty) \subseteq [1,\infty)$. This means that if there is a solution $(u,v)$ which exists and satisfies~\eqref{eq:UniquenessCondition} on $[T',\infty)$, any other solution $(\tilde{u}, \tilde{v})$ on $[T',\infty)$ satisfying~\eqref{eq:UniquenessCondition} has to coincide with $(u,v)$ on $[T',\infty)$. The unconditional uniqueness result from~\cite{MN2009} for the initial value problem yields the uniqueness of the extension from $[T',\infty)$, so that the uniqueness assertion follows. \hfill $\qed$

\begin{rem}
	\label{rem:MeasureGlobal}
	We observe that in the case of small Schr{\"o}dinger data, we can give a lower bound on the measure of the set of those $\omega$ for which the nonlinear solution scattering with final state $(u_+^\omega, v_+^\omega)$ is global. Let $\tilde{\Omega}$ be as in the proof of Corollary~\ref{cor:GlobalSolution}. Using the argument from~\cite{BC1996}, there is a number $\eta > 0$ such that for all $\omega \in \tilde{\Omega}$ with $\|u_+^\omega\|_{H^1} \leq \eta$ the solution $(u,v)$ provided by Theorem~\ref{thm:MainResult} is global.
	
	Lemma~\ref{lem:LargeDeviationEstimate} and Corollary~\ref{cor:EstimatePkpsill2kl2lLp} show
	\begin{align*}
		\|u_+^\omega\|_{L^\beta_\omega H^1_x} \lesssim \sqrt{\beta} \|u_+\|_{H^1}
	\end{align*}
	for all $\beta \geq 2$. Lemma~\ref{lem:MeasureFromLargeDeviation} thus shows that there are constants $C,c > 0$ such that
	\begin{align*}
		\PP(\omega \in \tilde{\Omega}\colon \|u_+^\omega\|_{H^1} \leq \eta ) \geq 1 - C \eul^{-c \eta^2 \|u_+\|_{H^1}^{-2}}.
	\end{align*}
\end{rem}

\appendix
\section{Proof of Lemma~\ref{lem:MismatchEstimates}}
\label{app:ProofOfMismatch}
 
For the sake of completeness, we provide the details of the proof of Lemma~\ref{lem:MismatchEstimates}. We use the same arguments as the authors in~\cite{B2019, DLM2019}, where similar estimates have appeared.

\emph{Proof of Lemma~\ref{lem:MismatchEstimates}.}	We start with the proof of~\eqref{eq:MismatchSpaceFrequencySpace}. The estimate is trivial if $|l - l'| \leq 7$. So take $l, l' \in \Z^3$ with $|l - l'| \geq 8$ and $f \in L^p(\R^3)$. For $x \in \supp \psi_l, y \in \supp \psi_{l'}$, we then have $|l - l'| \leq 2 |x - y|$ so that
 	\begin{align*}
  	&|\psi_l P_k (\psi_{l'} f)(x)| = \Big| \psi_l(x) \int_{\R^3} \check{\rho}_k(x-y) \psi_{l'}(y) f(y) \dd y \Big| \\
  	 &\lesssim_D  \int_{\R^3} |\psi_l(x)| 2^{3k} \langle l - l' \rangle^{-D} \langle 2^k(x-y)\rangle^D |\check{\rho}_0(2^k(x-y))| |\psi_{l'}(y) f(y)| \dd y,
  \end{align*}
  where the implicit constant is independent of $k$ as $k \geq 1$. Since $\check{\rho}_0$ is a Schwartz function, Young's inequality further yields
  \begin{align*}
  	\| \psi_l P_k (\psi_{l'} f)\|_{L^p} \lesssim_D  \langle l - l' \rangle^{-D} \| \langle \cdot \rangle^D \check{\rho}_0\|_{L^1} \|f\|_{L^p} \lesssim_D \langle l - l' \rangle^{-D} \|f\|_{L^p}.
  \end{align*}
  Replacing $\rho_k$ by $\varphi$ in the above reasoning, where $\varphi(\xi) = \eta_0(|\xi|)$ for all $\xi \in \R^3$, we also obtain the second estimate in~\eqref{eq:MismatchSpaceFrequencySpace}.
  
  Next fix $l \in \Z^3$ and take $j,k \in \N$ with $|j - k| > 4$. Without loss of generality we can assume that $k > j$. Using the support restrictions of $\rho_k$ and $\rho_j$, we see that
 	\begin{align*}
 		\cF  P_k (\psi_l P_j g)(\xi) 
 		&= \int_{\R^3} \rho_k(\xi) \eul^{-\imu l (\xi - \eta)} \hat{\psi}(\xi - \eta) \rho_j(\eta) \hat{g}(\eta) \dd \eta \\
 		&= \int_{\R^3} \rho_k(\xi) \eul^{-\imu l (\xi - \eta)} \rho_{\geq k - 3}(\xi - \eta) \hat{\psi}(\xi - \eta)\rho_j(\eta) \hat{g}(\eta) \dd \eta
 	\end{align*}
for all $\xi \in \R^3$ and any Schwartz function $g$, where we used hat $j \leq k - 5$. Hence,
\begin{align*}
	P_k (\psi_l P_j f) = P_k ((P_{\geq k-3} \psi_l ) P_j f).
\end{align*}
Exploiting that $\hat{\psi}$ is a Schwartz function, we thus get
 \begin{align*}
 	\|P_k (\psi_l P_j f)\|_{L^p} &\lesssim \|P_{\geq k-3} \psi_l \|_{L^\infty} \|P_j f\|_{L^p} \\
 	&\lesssim_D 2^{-2D k} \| \rho_{\geq k-3}(\xi) \eul^{- \imu l \xi} |\xi|^{2D} \hat{\psi}(\xi)\|_{L^1_\xi} \|f\|_{L^p} \lesssim_D 2^{-2D k} \|f\|_{L^p}, 
 \end{align*}
 where the implicit constant is independent of $l$. Replacing $P_j$ by $P_{\leq k - 5}$, the same arguments also yield~\eqref{eq:MismatchFrequencySpaceFrequency2}. \hfill $\qed$

\subsection*{Acknowledgment}
 
 I want to thank Sebastian Herr for invaluable advice on this project, instructive discussions, and helpful feedback.
 
 Moreover, financial support by the German Research Foundation (DFG) through the CRC
1283 “Taming uncertainty and profiting from randomness and low regularity in analysis, stochastics and
their applications” is acknowledged.

\end{document}